\def\blfootnote{\xdef\@thefnmark{}\@footnotetext}
\newtheorem{thm}{Theorem}[section]
\newtheorem{cor}[thm]{Corollary}
\newtheorem{lem}[thm]{Lemma}
\newtheorem{prop}[thm]{Proposition}
\newtheorem{ques}[thm]{Question}
\newtheorem{conj}[thm]{Conjecture}
\theoremstyle{definition}
\newtheorem{defn}[thm]{Definition}
\theoremstyle{remark}
\newtheorem{rem}[thm]{Remark}
\newtheorem{claim}[thm]{Claim}
\newfont{\eufm}{eufm10}
\newcommand{\e}{\varepsilon }
\renewcommand{\phi}{\varphi}
\renewcommand{\l}{{\ell}}
\newcommand{\Z}{\mathbb Z}
\newcommand{\R}{\mathbb R}
\newcommand{\diam}{\operatorname{diam}}
\newcommand{\Out}{\operatorname{Out}}
\newcommand{\Supp}{\operatorname{Supp}}
\newcommand{\Prob}{\operatorname{Prob}}
\newcommand{\MCG}{\operatorname{MCG}}
\def\Cay {\mathrm{Cay}}
\def\mc {\mathcal}
\begin{document}
\title{Random walks and quasi-convexity in acylindrically hyperbolic groups}
\author{C. Abbott, M. Hull}

\date{}
\maketitle

\begin{abstract}

It is known that every infinite index quasi-convex subgroup $H$ of a non-elementary hyperbolic group $G$ is a free factor in a larger quasi-convex subgroup of $G$. We give a probabilistic generalization of this result. That is, we show that when $R$ is a subgroup generated by independent random walks in $G$, then $\langle H, R\rangle\cong H\ast R$ with probability going to one as the lengths of the random walks go to infinity  and this subgroup is quasi-convex in $G$. Moreover, our results hold for a large class of groups acting on hyperbolic metric spaces and subgroups with quasi-convex orbits. In particular, when $G$ is the mapping class group of a surface and $H$ is a convex cocompact subgroup we show that $\langle H, R\rangle$ is convex cocompact and isomorphic to $ H\ast R$. 

\end{abstract}



\section{Introduction}

 In this paper we study random walks in groups acting on hyperbolic metric spaces and the interaction between  the elements produced by random walks  and fixed subgroups. We primarily focus on using random walks as a way to study the behavior of ``typical" elements and subgroups, both from an algebraic perspective and from the perspective of the dynamics of the action on the hyperbolic metric space. Our general philosophy is that if $H\leq G$ is a subgroup whose orbits are both quasi-convex and sufficiently small compared to the orbits of $G$, then the elements produced by random walks in $G$ will interact as freely as possible with $H$. 

For a non-elementary hyperbolic group $G$, the notion of elements of $G$ interacting freely with a subgroup $H$ goes back to a result stated by Gromov in \cite{Gro} which says that when $H$ is a quasi-convex subgroup of infinite index, then there exists $g\in G$ such that $\langle H, g\rangle\cong H\ast \langle g\rangle$ and this subgroup is quasi-convex. This was proved in the torsion-free case by Arzhantseva \cite{Arz}, and the general case follows by work of Minasyan  \cite{Minasyan3}; see also \cite{Cha}. Similar free product theorems for other classes of groups and subgroups can be found in \cite{AD, AntCum, ArzMin}.

We use random walks to give a probabilistic generalization of these existence theorems which holds for the much larger class of acylindrically hyperbolic groups $G$ and many subgroups $H$ with quasi-convex orbits for an appropriate action of $G$. Random walks in groups acting on hyperbolic metric spaces have recently been studied by a number of people \cite{CalMah, MS, MT, MaherTiozzo18, TaylorTiozzo}. In particular,  Maher--Tiozzo showed (among many other things) that if $G$ has a non-elementary action on a hyperbolic metric space $X$, then a random element of $G$ will act loxodromically on $X$ \cite{MT}. Talyor--Tiozzo prove that subgroups of $G$ generated by $k$ independent random walks are isomorphic to the free group $\mathbb F_k$ and quasi-isometrically embedded in $X$ under the orbit map \cite{TaylorTiozzo}. When the action is also acylindrical, Maher--Sisto proved that these subgroups are geometrically separated in $X$ \cite{MS} and hence hyperbolically embedded in $G$ in the sense of \cite{DGO}. In many situations, we are able to show that if $R$ is the subgroup generated by $k$ independent random walks, then the subgroup generated by a subgroup $H$ with quasi-convex orbits and $R$ will be isomorphic to the free product $H\ast R$ and hence isomorphic to $H\ast \mathbb F_k$ by \cite{TaylorTiozzo}. Moreover, this subgroup $\langle H, R\rangle$ will have quasi-convex orbits in $X$ and will be quasi-isometrically embedded in $X$ and/or geometrically separated in $X$ whenever $H$ is.

Before stating our results precisely, we introduce some terminology and notation. Let $\mu$ be a probability distribution on a group $G$. By a \emph{random walk of length $n$} with respect to $\mu$, we mean a random element $w(n)$ which is equal to the product
\[
w(n)=g_1\cdots g_n,
\]
where the $g_i$ are independent, identically-distributed elements of $G$ with distribution $\mu$. Given a property $P$ and a probability distribution $\mu$, we say that a \emph{random element $r$ of $G$ satisfies $P$} if $\Prob(w(n) \text{ satisfies $P$})\to 1$ as $n\to\infty$. For distributions $\mu_1,\dots,\mu_k$, we say that a \emph{random $k$-generated subgroup $R$ of $G$ satisfies $P$} if $\Prob(\langle w_{1}(n),\dots, w_{k}(n)\rangle \text{ satisfies $P$})\to 1$ as $n\to\infty$, where $w_{1}(n),\dots,w_{k}(n)$ are independent random walks of length $n$ with respect to $\mu_1,\dots,\mu_k$, respectively. Given a sequence $(\mu_i)=(\mu_1, \mu_2,\dots)$ of probability distributions, we say that a \emph{random subgroup $R$ of $G$ satisfies $P$} if the random $k$-generated subgroup with respect to $\mu_1,\dots,\mu_k$ satisfies $P$ for all $k\geq 1$. The canonical example is when $(\mu_i)$ is the constant sequence corresponding to the uniform measure on a finite symmetric generating set for $G$. While our main results are stated under more general assumptions on the sequence of probability distributions, all of our results will hold for this canonical example.

A group $G$ is \emph{acylindrically hyperbolic} if it admits a non-elementary action on a hyperbolic metric space which contains at least one WPD element; see Section \ref{sec:GA} for precise definitions. We call such an action of $G$ a \emph{partially WPD} action. This definition is equivalent to several other common definitions of acylindrical hyperbolicity by \cite{OsinAH}. 
When $G$ has a non-elementary, partially WPD action on a hyperbolic metric space $X$, we fix a basepoint $x_0\in X$ and consider the orbit map $\pi\colon G\to X$ given by $\pi(g)=gx_0$. This map allows us to project a random walk in $G$ to a random walk in $X$. We will need that our probability distributions take bounded steps with respect to this random walk on $X$ and that they ``see" a sufficient amount of the action of $G$ on $X$. We call such distributions \emph{permissible}; see Section \ref{sec:RW} for the precise definition. For now we note that any probability distribution $\mu$ whose support is a finite, symmetric generating set of $G$ will be permissible with respect to any non-elementary, partially WPD action of $G$ on a hyperbolic metric space.

In addition to the subgroup $H$ having quasi-convex orbits, it is also necessary that orbits of $H$ are ``not too large" in $X$; for example, $H$ cannot be a finite-index subgroup of $G$. For this, we require the existence of a loxodromic WPD element which is \emph{transverse to $H$}. This is a loxodromic WPD element whose quasi-axis in $X$ has uniformly bounded intersection with the orbit of any coset of $H$; see Definition \ref{def:transverse}.

Let $\Gamma_\mu$ denote the subgroup of $G$ generated by the support of $\mu$, and let $E(G)$ denote the maximal finite normal subgroup of $G$; this subgroup exists and is unique by \cite{DGO}. We are now ready to state our main theorem.

\begin{restatable}{thm}{main} \label{thm:main1}
Let $G$ be a group with a non-elementary, partially WPD action on a hyperbolic metric space $X$, and let $(\mu_i)$ be a sequence of permissible probability distributions on $G$. Let $H$ be a subgroup of $G$ such that $H\cap E(G)=\{1\}$. Suppose $H$ has quasi-convex orbits in $X$ and there exists a loxodromic WPD element $f\in\cap\Gamma_{\mu_i}$ transverse to $H$. Then a random subgroup $R$ will satisfy $\langle H, R\rangle\cong H\ast R$ and $\langle H, R\rangle$ has quasi-convex orbits in $X$. Moreover, if $H$ is quasi-isometrically embedded in $X$, then $\langle H, R\rangle$ is quasi-isometrically embedded in $X$.
\end{restatable}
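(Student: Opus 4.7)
The strategy is a probabilistic ping-pong argument in $X$. Each long random walk $w_i(n)$ should, with high probability, behave like a very long power of $f$: loxodromic WPD, with a quasi-axis that is transverse to $H$ and essentially disjoint from the quasi-axes of the other $w_j(n)$. Once such a configuration is arranged, a standard ping-pong setup in $X$ yields $\langle H, R\rangle\cong H\ast R$ together with good geometric control of the orbits of the resulting free product.

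First I would study the geometry of a single random walk $w_i(n)$. By Maher--Tiozzo, $w_i(n)$ is loxodromic WPD with probability going to $1$, so it admits a quasi-axis $\ell_i$ of translation length growing linearly in $n$. Since $f\in\Gamma_{\mu_i}$ and the distributions $\mu_i$ are permissible, the random walk moves linearly in $X$ and tracks a well-defined direction in the Gromov boundary. The crucial technical point is that $\ell_i$ should inherit transversality to $H$ from $f$: the quasi-axis of $w_i(n)$ must have uniformly bounded intersection with any coset orbit $gHx_0$. I would prove this via a deviation estimate for the projection of the walk to $H$-orbits, in the spirit of Maher--Sisto, combined with the observation that transversality for $f$ passes to random walk elements that track nearby boundary directions. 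This is the main obstacle: transferring the hypothesized transversality of $f$ to typical random walk elements in a quantitatively uniform way.

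Next I would set up the ping-pong. Using the WPD property, the quasi-convexity of $Hx_0$, and the transversality established above, I can select on each $\ell_i$ a fundamental domain for $\langle w_i(n)\rangle$ that is far from the orbit $Hx_0$ and from the quasi-axes $\ell_j$ for $j\neq i$. From these I produce disjoint attracting/repelling neighborhoods $U_i^{\pm}\subset X$ for $\langle w_i(n)\rangle$, with a complementary region containing an $H$-invariant neighborhood of $Hx_0$. The hypothesis $H\cap E(G)=\{1\}$ enters here to rule out exceptional fixed-point behavior on the boundary. Classical ping-pong then yields $\langle H, R\rangle\cong H\ast R$ and forces every reduced word in $H\ast R$ to trace out a concatenation of uniform-quality quasi-geodesic segments in $X$.

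Finally, for quasi-convexity of orbits and the quasi-isometric embedding statement, I would use the Bass--Serre tree $T$ of $H\ast R$. The ping-pong data produces a coarsely Lipschitz, $\langle H, R\rangle$-equivariant map $T\to X$ sending vertices stabilized by conjugates of $H$ to the corresponding $H$-orbits and those stabilized by conjugates of $R$ to translates of the quasi-axes of the $w_i(n)$. Each piece is uniformly quasi-convex in $X$, and the ping-pong separation makes their concatenations uniform quasi-geodesics; hence any geodesic in $X$ between two points of $\langle H, R\rangle x_0$ stays uniformly close to this orbit. If $H$ is additionally quasi-isometrically embedded, the same decomposition upgrades to a quasi-isometric embedding of $\langle H, R\rangle$ into $X$.
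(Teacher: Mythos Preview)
Your broad outline---show that reduced words in $H\ast R$ label uniform quasi-geodesics in $X$---is exactly the paper's strategy, but the key step (transferring transversality from $f$ to the random elements) is misdiagnosed, and the ping-pong framing with attracting/repelling neighbourhoods $U_i^{\pm}$ is not the right scaffolding.

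The random walk does \emph{not} ``track nearby boundary directions'' to $f^{\pm\infty}$; its limit point is almost surely a different boundary point, so transversality cannot be inherited that way. The paper's mechanism is different and more local: by a matching estimate (Lemma~\ref{lem:matchwithf}), every subsegment of $\gamma_i=[x_0,w_ix_0]$ (or of the axis of $w_i$) of length $\varepsilon Dn$ contains an $(L,K)$--match with some $G$--translate of $\alpha_f$. Since $f$ is transverse to $H$, no such translate of $\alpha_f$ can lie in a fixed neighbourhood of any $g\pi(H)$ for long, and this forces $\gamma_i$ to have no subpath of length $\varepsilon Dn$ in a $K$--neighbourhood of any $g\pi(H)$ (Lemma~\ref{matchwithH}). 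This is the content you call ``the main obstacle,'' and it is solved by this match-with-$\alpha_f$ argument, not by boundary tracking or by projecting the walk to $H$--orbits.

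With that lemma in hand the paper avoids ping-pong sets entirely. It works with the geodesics $\gamma_i$ rather than axes, records three matching estimates---no long self-match of $\gamma_i$, no long match between $\gamma_i$ and $\gamma_j$, and no long match of $\gamma_i$ with translates of $\pi(H)$---and then bounds the Gromov products at the concatenation points of a path labelled by a reduced word in $H\cup\{w_1^{\pm1},\dots,w_k^{\pm1}\}$. A broken-quasi-geodesic lemma (Lemma~\ref{lem:brokenqgeod}) then gives that every such path is an $(8,c)$--quasi-geodesic (Theorem~\ref{t:words}). This simultaneously yields the free product, quasi-convexity of orbits, and the quasi-isometric embedding statement, with no need for Bass--Serre trees or $U_i^{\pm}$. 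Finally, the hypothesis $H\cap E(G)=\{1\}$ is not about boundary fixed points: the matching estimates require $E(\mu_i)=E(G)$, and the paper handles nontrivial $E(G)$ by passing to $G/E(G)$, where $H\cap E(G)=\{1\}$ is exactly what is needed to recover $H\ast R$ from the amalgamated product $HE(G)\ast_{E(G)}RE(G)$.
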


The assumption that each $H\cap E(G)=\{1\}$ is necessary to conclude that $\langle H, R\rangle\cong H\ast R$. However it can be be removed at the cost of replacing this free product with an amalgamated product over a finite subgroup; see Remark \ref{finnormsub}. \\

\noindent{\bf Applications to elliptic subgroups.} If $G$ has a non-elementary, acylindrical action on a hyperbolic metric space with $E(G)=\{1\}$ and $H$ is an elliptic subgroup (i.e., $H$ has bounded orbits), then the first author and Dahmani show that there exists $g\in G$ such that $\langle H, g\rangle\cong H\ast \langle g\rangle$ \cite{AD}. When $H$ is elliptic, any loxodromic element of $G$ will be transverse to $H$, so Theorem \ref{thm:main1} implies the following probabilistic generalization of this result.

\begin{cor}\label{cor:mainell}
Let $G$ be a group with a non-elementary, partially WPD action on a hyperbolic metric space $X$, and let $(\mu_i)$ be a sequence of permissible probability distributions on $G$ such that there exists a loxodromic WPD element $f\in\cap\Gamma_{\mu_i}$. Let $H$ be an elliptic subgroup of $G$ such that $H\cap E(G)=\{1\}$. Then a random subgroup $R$ will satisfy $\langle H, R\rangle\cong H\ast R$. 
\end{cor}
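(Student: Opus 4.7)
The plan is to derive Corollary \ref{cor:mainell} as a direct consequence of Theorem \ref{thm:main1}. What must be checked is that the two nontrivial hypotheses of that theorem (beyond those assumed in the corollary itself) hold automatically when $H$ is elliptic: namely that $H$ has quasi-convex orbits in $X$, and that the loxodromic WPD element $f \in \cap\, \Gamma_{\mu_i}$ supplied by hypothesis is transverse to $H$ in the sense of Definition \ref{def:transverse}.

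The quasi-convexity of orbits is immediate from the definition of elliptic: $Hx_0$ is bounded in $X$, and any bounded subset of a metric space is trivially quasi-convex. For transversality, I would set $D := \diam(Hx_0)$ and observe that for any coset $gH$, the orbit $gHx_0 = g \cdot Hx_0$ has diameter at most $D$ since $G$ acts by isometries on $X$. A quasi-axis of the loxodromic element $f$ is a (bi-infinite) quasi-geodesic in $X$, and the intersection of any quasi-geodesic with a set of diameter at most $D$ is bounded by a constant depending only on $D$ and the quasi-geodesic constants of the axis. Since this bound is uniform over all cosets $gH$, this is precisely the transversality condition required.

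With both hypotheses verified, applying Theorem \ref{thm:main1} to $H$ and the sequence $(\mu_i)$ yields that a random subgroup $R$ satisfies $\langle H, R\rangle \cong H \ast R$, which is the conclusion of the corollary. There is no serious obstacle here; the argument is entirely a matter of unpacking definitions and observing that boundedness of orbits makes both the quasi-convexity and transversality conditions trivial to verify.
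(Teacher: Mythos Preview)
Your proposal is correct and follows the same approach as the paper, which simply notes that when $H$ is elliptic any loxodromic element is transverse to $H$ and then invokes Theorem~\ref{thm:main1}. One minor remark: you do not need to appeal to the quasi-geodesic constants of $\alpha_f$ for transversality, since $\mathcal{N}_K(gHx_0)$ already has diameter at most $D+2K$ and any subset of it (in particular its intersection with $\alpha_f$) inherits this bound.
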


There are already a number of applications in this setting; we list some of them below. We say that $\mu$ has \emph{full support} if $\Gamma_\mu=G$. A surface $S$ of genus $g$ with $p$ punctures is called \emph{exceptional} if $3g+p\leq 4$; otherwise $S$ is \emph{non-exceptional}.  See Section \ref{sec:App} for other definitions and notation used in this corollary.

\begin{cor} \label{cor:elliptic}
Suppose that $G$ and $H$ are one of the following.
\begin{enumerate}[(1)]
\item $G$ is a non-elementary hyperbolic group and $H$ is a finite subgroup with $H\cap E(G)=\{1\}$.
\item $G$ is a non-elementary relatively hyperbolic group and $H$ is finite or conjugate into a peripheral subgroup with $H\cap E(G)=\{1\}$.
\item $G$ is the mapping class group of a non-exceptional surface $S$ and $H$ is a subgroup which contains no pseudo-Anosov elements and $H\cap Z(G)=\{1\}$.
\item $G=Out(\mathbb F_n)$ for $n\geq 3$ and $H$ is a subgroup which virtually fixes a free factor of $\mathbb F_n$ up to conjugacy.
\item $G$ is a directly indecomposable right-angled Artin group $A(\Gamma)$ and $H$ is conjugate to a subgroup of $G$ whose support is contained in a subjoin of $\Gamma$.
\item $G=A/Z(A)$ where $A\neq A_1, A_2, I_{2m}$ is an irreducible Artin-Tits group of spherical type and $H$ is a parabolic subgroup.
\item $G=\pi_1(M)$ and $H\leq\pi_1(N)$ where $M$ is a closed, orientable, irreducible, non-geometric 3--manifold and $N$ is a JSJ--component of $M$.
\end{enumerate}
If $(\mu_i)$ is a sequence of finitely supported probability distributions with full support on $G$, then a random subgroup $R$ will satisfy $\langle H, R\rangle\cong H\ast R$.
\end{cor}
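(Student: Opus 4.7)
The plan is to apply Corollary \ref{cor:mainell} in each of the seven cases. Two of its hypotheses simplify under our assumptions on $(\mu_i)$: full support of each $\mu_i$ forces $\Gamma_{\mu_i}=G$ and hence $\bigcap_i \Gamma_{\mu_i}=G$, so the requirement that some loxodromic WPD element $f$ lie in $\bigcap_i \Gamma_{\mu_i}$ is equivalent to the mere existence of a loxodromic WPD element in $G$. Similarly, finite support gives the bounded-step condition and full support gives the seeing-enough-of-the-action condition from Section \ref{sec:RW}, so $(\mu_i)$ is permissible for \emph{any} non-elementary, partially WPD action of $G$. The task therefore reduces, case by case, to exhibiting a non-elementary, partially WPD (in practice, acylindrical) action of $G$ on a hyperbolic space $X$ in which $H$ acts elliptically; the remaining hypothesis $H \cap E(G)=\{1\}$ is stipulated in each case (and in case (3) follows from $H \cap Z(G)=\{1\}$, since $Z(G)=E(G)$ for the mapping class group of a non-exceptional surface).

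Cases (1) and (2) use the well-known acylindrical actions on the Cayley graph and the relative (coned-off) Cayley graph respectively; finite subgroups, and also conjugates of peripheral subgroups, are elliptic by construction. Case (3) uses Bowditch's acylindrical action of $\MCG(S)$ on the curve complex, in which a subgroup is elliptic if and only if it contains no pseudo-Anosov element. Case (4) uses the acylindrical action of $\Out(\mathbb F_n)$ on the free factor complex (Bestvina--Mann, Handel--Mosher); any subgroup virtually fixing a free factor has bounded orbits there. Case (5) uses an acylindrical action of the directly indecomposable RAAG $A(\Gamma)$ on a suitable hyperbolic space (for instance the Kim--Koberda extension graph, or the largest acylindrical action produced by Abbott--Behrstock--Durham), on which subjoin subgroups act elliptically. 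Case (6) uses the Calvez--Wiest acylindrical action of $A/Z(A)$ on a hyperbolic space in which parabolic subgroups are elliptic. Case (7) uses the action of $\pi_1(M)$ on the Bass--Serre tree of the JSJ decomposition, which is a hyperbolic space on which the action is acylindrical and the $\pi_1$ of each JSJ component is a vertex stabilizer, hence elliptic.

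The main obstacle is essentially bookkeeping and citation: for each item one must identify the correct acylindrical action in the literature, verify that it is non-elementary and partially WPD (or better), and confirm that the specified class of subgroups acts with bounded orbits. Once these facts are assembled, Corollary \ref{cor:mainell} yields $\langle H, R\rangle \cong H \ast R$ for a random subgroup $R$ in every case. The most delicate verifications are probably (5), where one must pick an action whose elliptic subgroups include all subjoin subgroups of the given graph, and (7), where one should check carefully that the JSJ Bass--Serre tree is acylindrical in the sense used here (which is standard for non-geometric 3-manifolds but requires a small argument).
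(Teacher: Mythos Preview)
Your proposal is correct and follows essentially the same approach as the paper: in each case one exhibits a non-elementary, partially WPD action on a hyperbolic space in which $H$ is elliptic, then invokes Corollary~\ref{cor:mainell}. Two small corrections: in cases (4) and (6) the actions on the free factor complex and on $\mc{C}_{AL}(A)$ are only known to be WPD and partially WPD respectively (not acylindrical), but this is exactly what Corollary~\ref{cor:mainell} requires; and in cases (4)--(7) the hypothesis $H\cap E(G)=\{1\}$ is not stipulated because $E(G)=\{1\}$ for those groups.
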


We note that the group $G$ in each of the above examples can be replaced with any subgroup whose induced action on the associated hyperbolic space is non-elementary. For example, one could take $G$ to be the Torelli group instead of the whole mapping class group.

When $G$ is acylindrically hyperbolic, then one can always find a non-elementary, partially WPD action of $G$ in which any given finite collection of elements of $G$ will all be elliptic (see, for example, \cite[Lemma 3.18]{H16}). Hence we get the following corollary which shows that random elements of $G$ can be used to show that $G$ satisfies property $P_{naive}$; see \cite{AD}.

\begin{cor}
Let $G$ be acylindrically hyperbolic with $E(G)=\{1\}$, $\mu$ a finitely supported probability distribution on $G$ of full support, and $g_1,\dots, g_m$ fixed elements of $G$. Then a random element $r$ of $G$ will satisfy  $\langle g_i, r\rangle\cong \langle g_i\rangle \ast \langle r \rangle$ for all $1\leq i\leq m$.
\end{cor}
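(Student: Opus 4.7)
The plan is to reduce the statement to Corollary~\ref{cor:mainell} applied to the cyclic subgroups $H_i=\langle g_i\rangle$, after first choosing an action of $G$ on a hyperbolic space in which every $g_i$ is elliptic. The existence of such an action is exactly what is cited just before the corollary: for any finite collection of elements of an acylindrically hyperbolic group $G$, \cite[Lemma~3.18]{H16} produces a non-elementary, partially WPD (in fact acylindrical) action of $G$ on a hyperbolic space $X$ in which $g_1,\dots,g_m$ all have bounded orbits. Fix such an action, and a basepoint $x_0\in X$.

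Next I would verify the hypotheses of Corollary~\ref{cor:mainell} for each $H_i=\langle g_i\rangle$ and for the constant sequence $\mu_i=\mu$. Since $\mu$ has full support, $\Gamma_\mu=G$, so $\Gamma_\mu$ contains some loxodromic WPD element $f$ for the chosen action; and since $\mu$ is finitely supported with full support, it is permissible with respect to this action (this is the standard example of a permissible distribution, as noted in the text before Theorem~\ref{thm:main1}). Each $H_i$ is elliptic in $X$ by construction, and the assumption $E(G)=\{1\}$ forces $H_i\cap E(G)=\{1\}$ trivially. Therefore Corollary~\ref{cor:mainell} applies to each $H_i$: a random element $r\in G$ satisfies $\langle H_i,\langle r\rangle\rangle\cong H_i\ast \langle r\rangle$, i.e.
\[
\Prob\bigl(\langle g_i,w(n)\rangle\cong \langle g_i\rangle\ast\langle w(n)\rangle\bigr)\longrightarrow 1\quad\text{as }n\to\infty.
\]

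Finally, I would conclude by a union bound. Let $A_i(n)$ be the event that $\langle g_i,w(n)\rangle\cong \langle g_i\rangle\ast\langle w(n)\rangle$. The previous step gives $\Prob(A_i(n))\to 1$ for each fixed $i\in\{1,\dots,m\}$, and since $m$ is finite,
\[
\Prob\!\left(\bigcap_{i=1}^m A_i(n)\right)\ge 1-\sum_{i=1}^m\bigl(1-\Prob(A_i(n))\bigr)\longrightarrow 1,
\]
which is precisely the conclusion that a random $r$ simultaneously satisfies $\langle g_i,r\rangle\cong \langle g_i\rangle\ast\langle r\rangle$ for all $i$.

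The main subtlety is the first step: one needs an action of $G$ that is simultaneously non-elementary, partially WPD, and makes every $g_i$ elliptic, so that a single random walk can be analyzed with respect to one fixed action and permissible distribution. This is not automatic from any given action (in the defining action, some $g_i$ may well be loxodromic), but it is exactly what the cited construction in \cite{H16} provides, so no additional work is required beyond invoking it.
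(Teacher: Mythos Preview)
Your proof is correct and follows essentially the same approach as the paper: the paper explicitly notes just before the corollary that one can use \cite[Lemma~3.18]{H16} to find a non-elementary, partially WPD action in which each $g_i$ is elliptic, and then deduces the statement from Corollary~\ref{cor:mainell}. The only thing you make explicit that the paper leaves implicit is the union bound over the finitely many indices $i$, which is trivial.
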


\noindent{\bf Applications to quasi-convex and quasi-isometrically embedded subgroups.}
When the orbits of $H$ are unbounded, it is necessary to find the transverse element $f$ in order to apply Theorem \ref{thm:main1}. If $G$ is hyperbolic, then $f$ is transverse to $H$ if and only if no non-trivial power of $f$ is conjugate into $H$. The existence of such an element is provided by \cite[Proposition 1]{Minasyan2}.  Thus we can prove that Theorem \ref{thm:main1} holds when $G$ is a non-elementary hyperbolic group and $H\leq G$ is an infinite index quasi-convex subgroup. This is a probabilistic generalization of \cite[Theorem 1]{Arz}.

\begin{thm}\label{thm:hypqc}
Let $G$ be a non-elementary hyperbolic group and let $(\mu_i)$ be a sequence of permissible probability distributions on $G$. Let $H$ be an infinite index quasi-convex subgroup of $G$ such that $H\cap E(G)=\{1\}$. Then a random subgroup $R$ will satisfy $\langle H, R\rangle\cong H\ast R$ and $\langle H, R\rangle$ will be an infinite index quasi-convex subgroup of $G$. 
\end{thm}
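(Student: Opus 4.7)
The strategy is to deduce Theorem~\ref{thm:hypqc} from Theorem~\ref{thm:main1} by specializing the hyperbolic metric space $X$ to a Cayley graph $\Cay(G,S)$ of $G$ for some finite generating set $S$. Since $G$ is non-elementary hyperbolic, $X$ is hyperbolic and the left-multiplication action of $G$ on $X$ is proper, cocompact, non-elementary, and partially WPD (every infinite-order element is loxodromic WPD for a proper cobounded action). Under the orbit map $g\mapsto g\cdot 1$ the orbit of $H$ is $H$ itself, so $H$ has quasi-convex orbits in $X$ iff $H$ is quasi-convex in $G$, and the same property automatically makes $H$ quasi-isometrically embedded in $X$. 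Thus all hypotheses of Theorem~\ref{thm:main1} are in place except the existence of a suitable transverse element.

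\textbf{Transverse element.} To produce a loxodromic WPD element $f\in\bigcap_i\Gamma_{\mu_i}$ transverse to $H$, I would first apply \cite[Proposition~1]{Minasyan2}, which for the infinite-index quasi-convex subgroup $H$ yields a loxodromic $f_0\in G$ no non-trivial power of which is conjugate into $H$; quasi-convexity of $H$ and a standard Morse-lemma argument then upgrade this to transversality in the sense of Definition~\ref{def:transverse}, since otherwise the axis of $f_0$ would fellow-travel some translate of $H$ arbitrarily long and force a power of $f_0$ to be conjugate into $H$. To place the transverse element inside $\bigcap_i\Gamma_{\mu_i}$, I would use that permissibility forces each $\Gamma_{\mu_i}$ to act non-elementarily on $X$ and hence to contain an abundance of loxodromic WPD elements, and then run a ping-pong/Baumslag-style combination argument (cf.\ \cite{Minasyan2, Minasyan3}) to replace $f_0$ by a product of a high power of itself with elements of $\bigcap_i\Gamma_{\mu_i}$ while preserving loxodromicity and transversality. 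Theorem~\ref{thm:main1} then delivers $\langle H,R\rangle\cong H\ast R$ with quasi-convex orbits, and by the moreover clause the subgroup is quasi-isometrically embedded in $X$, i.e.\ quasi-convex in $G$.

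\textbf{Infinite index --- the main obstacle.} The remaining assertion, that $\langle H,R\rangle$ has infinite index, is where I expect the main work: a proper quasi-convex subgroup of a hyperbolic group need not be of infinite index, so more than the conclusions of Theorem~\ref{thm:main1} is needed. My preferred route is via critical exponents. In a non-elementary hyperbolic group, a quasi-convex subgroup has critical exponent equal to $\delta_G$ iff it has finite index; since $H$ is infinite-index quasi-convex we have $\delta_H<\delta_G$. For the random subgroup $R$, the orbit map $R\hookrightarrow G$ has multiplicative distortion comparable to $\lambda n$, where $\lambda>0$ is the drift of the random walk, so the critical exponent of $R$ viewed as a subgroup of $G$ satisfies $\delta_R=O(1/n)$ and in particular $\delta_R<\delta_G$ for $n$ large. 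A standard combination estimate for the critical exponent of a quasi-convex free product (of the form $e^{-\delta_{H\ast R}}\geq e^{-\delta_H}+e^{-\delta_R}$) then gives $\delta_{H\ast R}<\delta_G$, so $\langle H,R\rangle$ has infinite index in $G$ with probability tending to $1$. A backup approach, avoiding growth estimates, is to argue at the level of the Gromov boundary: $\Lambda(H)$ and $\Lambda(R)$ are closed nowhere-dense subsets of $\partial G$, and one can find (generically over the random walk) a loxodromic element of $G$ whose axis endpoints avoid $\Lambda(\langle H,R\rangle)$, forcing $\Lambda(\langle H,R\rangle)\subsetneq\partial G$ and hence infinite index.
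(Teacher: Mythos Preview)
Your overall plan---take $X=\Cay(G,S)$, note that quasi-convex subgroups are exactly those with quasi-convex (equivalently, quasi-isometrically embedded) orbits, produce a loxodromic $f$ with $\langle f\rangle\cap H^g=\{1\}$ for all $g$ via \cite[Proposition~1]{Minasyan2}, observe this gives transversality, and invoke Theorem~\ref{thm:main1}---is exactly the paper's route (see Theorem~\ref{thm:hypgps}). Two places where you diverge deserve comment.

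\textbf{The element $f\in\cap_i\Gamma_{\mu_i}$.} You propose a separate ping-pong construction to push $f_0$ into $\cap_i\Gamma_{\mu_i}$. This is unnecessary: the paper records (just after the statement of Theorem~\ref{thm:stablesubgroup}) that Minasyan's argument already lets one choose $f$ inside any subgroup $K$ containing a loxodromic WPD element and satisfying $|K:K\cap H^g|=\infty$ for all $g$. In the hyperbolic setting one simply runs the argument with $K=\cap_i\Gamma_{\mu_i}$; no extra combination step is needed.

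\textbf{Infinite index.} You are right that Theorem~\ref{thm:main1} alone does not say ``infinite index'', and the paper's proof of Theorem~\ref{thm:hypgps} is terse on this point. But your critical-exponent route has a genuine gap: the ``standard combination estimate'' $e^{-\delta_{H\ast R}}\ge e^{-\delta_H}+e^{-\delta_R}$ is not a known inequality for quasi-convex free products inside a hyperbolic group---the exponent of $\langle H,R\rangle$ depends on the geometry of the embedding, not just on $\delta_H$ and $\delta_R$, so this step does not go through as written. A far simpler argument is available and is implicit in the paper's framework: append one more distribution $\mu_{k+1}$ and apply Theorem~\ref{thm:main1} (or Theorem~\ref{t:words}) to the enlarged family. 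With probability tending to one the $(k{+}1)$-tuple satisfies $\langle H,w_1,\dots,w_{k+1}\rangle\cong H\ast\langle w_1\rangle\ast\cdots\ast\langle w_{k+1}\rangle$, so no non-trivial power of $w_{k+1}$ lies in $\langle H,R\rangle$. Since any finite-index subgroup of $G$ contains a fixed power of every element, this forces $[G:\langle H,R\rangle]=\infty$. Your backup boundary argument is also viable, but the above is both shorter and already sits inside the machinery you are invoking.
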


We are also able to show the analogue of this result in the relatively hyperbolic setting (see Theorem \ref{thm:relhypgps}), with the notion of \emph{relatively quasi-convex} subgroups playing the role of quasi-convex subgroups; see Section \ref{sec:relhyp} for definitions. We point out one interesting application resulting from combining Theorem \ref{thm:relhypgps} with \cite[Corollary 1.3]{Hruska}.
\begin{cor}\label{cor:relqcx}
Let $G$ be a geometrically finite Kleinian group and let $H$ be a geometrically finite subgroup of $G$. Let $(\mu_i)$ be a sequence of finitely supported probability distributions of full support on $G$. Then a random subgroup $R$ satisfies $\langle H, R\rangle\cong H\ast R$ and $\langle H, R\rangle$ is geometrically finite.
\end{cor}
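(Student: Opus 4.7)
The plan is to reduce Corollary \ref{cor:relqcx} to the relatively hyperbolic analogue Theorem \ref{thm:relhypgps}, using Hruska's characterization as the bridge. A geometrically finite Kleinian group $G$ is hyperbolic relative to representatives of the conjugacy classes of its maximal parabolic subgroups, by work of Farb and Bowditch, and \cite[Corollary 1.3]{Hruska} says that a subgroup of $G$ is geometrically finite if and only if it is relatively quasi-convex with respect to this peripheral structure. So $H$ is relatively quasi-convex in $G$; once Theorem \ref{thm:relhypgps} gives $\langle H,R\rangle\cong H\ast R$ with $\langle H,R\rangle$ relatively quasi-convex, a second application of Hruska upgrades this to geometric finiteness, completing the proof.

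I would then verify the remaining hypotheses of Theorem \ref{thm:relhypgps}. Each $\mu_i$ is finitely supported with full support, so $\Gamma_{\mu_i}=G$ for every $i$; hence permissibility with respect to the $G$-action on the hyperbolic space associated to the relatively hyperbolic structure is automatic, and the intersection $\cap\Gamma_{\mu_i}$ is $G$ itself. The condition $H\cap E(G)=\{1\}$ is automatic because $E(G)$ is trivial for any non-elementary Kleinian group: elements of the maximal finite normal subgroup must fix the limit set of $G$ pointwise, which for a non-elementary discrete subgroup of $\Isom(\mathbb{H}^3)$ forces them to be trivial. The statement of Corollary \ref{cor:relqcx} implicitly assumes $H$ has infinite index in $G$, since otherwise the free product conclusion fails whenever the random walk $R$ lands in $H$, which happens with positive probability.

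The main obstacle is producing a loxodromic WPD element of $G$ that is transverse to $H$ in the sense of Definition \ref{def:transverse}; this plays the role for Corollary \ref{cor:relqcx} that \cite[Proposition 1]{Minasyan2} plays in the proof of Theorem \ref{thm:hypqc}. Since $H$ is relatively quasi-convex of infinite index, its limit set $\Lambda H$ is a proper closed subset of the Bowditch boundary $\Lambda G$, while the pairs of fixed points of loxodromic WPD elements of $G$ are dense in $\Lambda G\times \Lambda G$. I would pick such an element $f$ whose fixed-point pair lies outside every $G$-translate of $\Lambda H$ and away from the parabolic fixed points, and then apply a standard north--south / ping-pong dynamics argument on the Bowditch boundary to conclude that sufficiently large powers of $f$ have quasi-axes intersecting each orbit $gH\cdot x_0$ in a set of uniformly bounded diameter. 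I expect this step to be part of the general theory of relatively quasi-convex subgroups built into Theorem \ref{thm:relhypgps}, or else to require only a routine adaptation of the corresponding absolute hyperbolic argument.
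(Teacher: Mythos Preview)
Your reduction is exactly the paper's: Corollary \ref{cor:relqcx} is obtained by combining Theorem \ref{thm:relhypgps} with \cite[Corollary 1.3]{Hruska}, and your verification of the side hypotheses (permissibility of the $\mu_i$, $E(G)=\{1\}$, and the implicit infinite-index assumption) is correct.

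Your final paragraph is unnecessary, and your hedge is right: the transverse element is already produced \emph{inside} the proof of Theorem \ref{thm:relhypgps}, via Proposition \ref{lem:transverse}, which in turn rests on Theorem \ref{thm:relqc} (the relatively quasi-convex analogue of \cite[Proposition 1]{Minasyan2}). So you do not need to supply this step separately. Note also that the paper's route to transversality is purely via word metrics and the Osin bounded-coset-penetration machinery (Proposition \ref{lem:transverse}), not via limit-set dynamics; your sketched boundary argument (``pick $f$ whose fixed-point pair lies outside every $G$-translate of $\Lambda H$'') would need more care, since the union of translates of $\Lambda H$ can be dense, but this is moot since the work is already done.
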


In many cases when $G$ is a group with an non-elementary partially WPD action on a hyperbolic metric space $X$, the subgroups of $G$ which are quasi-isometrically embedded in $X$ are precisely the \emph{stable} subgroups of $G$ in the sense of Durham--Taylor \cite{ABD, AouDurTay, DT, KobManTay}.  We prove an analogue of \cite[Proposition 1]{Minasyan2} in the setting of a stable subgroup $H$ of an acylindrically hyperbolic group $G$ (Theorem \ref{thm:stablesubgroup}). This provides an element $f$ with no non-trivial powers conjugate into $H$, a condition which is necessary for $f$ to be transverse to $H$. When $H$ is a convex cocompact subgroup of a mapping class group of a non-exceptional surface $S$ (see Section \ref{sec:MCG} for definitions), we are able to show that such an element $f$ is indeed transverse to $H$ with respect to the action of $G$ on the curve complex. This is done by exploiting some of the ``hyperbolic-like" features of Teichm\"uller space $\mc{T}(S)$ and comparing the (proper) action of $G$ on $\mc{T}(S)$ to the action of $G$ on the curve complex. Combining this with Theorem \ref{thm:main1} yields the following result.

\begin{thm} \label{thm:MCGCCC}
Let $H$ be a convex cocompact subgroup of $\MCG(S)$ such that $H\cap Z(\MCG(S))=\{1\}$, where $S$ is a non-exceptional surface.  Let $(\mu_i)$ be a sequence of permissible probability distributions on $\MCG(S)$. Then a random subgroup $R$ of $\MCG(S)$ satisfies $\langle H, R\rangle\cong H\ast R$ and $\langle H, R\rangle$ is convex cocompact.
\end{thm}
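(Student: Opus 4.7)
The plan is to apply Theorem \ref{thm:main1} to the action of $\MCG(S)$ on the curve complex $\mc{C}(S)$, which is non-elementary and acylindrical (hence partially WPD), with the pseudo-Anosov elements as loxodromic WPD elements. By Kent--Leininger and Hamenst\"adt, convex cocompactness of $H$ is equivalent to the orbit map $H \to \mc{C}(S)$ being a quasi-isometric embedding, so $H$ has both quasi-convex and quasi-isometrically embedded orbits in $\mc{C}(S)$. For non-exceptional $S$ we have $E(\MCG(S)) = Z(\MCG(S))$, so the hypothesis reads $H \cap E(\MCG(S)) = \{1\}$. It therefore suffices to produce a pseudo-Anosov $f \in \bigcap_i \Gamma_{\mu_i}$ transverse to $H$; Theorem \ref{thm:main1} will then yield $\langle H, R\rangle \cong H \ast R$ with quasi-isometrically embedded orbits in $\mc{C}(S)$, and the Kent--Leininger/Hamenst\"adt characterization promotes this to convex cocompactness of $\langle H, R\rangle$.

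Since convex cocompact subgroups of $\MCG(S)$ are stable, Theorem \ref{thm:stablesubgroup} produces a pseudo-Anosov $f \in \bigcap_i \Gamma_{\mu_i}$ no non-trivial power of which is conjugate into $H$. The remaining -- and technically substantial -- task is to upgrade this algebraic property to the geometric condition of transversality (Definition \ref{def:transverse}), and this is where Teichm\"uller geometry enters.

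Suppose for contradiction that transversality fails. Then there exist $g_n \in \MCG(S)$ and subsegments of the $\mc{C}(S)$-quasi-axis of $f$ of diameter $D_n \to \infty$ lying in a fixed $\mc{C}(S)$-neighborhood of the coset orbit $Hg_n \cdot o$. By equivariance, this is equivalent to the assertion that the Teichm\"uller axis $A_n$ of $f_n := g_n^{-1}fg_n$ contains a subsegment of Teichm\"uller length $\Omega(D_n)$ inside a bounded Teichm\"uller neighborhood of the weak convex hull $W_{H_n}$ of $H_n := g_n^{-1}Hg_n$. Here I use crucially that, by convex cocompactness of $H_n$, the systole projection $\mc{T}(S) \to \mc{C}(S)$ restricts to a quasi-isometry from $W_{H_n}$ onto a bounded neighborhood of $H_n \cdot o$ in $\mc{C}(S)$. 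Cocompactness of $H_n$ on $W_{H_n}$ together with properness of the $\MCG(S)$-action on $\mc{T}(S)$ then lets me pigeonhole over a fundamental domain of $H_n$ on $W_{H_n}$ to extract an infinite-order element $h_\ast \in H_n$ that moves $A_n$ into a bounded Teichm\"uller neighborhood of itself over a long Teichm\"uller distance. The setwise $\MCG(S)$-stabilizer of $A_n$ is virtually cyclic by Ivanov's theorem, and contains $f_n$ with finite index, so $h_\ast^N = f_n^k$ for some $N \geq 1$ and $k \neq 0$, placing a non-trivial power of $f$ in $g_n H g_n^{-1}$ and contradicting our choice of $f$.

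The main obstacle is the Teichm\"uller-to-curve-complex transfer: the systole projection fails to be a quasi-isometry in general, and the argument relies essentially on the Kent--Leininger/Hamenst\"adt fact that its restriction to the weak hull of a convex cocompact subgroup \emph{is} a quasi-isometric embedding. A secondary technical point is the conversion of long Teichm\"uller fellow-travelling into an infinite-order $H_n$-element preserving $A_n$ up to bounded error; this requires combining cocompactness of the $H_n$-action on $W_{H_n}$, properness of the $\MCG(S)$-action on $\mc{T}(S)$, and Ivanov's classification of stabilizers of pseudo-Anosov Teichm\"uller axes.
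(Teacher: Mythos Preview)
Your overall plan---reduce to finding a pseudo-Anosov transverse to $H$ for the action on $\mc{C}(S)$, then invoke Theorem~\ref{thm:main1} and the Kent--Leininger/Hamenst\"adt characterization---matches the paper exactly. The difference, and the difficulty, is entirely in how you establish transversality.

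The central gap is your transfer step. You assert that long fellow-travelling of $\beta_{f_n}$ with $H_n\cdot o$ in $\mc{C}(S)$ is \emph{equivalent} to a long segment of the Teichm\"uller axis $A_n$ lying in a bounded $\mc{T}(S)$-neighborhood of $W_{H_n}$, and you justify this only with equivariance and the fact that $\phi|_{W_{H_n}}$ is a quasi-isometric embedding. That is not enough: the systole map is merely Lipschitz, so closeness of $\phi$-images gives no control on $\mc{T}(S)$-distance. Two thick-part points can have nearby shadows in $\mc{C}(S)$ yet be arbitrarily far apart in $\mc{T}(S)$ (large twisting in proper subsurfaces). Knowing that $\phi|_{W_{H_n}}$ is a quasi-isometry onto its image says nothing about the position of $A_n$, which does not lie in $W_{H_n}$. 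This is precisely the hard direction, and you have not bridged it.

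The paper avoids this by running the comparison in the opposite direction. First (Lemma~\ref{lem:boundedTproj}) it bounds the diameter of the nearest-point projection of $gHx_0$ onto $\alpha_f$ \emph{in Teichm\"uller space}, using only that $\alpha_f$ is strongly contracting (Minsky), that $Hx_0$ is quasi-convex in $\mc{T}(S)$, and properness of the $\MCG(S)$-action on $\mc{T}(S)$; the pigeonhole there yields directly $f^{j-i}\in gHg^{-1}$, an exact equality rather than a coarse one. Then (Lemma~\ref{lem:projcommute}, using Rafi's thick-triangle Theorem~\ref{thm:thintriangles}) it shows that projection to the axis coarsely commutes with $\phi$, so the Teichm\"uller bound descends to a $\mc{C}(S)$ bound (Proposition~\ref{lem:MCGftrans}). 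This direction is tractable because $\phi$ is Lipschitz and because one only needs Rafi's theorem applied to the single cobounded geodesic $\alpha_f$.

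A secondary issue: even granting your $\mc{T}(S)$ fellow-travelling, your endgame is shaky. Your pigeonhole produces $h_\ast\in H_n$ moving \emph{one} point of $A_n$ close to another point of $A_n$; it does not place $h_\ast$ in the setwise stabilizer of $A_n$, so Ivanov's description of that stabilizer does not apply. The paper's properness pigeonhole gives an exact identity $f^{j-i}=g h_j h_i^{-1} g^{-1}$, which is what you actually need.
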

Note that $\MCG(S)$ has trivial center for any non-exceptional surface $S$ except the closed surface of genus 2, in which case the center contains a single non-trivial element, namely the hyperelliptic involution. If $S$ is a closed surface of genus two and $H$ is a convex cocompact subgroup which contains the hyperelliptic involution, then $\langle H, R\rangle$ will still be convex cocompact, but the free product needs to be replaced with a suitable amalgamated product (see Remark \ref{finnormsub}).

In particular, this implies that (almost) every convex cocompact subgroup of $\MCG(S)$ is a free factor in a larger convex cocompact subgroup:

\begin{cor}\label{cor:concofreefactor}
Let $H$ be a convex cocompact subgroup of $\MCG(S)$ such that $H\cap Z(\MCG(S))=\{1\}$, where $S$ is a non-exceptional surface. Then $\MCG(S)$ contains a convex cocompact subgroup isomorphic to $H\ast \mathbb F_k$ for all $k\geq 1$.
\end{cor}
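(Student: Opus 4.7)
The plan is to derive this as a direct existence consequence of Theorem \ref{thm:MCGCCC}. Fix $k\geq 1$. I would first select a concrete sequence of permissible distributions on $\MCG(S)$; the simplest choice is to take each $\mu_i$ to be the uniform measure on a fixed finite symmetric generating set of $\MCG(S)$, which is permissible with respect to the standard action on the curve complex as noted in the introduction (this action is non-elementary and partially WPD).

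Next, I would invoke Theorem \ref{thm:MCGCCC} for this sequence and the given convex cocompact subgroup $H$ (which by hypothesis intersects $Z(\MCG(S))$ trivially). This theorem asserts that, with probability tending to $1$, the random $k$-generated subgroup $R=\langle w_1(n),\dots,w_k(n)\rangle$ satisfies both $\langle H, R\rangle\cong H\ast R$ and $\langle H, R\rangle$ is convex cocompact. Simultaneously, by the Taylor--Tiozzo theorem cited in the introduction (every $k$ independent random walks in a non-elementary action on a hyperbolic space generate a free subgroup $\mathbb F_k$ that is quasi-isometrically embedded under the orbit map), we have $R\cong\mathbb F_k$ with probability tending to $1$ as $n\to\infty$.

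Since each of these events (being isomorphic to $H\ast R$, being convex cocompact, and $R\cong\mathbb F_k$) occurs with probability going to $1$, their intersection has probability going to $1$ as well; in particular, it is nonempty for $n$ sufficiently large. Any realization in this intersection produces a subgroup $\langle H, R\rangle\leq \MCG(S)$ that is convex cocompact and isomorphic to $H\ast\mathbb F_k$, which is exactly the conclusion.

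There is essentially no obstacle here beyond quoting the right results in the right order: all the hard work (verifying the hypotheses of Theorem \ref{thm:main1} in the mapping class group setting, producing the transverse loxodromic element, and transferring quasi-convexity in the curve complex to convex cocompactness in $\MCG(S)$) is already packaged into Theorem \ref{thm:MCGCCC}. The only point requiring mild care is making sure $k$ is arbitrary but fixed throughout the argument, since the definition of ``random subgroup'' quantifies over all $k\geq 1$; fixing $k$ first and then choosing $n$ large enough avoids any subtlety about uniformity in $k$.
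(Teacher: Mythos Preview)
Your proposal is correct and is exactly the intended argument: the paper does not give a separate proof of this corollary, treating it as an immediate consequence of Theorem~\ref{thm:MCGCCC} together with the Taylor--Tiozzo result that $R\cong\mathbb{F}_k$ for a random $k$-generated subgroup. Your care in fixing $k$ first and then taking $n$ large, and in noting that the intersection of finitely many events of asymptotic probability one is nonempty, fills in precisely the details the paper leaves implicit.
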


 It is a well-known open question whether $\MCG(S)$ contains any non-free convex cocompact subgroups \cite[Question 1.7]{FarMos}. If such a subgroup exists, then Corollary \ref{cor:concofreefactor} allows one to construct more examples of non-free convex cocompact subgroups. Note that another way to combine convex cocompact subgroups into potentially new convex cocompact subgroups also appeared recently in \cite{RST}.

We additionally prove the analogue of Theorem \ref{thm:MCGCCC} for convex cocompact subgroups  of $\Out(\mathbb F_n)$ (Theorem \ref{thm:outfn}) by using outer space instead of $\mc{T}(S)$ and the free factor complex instead of the curve complex. We conjecture that the analogue of Theorem \ref{thm:MCGCCC} also holds for stable subgroups of right-angled Artin groups and, more generally, for stable subgroups of hierarchically hyperbolic groups; see Section \ref{sec:App} for precise statements. We note that in all of these situations, $G$ has a non-elementary, partially WPD action on a hyperbolic metric space $X$ such that any subgroup of $G$ which is quasi-isometrically embedded in $X$ is stable in $G$. \\


\noindent{\bf Geometric separation.}
When $G$ has a non-elementary, acylindrical action on a hyperbolic metric space $X$, Maher--Sisto proved that if $R$ is a random subgroup of $G$ then $\langle R, E(G)\rangle$ is \emph{geometrically separated} in $X$ \cite{MS}. We refer to Definition \ref{defn:geomsep} for a precise definition, but loosely speaking a subgroup is geometrically separated in $X$ if the orbits of distinct cosets of that subgroup spread out from each other quickly. We also note that subgroups of $G$ which are both quasi-isometrically embedded in $X$ and geometrically separated are hyperbolically embedded in $G$ in the sense of \cite{DGO}. 

\begin{restatable}{thm}{geomsep}\label{thm:geomsep}
Let $G$ be a group with a non-elementary, partially WPD action on a hyperbolic metric space $X$, and let $(\mu_i)$ be a sequence of permissible probability distributions on $G$. Let $H$ be a subgroup of $G$. Suppose $H$ is quasi-convex and geometrically separated in $X$ and there exists a loxodromic WPD element $f\in\cap\Gamma_{\mu_i}$ transverse to $H$. Then for a random subgroup $R$, $\langle H, R, E(G)\rangle$ is geometrically separated in $X$. In particular, if $H$ is geometrically separated and quasi-isometrically embedded in $X$, then $\langle H, R, E(G)\rangle$ hyperbolically embeds in $G$.
\end{restatable}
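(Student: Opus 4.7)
The strategy is to combine the geometric separation of $H$ (by hypothesis) with geometric separation of $\langle R, E(G)\rangle$ (the partially WPD analogue of Maher--Sisto's random subgroup result, established in parallel in the paper), using the free product decomposition $\langle H, R\rangle \cong H \ast R$ and the quasi-convex orbits supplied by Theorem \ref{thm:main1}. Setting $K = \langle H, R, E(G)\rangle$, note that since $E(G)$ is finite, $K$ is a finite extension of $\langle H, R\rangle$, so $K x_0$ is uniformly quasi-convex in $X$. The ``hyperbolically embedded'' conclusion of the theorem follows from the geometric separation of $K$ together with Theorem \ref{thm:main1} by the standard criterion recalled in the paragraph preceding the statement.

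For the main claim, fix $g \notin K$ and an $\epsilon > 0$, and consider a geodesic segment $\gamma \subset N_\epsilon(K x_0) \cap g N_\epsilon(K x_0)$. Using the quasi-convexity of $K x_0$, I would shadow $\gamma$ by a sequence of orbit points $k_1 x_0, \dots, k_n x_0$ with $k_i \in K$, and similarly by $g k_1' x_0, \dots, g k_n' x_0$ coming from the coset $g K$. The free product structure of $K$ allows me to express each transition $k_i^{-1} k_{i+1}$ in alternating normal form over $H$ and $\langle R, E(G)\rangle$, so the $K$-shadow decomposes as a concatenation of segments each lying close to a single coset $k H x_0$ or $k \langle R, E(G)\rangle x_0$; the same holds for the $g K$-shadow.

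Comparing the two decompositions along $\gamma$ is the heart of the argument. When a long subsegment of $\gamma$ is covered by pieces of the same factor-type on both sides, the geometric separation hypothesis for that factor forces the two cosets to agree, and this agreement then propagates along $\gamma$ via the Bass--Serre tree structure of $H \ast R$. When a long piece of one decomposition is shadowed by a piece of the other decomposition of a different factor-type, the transversality of $f$ to $H$ bounds the overlap, since $f$'s quasi-axis and its $R$-conjugates cannot track $H$-orbits for long. Iterating these local bounds yields a uniform bound on $|\gamma|$ and hence geometric separation of $K$.

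The main obstacle will be the alignment analysis in the last step: matching pieces of two alternating decompositions that need not be in phase, and especially controlling behavior at the transitions between factors. The tree structure of $H \ast R$ is needed to rule out nontrivial identifications between cosets of $H$ and of $\langle R, E(G)\rangle$, while the transversality hypothesis controls cross-type overlaps; combined with the quasi-convex orbit structure from Theorem \ref{thm:main1} and the geometric separation of $\langle R, E(G)\rangle$, these constraints force $\gamma$ to have bounded length.
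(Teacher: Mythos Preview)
Your outline is in the right spirit---compare pieces of the two alternating decompositions and use separation/matching to force $g\in K$---but it differs from the paper's argument and has a couple of soft spots. The paper does \emph{not} invoke geometric separation of $\langle R,E(G)\rangle$ as a black box; Maher--Sisto's result is stated for acylindrical actions, and no partially WPD analogue is proved separately here. Instead the paper works directly with the matching estimates for the individual random-walk geodesics $\gamma_i$: with high probability each $\gamma_i$ has no long self-match, no long match with any $\gamma_j$ ($j\neq i$), and no long subpath near any translate of $\pi(H)$ (this last uses transversality of $f$ via Lemma~\ref{matchwithH}, not transversality directly as you wrote). Given a long overlap between $\pi(H_n)$ and $g\pi(H_n)$, the paper uses Theorem~\ref{t:words} to realize both sides by $(8,c)$--quasi-geodesics labeled by words in $H\cup\{w_1,\dots,w_k\}$, then extracts a \emph{single} long subpath $\eta'$ on one side and a nearby subpath $\beta'$ on the other, each contained either in a translate of some $\gamma_i^{\pm1}$ or in an $H$-labeled segment. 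A four-case analysis on the types of $\eta'$ and $\beta'$ finishes: types $(\gamma,\gamma)$ force $g\in H_n$ by the no-match properties of the $\gamma_i$; mixed types contradict the ``no long subpath near $\pi(H)$'' property; and type $(H,H)$ forces $g\in H_n$ by geometric separation of $H$.

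The main gap in your sketch is the ``propagation'' step: you propose to match up the two alternating decompositions piece by piece along $\gamma$ and iterate local bounds, but nothing controls the number of pieces, and out-of-phase decompositions make the Bass--Serre bookkeeping delicate. The paper sidesteps this entirely by taking the overlap long enough that one side must contain a full $\gamma_i$-segment (or a long $H$-segment), and then doing a single case split rather than an induction. Your cross-type bound also needs repair: transversality of $f$ to $H$ says nothing about $R$-orbits versus $H$-orbits; the correct statement is that each $\gamma_i$ has no long subpath near a translate of $\pi(H)$, which is a random-walk fact (Lemma~\ref{matchwithH}) that \emph{uses} transversality of $f$ through a match of $\gamma_i$ with $\alpha_f$.
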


This implies that if $E(G)=1$, then every hyperbolic, hyperbolically embedded subgroup of $G$ is a free factor in a larger hyperbolically embedded subgroup of $G$; see Corollary \ref{cor:hyphypembed}. \\

\noindent{\bf Asymptotic freeness.}
In order to get the free product structure in the previous theorems, some assumption about $H$ is necessary. Indeed, if $H$ is a normal subgroup of $G$, then for any $g\in G$ there will be non-trivial relations of the form $g^{-1}hg=k$ for various $h, k\in H$, hence $\langle H, K\rangle$ is not isomorphic to $H\ast K$ for any subgroup $K\leq G$. However, we can show in general that  if $H$ is generated by a finite set of elements, then the length of the shortest relation between the generators of $H$ and the generators of the random subgroup $R$ goes to infinity as the length of the random walks goes to infinity. This can be viewed as an asymptotic form of freeness between finitely generated subgroups of $G$ and random subgroups of $G$. 

Let $\mathbb F(x_1,\dots,x_k)$ be the free group generated by $x_1,\dots, x_k$, and let $W\in G\ast \mathbb F(x_1,\dots,x_k)$. For $g_1,\dots, g_k\in G$, we denote by $W(g_1,\dots, g_k)$  the element of $G$ obtained by replacing each $x_i$ with $g_i$ in $W$.
\begin{restatable}{thm}{maineq}\label{main:eq}
Let $G$ be an acylindrically hyperbolic group with $E(G)=\{1\}$, and let $\mu_1,\dots,\mu_k$ be finitely supported measures of full support on $G$. If $W\in G\ast \mathbb F(x_1,\dots, x_k)$ is non-trivial, then random elements $r_1,\dots, r_k$ generated by $\mu_1,\dots,\mu_k$ will satisfy $W(r_1,\dots, r_k)\neq 1$.
\end{restatable}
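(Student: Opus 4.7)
I plan to reduce the claim to an application of Corollary~\ref{cor:mainell}, by passing to an auxiliary hyperbolic action of $G$ adapted to the specific word $W$.

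First, I would normalize $W$ to the reduced alternating form
\[
W = h_0\, x_{i_1}^{\epsilon_1}\, h_1\, x_{i_2}^{\epsilon_2}\cdots x_{i_n}^{\epsilon_n}\, h_n
\]
in $G*\mathbb F(x_1,\dots,x_k)$, with $h_j\in G$. If $n=0$, then $W = h_0 \in G\setminus\{1\}$ and $W(r_1,\dots,r_k) = h_0\neq 1$ identically, so we may assume $n\geq 1$ and set $H := \langle h_0, \dots, h_n\rangle$. The plan is to construct a non-elementary, partially WPD action of $G$ on a hyperbolic space $X$ in which $H$ is elliptic, so that Corollary~\ref{cor:mainell} applied to $H$ in this action yields $\langle H, R\rangle \cong H*R$ with probability tending to $1$; since $W$ is nontrivial in $H*\mathbb F_k\subseteq G*\mathbb F_k$ and the natural substitution map identifies this free product with $H*R$ inside $G$, it will follow that $W(r_1,\dots,r_k)\neq 1$.

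To produce such an action, I would appeal to the construction referenced in the discussion preceding Corollary~\ref{cor:elliptic} (essentially \cite[Lemma 3.18]{H16}): for any finite subset of an acylindrically hyperbolic group $G$ with $E(G) = \{1\}$, there exists a non-elementary, partially WPD action of $G$ on a hyperbolic space in which every element of the subset is elliptic. Applying this to $\{h_0, \dots, h_n\}$ produces an action in which each $h_j$ is elliptic. The $\mu_i$ are permissible because they are finitely supported with full support on $G$, so $\bigcap_i\Gamma_{\mu_i} = G$ contains a loxodromic WPD element. When the subgroup $H$ itself is elliptic in this action, Corollary~\ref{cor:mainell} applies directly (the required condition $H\cap E(G)=\{1\}$ is automatic from the hypothesis $E(G)=\{1\}$) and finishes the argument.

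The main obstacle is that a subgroup generated by elliptic elements need not itself be elliptic: $H$ may contain a generalized loxodromic element of $G$, in which case no non-elementary, partially WPD action of $G$ makes $H$ elliptic, and the plan above fails. In that case I would fall back on Theorem~\ref{thm:main1}: the task becomes producing a non-elementary, partially WPD action of $G$ in which $H$ has quasi-convex orbits together with some loxodromic WPD element $f\in\bigcap_i\Gamma_{\mu_i}=G$ transverse to $H$. For an arbitrary finitely generated $H\leq G$, neither condition is automatic, so this construction is the technical heart of the proof; I would attempt it by combining iterated use of the ellipification lemma with the theory of hyperbolically embedded subgroups (cf.\ \cite{DGO}, \cite{Minasyan3}), perturbing by a suitably generic loxodromic WPD element and passing to an associated projection-complex-style quasi-tree to ensure both the quasi-convex orbit property for $H$ and transversality of $f$. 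Once the hypotheses of Theorem~\ref{thm:main1} are verified, we obtain $\langle H,R\rangle\cong H*R$ with probability going to $1$, and the conclusion $W(r_1,\dots,r_k)\neq 1$ follows exactly as in the main case.
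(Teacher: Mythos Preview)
Your reduction to proving $\langle H, R\rangle \cong H\ast R$ for $H=\langle h_0,\dots,h_n\rangle$ is too strong, and this is exactly where the fallback case breaks down. Take for instance $W$ whose $G$-coefficients generate all of $G$ (or any infinite normal subgroup of $G$); then $\langle H,R\rangle = G$ can never split as $H\ast R$, yet the conclusion $W(r_1,\dots,r_k)\neq 1$ still holds. So the vague plan in your second paragraph---constructing an action with $H$ quasi-convex and a transverse element via projection complexes---is not merely incomplete, it is aiming at a statement that is false in general. No amount of perturbation or passage to quasi-trees will produce a loxodromic element transverse to $H$ when $H=G$.

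The paper sidesteps this entirely by exploiting a feature of Theorem~\ref{t:words} that you overlooked: it is stated for an arbitrary \emph{subset} $S\subseteq G\setminus\{1\}$ with quasi-convex orbit, not a subgroup. One simply takes $S=\{g_1,\dots,g_l\}$ to be the finite set of $G$-letters appearing in the normal form of $W$. Since $S$ is finite, $\pi(S)$ is bounded and hence trivially quasi-convex in \emph{any} hyperbolic $X$ on which $G$ acts, and every loxodromic WPD element is automatically transverse to a bounded set. Theorem~\ref{t:words} then says directly that the path labeled by $W(w_1,\dots,w_k)$ is a quasi-geodesic, so the element is nontrivial. There is no need to choose a special action, no need for $\langle S\rangle$ to be elliptic, and no need for a free-product decomposition of $\langle H,R\rangle$.
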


This theorem is a generalization of the fact that an acylindrically hyperbolic group $G$ with $E(G)=\{1\}$ is \emph{mixed identity free}, that is for every non-trivial $W\in G\ast \mathbb F(x_1,\dots, x_k)$ there exists $g_1,\dots, g_k$ with $W(g_1,\dots, g_k)\neq 1$ \cite[Corollary 1.7]{HO16}. Note that the proof of \cite[Corollary 1.7]{HO16} is also non-constructive.\\ 

\noindent{\bf Organization.}
The paper is organized as follows. In Section \ref{sec:background} we give some background on hyperbolic metric spaces, group actions, and random walks.  In Section \ref{sec:quasigeod} we prove our main technical theorem, Theorem \ref{t:words}, from which we deduce Theorems \ref{thm:main1} and \ref{main:eq}.  We prove Theorem \ref{thm:geomsep} in Section \ref{sec:geomsep}.  In Section \ref{sec:stablesubgp}, we prove a generalization of \cite[Proposition 1]{Minasyan2} for stable subgroups, and, finally, in Section \ref{sec:App} we give several applications of our main theorems including Corollaries \ref{cor:elliptic}, \ref{cor:relqcx}, and Theorems \ref{thm:hypqc}, \ref{thm:MCGCCC}.\\

\noindent{\bf Acknowledgements.}  The authors thank Joseph Maher, Jason Manning, Bin Sun, and Sam Taylor for useful conversations. We especially thank Sam Taylor for pointing us to \cite[Lemma 4.2]{DowTay}, allowing us to complete the proof of Theorem \ref{thm:outfn}. We also thank Ashot Minasyan for comments on an earlier version of the paper, and we thank the organizers of the special session ``Boundaries and Non-positive Curvature in Group Theory" at the 2018 AMS Spring Southeastern Sectional Meeting where this project was started.  The first author was supported by NSF Award DMS-1803368.


\section{Preliminaries}
\label{sec:background}
\subsection{Hyperbolic metric spaces}

In this section we collect some basic properties of hyperbolic metric spaces. Unless a specific reference is given, the proofs of these properties are either straightforward or can be found in standard references; see for example \cite{BH}.
 
Let $X$ be a metric space. Given three points $x,y,z\in X$, the \emph{Gromov product} is defined to be \[(x\mid y)_z=\frac12\left(d_X(x,z)+d_X(y,z)-d_X(x,y)\right).\]  

We use the following ``thin triangle" definition of a $\delta$--hyperbolic metric space. This is equivalent to several other standard definitions of hyperbolic metric spaces; see for example \cite[Proposition 1.17]{BH}.

\begin{defn} \label{def:thintriangles}
A metric space $X$ is \emph{$\delta$--hyperbolic} if the following holds for any three points $x_0,x_1,x_2\in X$. If $p\in [x_0, x_1]$ and $q\in[x_0, x_2]$ satisfy $d(x_0, p)=d(x_0, q)\leq (x_1\mid x_2)_{x_0}$, then $d(p, q)\leq\delta$.
\end{defn}

A map of metric spaces $f\colon (X,d_X)\to(Y,d_Y)$ is a \emph{$(\lambda,c)$--quasi-isometric embedding} if for all $x,y\in X$, \[\frac1\lambda d_X(x,y)-c\leq d_Y(f(x),f(y))\leq  \lambda d_X(x,y)+c.\]  A \emph{$(\lambda,c)$--quasi-geodesic} is a $(\lambda,c)$--quasi-isometric embedding of an interval $I\subseteq \R$ into $X$, and a \emph{geodesic} is an isometric embedding of $I$ into $X$.  We often conflate geodesics and quasi-geodesic with their images in $X$.

 Given a subset $Y$ of a metric space $X$, let $\mc N_K(Y)$ denote the closed $K$--neighborhood of $Y$ in $X$. Let $d_{Hau}$ denote Hausdorff distance in $X$; that is, for $Y_1, Y_2\subseteq X$, the Hausdorff distance $d_{Hau}(Y_1, Y_2)$ is the infimum of all $K$ such that $Y_1\subseteq \mc{N}_K(Y_2)$ and  $Y_2\subseteq \mc{N}_K(Y_1)$.
 
 A subset $Y\subseteq X$ is \emph{$\sigma$--quasi-convex} if any geodesic in $X$ with endpoints in $Y$ is contained in $\mc{N}_\sigma(Y)$. The subspace $Y$ is called \emph{quasi-convex} if it is $\sigma$--quasi-convex for some $\sigma\geq 0$.

\begin{lem} \label{lem:Morse}
For all $\delta,c\geq 0$ and $\lambda\geq 1$, there is a constant $M=M(\delta,\lambda,c)$ satisfying the following.  Let $X$ be a $\delta$--hyperbolic space,  $q$ a $(\lambda,c)$--quasi-geodesic in $X$, and  $p$ a geodesic with the same endpoints.  Then $d_{Hau}(q, p)\leq M$.
\end{lem}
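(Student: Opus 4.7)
The plan is to prove this classical Morse lemma using only the thin-triangle definition given above, splitting the Hausdorff-distance bound into two containments: $p \subseteq \mathcal{N}_{M_1}(q)$ and $q \subseteq \mathcal{N}_{M_2}(p)$. As a preliminary reduction, I would replace $q$ by a continuous, piecewise-geodesic ``tame'' version $q'$ obtained by joining consecutive integer-parameter points of $q$ by geodesic segments. A routine check gives that $q'$ is a $(\lambda',c')$-quasi-geodesic with $\lambda',c'$ depending only on $\lambda, c$, and that $d_{Hau}(q,q') \leq \lambda + c$, so it suffices to prove the lemma for continuous quasi-geodesics, at the cost of absorbing these constants into the final $M$.

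For the harder containment $p \subseteq \mathcal{N}_{M_1}(q)$, I would argue by contradiction. The key technical ingredient, which is the geometric content of the lemma, is an exponential-divergence estimate: iterating Definition \ref{def:thintriangles} inductively at scales $\delta, 2\delta, 4\delta, \dots$ shows that any continuous path in $X$ whose image lies outside the $K$-neighborhood of a geodesic segment $p$ must have length at least $2^{K/\delta}$ times the length of its projection to $p$. Now let $D = \max_{y \in p} d(y, q)$ and pick $y^* \in p$ realizing this maximum. Choose $y_1, y_2 \in p$ on opposite sides of $y^*$ at distance roughly $2D$ from $y^*$ (truncating at the endpoints $a, b$ if necessary), and let $x_i \in q$ be closest points to $y_i$. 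Concatenating $[y_1, x_1]$, the subarc of $q$ from $x_1$ to $x_2$, and $[x_2, y_2]$ gives a continuous path from $y_1$ to $y_2$ whose image lies outside the $(D/2)$-neighborhood of the subsegment of $p$ near $y^*$. Its length is at most $\lambda \cdot d_X(x_1, x_2) + c \leq \lambda(4D + 2D) + c$ (using that $x_i$ are close to $y_i$), so the divergence estimate forces $D \leq M_1(\delta, \lambda, c)$.

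For the second containment $q \subseteq \mathcal{N}_{M_2}(p)$, I would leverage the first. Given $z \in q$, let $z' \in p$ be a closest point on $p$; we have $d(z, z') \leq d(z, p)$. Break $q$ into the two subarcs on either side of $z$; each connects to an endpoint $a$ or $b$ of $p$. Since $p \subseteq \mathcal{N}_{M_1}(q)$, the point $z'$ has some preimage-like point $z'' \in q$ within $M_1$ of it, and the quasi-geodesic length of $q$ between $z$ and $z''$ is controlled by $\lambda \cdot d_X(z, z'') + c \leq \lambda(d(z,z') + M_1) + c$. Combining this with the fact that $q$ traverses every point between its two $z'$-close witnesses yields an explicit bound $d(z, p) \leq M_2(\delta, \lambda, c)$. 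Taking $M = \max(M_1, M_2) + (\lambda + c)$ completes the proof. The main obstacle is the exponential-divergence step: the thin-triangle hypothesis only controls single triangles, and the inductive doubling-of-scales argument needed to turn this into a global statement about long paths is the substantive geometric content; the rest is essentially bookkeeping.
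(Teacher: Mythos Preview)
The paper does not give its own proof of this lemma: it is listed among the basic facts about hyperbolic spaces with an implicit reference to \cite{BH}, so there is nothing in the paper to compare against. Your outline is the standard argument (Bridson--Haefliger, Theorem~III.H.1.7): tame the quasi-geodesic, use an exponential divergence estimate to obtain $p\subseteq\mathcal{N}_{M_1}(q)$, and then bootstrap to the reverse inclusion.

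One genuine gap: your sketch of the second containment is circular as written. You produce a single point $z''\in q$ with $d(z',z'')\leq M_1$ and then bound the arc-length from $z$ to $z''$ by $\lambda(d(z,p)+M_1)+c$; but this inequality involves $d(z,p)$ on the right-hand side and gives no bound on it. What you need are \emph{two} such witnesses $z_1'',z_2''$ lying on opposite sides of $z$ along $q$, so that $d(z_1'',z_2'')\leq 2M_1$ forces the arc between them (which contains $z$) to have bounded length. The existence of witnesses on both sides is not automatic: it requires the connectedness argument in \cite{BH}, namely that the two closed sets $\{t: p(t)\in\mathcal{N}_{M_1}(q_{\text{left of }z})\}$ and $\{t: p(t)\in\mathcal{N}_{M_1}(q_{\text{right of }z})\}$ cover the geodesic $p$ and hence overlap. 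Your phrase ``$q$ traverses every point between its two $z'$-close witnesses'' presupposes exactly what needs to be shown. This is a small and standard fix, but it is the actual content of the second half of the proof.
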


We say that a constant $M$ as in Lemma \ref{lem:Morse} is a \emph{Morse constant} for $(\lambda,c)$--quasi-geodesics in a $\delta$--hyperbolic space. In some cases, we use the following explicit bound on the Morse constant.

\begin{lem} [{\cite[Theorem~1.1]{GouShc}}] \label{lem:Morsebd}
The Morse constant $M=M(\delta, \lambda, c)$ satisfies \[M\leq 92\lambda^2(c+\delta).\] 
\end{lem}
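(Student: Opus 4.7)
The plan is to follow the quantitative strategy of Gou\"ezel--Shchur. Given a $(\lambda,c)$-quasi-geodesic $q:[a,b]\to X$ with endpoints on a geodesic $p=[q(a),q(b)]$, set $D=\sup_t d(q(t),p)$, attained at some parameter $t_0$, and aim to prove $D\leq 92\lambda^2(c+\delta)$ (the Hausdorff bound will then follow from a symmetric argument showing that every point of $p$ is close to $q$, using that $q$ cannot skip over large portions of $p$). After the standard reduction to the continuous case -- by geodesically interpolating $q$ between consecutive integer times, which worsens the quasi-geodesic constants by only a bounded factor -- I would isolate the worst excursion of $q$ away from $p$ and squeeze a quantitative inequality out of it using hyperbolicity.

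The core step is a localized self-improvement argument on a maximal interval $[s_1,s_2]\ni t_0$ on which $d(q(t),p)\geq D/2$. Since $q|_{[s_1,s_2]}$ avoids the $(D/2)$-neighborhood of $p$, the thin-triangle condition (Definition~\ref{def:thintriangles}) forces the nearest-point projection $\pi:q|_{[s_1,s_2]}\to p$ to have variation bounded independently of $D$ up to $O(\delta)$ additive error; this is the coarse contracting property of geodesics in $\delta$-hyperbolic spaces. Combined with the fact that $d(q(s_i),p)=D/2$, this yields $d(q(s_1),q(s_2))\leq D+O(\delta)$. On the other hand, the quasi-isometric embedding condition forces $s_2-s_1\geq D/(2\lambda)$, because $q$ must traverse $X$-distance at least $D/2$ from $q(s_1)$ to $q(t_0)$. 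Plugging into $s_2-s_1\leq \lambda\,d(q(s_1),q(s_2))+\lambda c$ gives a functional inequality from which $D\leq C\lambda^2(c+\delta)$ can be extracted.

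The main obstacle is pinning down the explicit constant $92$. A direct implementation of the sketch above produces a constant of order several hundred or worse, because of compounding losses at each thin-triangle application and at each use of the coarse projection estimate. I expect that achieving the sharp constant requires a two-stage bootstrap: first establish a qualitative Morse bound (say with a polynomial but suboptimal dependence on $\lambda$) by a cruder argument along the lines above, and then re-feed this bound into the contraction estimate on the excursion $[s_1,s_2]$ to tighten it, iterating until the exponent in $\lambda$ stabilizes at $2$. Careful accounting of the $\delta$-hyperbolic inequalities, together with the distinction between the $\lambda$-Lipschitz parametrization of $q$ and its quasi-inverse, is what drives the constant down from ``some polynomial'' to the specific value $92$; this sharpening is the substantive content of \cite{GouShc}.
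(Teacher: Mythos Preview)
The paper does not give its own proof of this lemma: it is stated as a direct citation of \cite[Theorem~1.1]{GouShc} and is used as a black box. So there is nothing in the paper to compare your argument against, and for the purposes of this paper no proof is required beyond the citation.

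That said, what you have written is not a proof but a strategy sketch, and you acknowledge as much. The outline---maximal excursion, contracting nearest-point projection onto $p$, then a length-versus-distance comparison using the quasi-geodesic constants---is indeed the classical route to the Morse lemma and is in the same spirit as the argument in \cite{GouShc}. However, the specific constant $92$ is the entire content of the cited theorem, and your proposal explicitly does not establish it: you correctly note that a naive pass through the estimates gives a much larger constant, and you only \emph{expect} that a bootstrap will bring it down, without carrying this out. In particular, the ``two-stage bootstrap'' you describe is not how \cite{GouShc} actually proceeds; their argument is a direct quantitative tracking through a carefully organized version of the excursion estimate, not an iteration. If you want to supply a self-contained proof here, you would need to reproduce that computation in full. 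Otherwise, the appropriate thing to do is exactly what the paper does: cite the result and move on.
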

 
Let $\gamma\colon [0,n]\to X$ be a geodesic with a unit-speed parametrization in a metric space $X$.  For any $K\geq 0$, the \emph{$K$--central segment} of $\gamma$ is the subgeodesic formed by removing open $K$--balls from each endpoint, i.e., $[\gamma(K),\gamma(n-K)]$.  Note that if $K>\frac{n}{2}$, then the $K$--central segment is empty.

\begin{lem} \label{lem:centralseg}
For $i=1,2$, let $p_i\colon [0,n_i]\to X$ be geodesics in a $\delta$--hyperbolic metric space $X$.   Then the $(K+2\delta)$--central segment of $p_1$ is contained in $\mc{N}_{2\delta}(p_2)$, where $K=\max\{d_X(p_1(0),p_2(0)),d_X(p_1(n_1),p_2(n_2))\}$. 
\end{lem}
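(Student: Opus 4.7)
The plan is to apply the thin-triangle property of Definition \ref{def:thintriangles} twice, using an auxiliary geodesic joining the initial endpoint $p_1(0)$ to the terminal endpoint $p_2(n_2)$.

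Fix a point $x = p_1(t)$ on the $(K+2\delta)$-central segment, so $K+2\delta \le t \le n_1 - (K+2\delta)$. First I would consider the geodesic triangle with vertices $p_1(0), p_1(n_1), p_2(n_2)$, whose sides are $p_1$ itself, the connecting segment $[p_1(n_1), p_2(n_2)]$ of length at most $K$, and an auxiliary geodesic $q := [p_1(0), p_2(n_2)]$. A short Gromov-product estimate using $d(p_1(n_1), p_2(n_2)) \le K$ will show that $t \le (p_1(n_1) \mid p_2(n_2))_{p_1(0)} + O(K)$, so Definition \ref{def:thintriangles} gives a point $y \in q$ with $d(p_1(0), y) = t$ and $d(x,y) \le \delta$. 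The budget $t \ge K+2\delta$ is exactly what guarantees $y$ is close to the $q$ side rather than getting swept into the short connecting segment near $p_1(n_1)$.

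Next I would apply the same thin-triangle reasoning to the triangle with vertices $p_1(0), p_2(0), p_2(n_2)$, whose sides are the short segment $[p_1(0), p_2(0)]$ of length at most $K$, the geodesic $p_2$, and the auxiliary geodesic $q$. The point $y$ we produced lies on $q$ with $d(p_1(0), y) \ge t - \delta \ge K+\delta$, so again a Gromov-product computation places $y$ within the regime where Definition \ref{def:thintriangles} applies to a point on $p_2$ rather than on the short side $[p_1(0), p_2(0)]$. This yields $z \in p_2$ with $d(y,z) \le \delta$, and hence $d(x,z) \le 2\delta$ by the triangle inequality.

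The main thing to verify carefully is the Gromov-product bookkeeping at each vertex: one must check that the parameter $t$ and the shifted parameter $t - \delta$ remain at most the relevant Gromov products once the side of length $\le K$ is accounted for, and this is exactly why the central-segment buffer is $K+2\delta$ rather than $K$. Everything else is a direct application of Definition \ref{def:thintriangles} and the triangle inequality, so the argument is short once the two triangles and the buffer are chosen correctly.
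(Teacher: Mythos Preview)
Your two-triangle approach via the diagonal $q = [p_1(0), p_2(n_2)]$ is correct and is the standard argument; the paper itself does not prove this lemma, treating it as a basic fact about hyperbolic spaces.

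One expository point to clean up: in the first triangle (vertices $p_1(0), p_1(n_1), p_2(n_2)$), it is the \emph{upper} bound $t \le n_1 - (K+2\delta)$ that keeps $x$ within the Gromov-product regime $(p_1(n_1) \mid p_2(n_2))_{p_1(0)} \ge n_1 - K$ and hence close to $q$ rather than to the short side $[p_1(n_1), p_2(n_2)]$; you attribute this to the lower bound $t \ge K+2\delta$, which is backwards. The lower bound is what you use in the \emph{second} triangle, based at $p_2(n_2)$: there the condition $d(p_2(n_2), y) \le (p_1(0)\mid p_2(0))_{p_2(n_2)}$ reduces, after the triangle inequality, to $t \ge K$. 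Also, Definition~\ref{def:thintriangles} gives $d(p_1(0), y) = t$ exactly, so the ``$\ge t - \delta$'' is unnecessary. With these bookkeeping fixes the argument goes through cleanly; in fact only the $K$-central segment is needed for the $2\delta$ conclusion, so the lemma as stated carries some slack.
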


\begin{lem}
For $i=1,2$, let $q_i\colon [0,n_i]\to X$ be $(\lambda, c)$--quasi-geodesics in a $\delta$--hyperbolic metric space $X$, and let $M$ be corresponding Morse constant. Let $K=\max\{d_X(q_1(0),q_2(0)),d_X(q_1(n_1),q_2(n_2))\}$. Then $d_{Hau}(q_1, q_2)\leq 2\delta+2M+K$.
\end{lem}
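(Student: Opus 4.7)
The plan is to reduce from quasi-geodesics to geodesics using the Morse lemma, then bound the Hausdorff distance between the two resulting geodesics via slimness of quadrilaterals (which is a simple consequence of Definition \ref{def:thintriangles} applied twice), and finally combine the estimates via the triangle inequality for Hausdorff distance.

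First, I would apply Lemma \ref{lem:Morse} to each $q_i$: there is a geodesic $p_i$ from $q_i(0)$ to $q_i(n_i)$ with $d_{Hau}(q_i, p_i)\le M$. Thus, by the triangle inequality for Hausdorff distance,
\[
d_{Hau}(q_1,q_2)\;\le\; d_{Hau}(q_1,p_1)+d_{Hau}(p_1,p_2)+d_{Hau}(p_2,q_2)\;\le\;2M+d_{Hau}(p_1,p_2),
\]
so it suffices to show $d_{Hau}(p_1,p_2)\le 2\delta+K$.

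To bound $d_{Hau}(p_1,p_2)$, I would pick geodesics $s_0$ from $p_1(0)$ to $p_2(0)$ and $s_1$ from $p_1(n_1)$ to $p_2(n_2)$; by hypothesis both have length at most $K$. Together with $p_1$ and $p_2$ these form a geodesic quadrilateral, which I would split along a diagonal into two geodesic triangles. Definition \ref{def:thintriangles} gives that each triangle is $\delta$-slim, so the quadrilateral is $2\delta$-slim, i.e.\ every point of one side lies in the $2\delta$-neighborhood of the union of the other three. Given $x\in p_1$, either $x$ is within $2\delta$ of $p_2$, in which case $d_X(x,p_2)\le 2\delta$, or $x$ is within $2\delta$ of some $y\in s_i$. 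In the latter case $y$ is within $K$ of an endpoint of $p_2$, so $d_X(x,p_2)\le 2\delta+K$. The symmetric argument handles points on $p_2$, yielding $d_{Hau}(p_1,p_2)\le 2\delta+K$ and hence the desired bound $d_{Hau}(q_1,q_2)\le 2\delta+2M+K$.

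The only conceptual step is the slim-quadrilateral bound; this is standard, but care is needed to trace through the constants from the specific form of hyperbolicity in Definition \ref{def:thintriangles}. Everything else is formal bookkeeping via the Morse lemma and the triangle inequality for Hausdorff distance.
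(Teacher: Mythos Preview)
Your proof is correct. The paper does not actually supply a proof of this lemma; it is listed among the ``basic properties of hyperbolic metric spaces'' whose proofs ``are either straightforward or can be found in standard references.'' Your argument is precisely the standard one: pass to geodesics via the Morse lemma (Lemma~\ref{lem:Morse}), then use $2\delta$--slimness of geodesic quadrilaterals together with the fact that the two short sides have length at most $K$.

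One small remark on your closing caveat: Definition~\ref{def:thintriangles} (the ``thin triangles'' condition) does imply $\delta$--slimness with the same constant $\delta$, since for any $x\in[x_0,x_1]$ either $d(x_0,x)\le (x_1\mid x_2)_{x_0}$ or $d(x_1,x)\le (x_0\mid x_2)_{x_1}$, and in either case the definition furnishes a point on one of the other two sides at distance at most $\delta$. So the constant $2\delta$ in your quadrilateral step is exactly right, and no adjustment is needed.
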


Given a path $p$ in a metric space $X$, we denote the length of $p$ in $X$ by $\l(p)$.  Given a sequence of paths $p_1,\dots,p_k$, we denote their concatenation by $p_1\cdot p_2\cdot \ldots \cdot p_k$.

\begin{lem}[{\cite[Lemma 4.2]{Minasyan}}] \label{lem:brokenqgeod}
Let $x_0, x_1,\dots, x_n$ be points in a $\delta$--hyperbolic space $X$ and let $q_i$ be a $(\lambda, c)$--quasi-geodesic from $x_{i-1}$ to $x_i$. Then for any $C_0\geq 14\delta$ and for $C_1=12(C_0+\delta)+c+1$, if $\ell(q_i)\geq \lambda C_1$ and $(x_{i-1}\mid x_{i+1})_{x_i}\leq C_0$, then the concatenation $q_1\cdot \ldots\cdot q_n$ is a $(4\lambda, \frac52M+C_1)$--quasi-geodesic, where $M$ is the Morse constant for $(\lambda,c)$--quasi-goedesics in a $\delta$--hyperbolic metric space. Moreover, if each $q_i$ is a geodesic,  then $q_1\cdot \ldots\cdot q_n$ is a $(2, 2C_1)$--quasi-geodesic.
\end{lem}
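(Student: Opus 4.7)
The plan is to first establish the geodesic case (the ``moreover'' clause) and then deduce the general statement. Let $p_i:=[x_{i-1},x_i]$, so by Lemma \ref{lem:Morse} the Hausdorff distance between $q_i$ and $p_i$ is at most the Morse constant $M$, and the length bound $\ell(q_i)\ge \lambda C_1$ gives $\ell(p_i)\ge C_1-c=12(C_0+\delta)+1$. Once the geodesic concatenation $P=p_1\cdots p_n$ is known to be a $(2,2C_1)$--quasi-geodesic, a standard perturbation argument (sliding endpoints along the $q_i$'s by at most $M$, and using that each $q_i$ is a $(\lambda,c)$--quasi-geodesic between consecutive breakpoints) promotes $q_1\cdots q_n$ to a $(4\lambda,\frac52 M+C_1)$--quasi-geodesic, the doubling of the multiplicative constant absorbing the interaction between the perturbation and the $(\lambda,c)$ factors. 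Thus the real content is the geodesic case.

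The geodesic case I would prove by induction on $n$, maintaining the invariant that any geodesic $\gamma=[x_0,x_n]$ admits nearest-point projections $\bar x_1,\ldots,\bar x_{n-1}$ of the breakpoints onto $\gamma$ which appear in order and satisfy $d(x_i,\bar x_i)\le K:=C_0+2\delta$. The geometric input is that the Gromov product hypothesis $(x_{i-1}\mid x_{i+1})_{x_i}\le C_0$, combined with the thin-triangle condition (Definition \ref{def:thintriangles}), forces the initial portions of $[x_i,x_{i-1}]$ and $[x_i,x_{i+1}]$ to $\delta$--fellow-travel only for distance at most $C_0$ from $x_i$, so the broken geodesic ``turns a genuine corner'' at every $x_i$. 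Granting the invariant, the triangle inequality along $\gamma$ gives
\[
d(x_0,x_n) \;\ge\; \sum_{i=1}^n d(\bar x_{i-1},\bar x_i) \;\ge\; \sum_{i=1}^n \bigl(\ell(p_i)-2K\bigr),
\]
and the bound $\ell(p_i)\ge 12(C_0+\delta)+1 \ge 4K$ forces $\ell(p_i)-2K\ge \tfrac12\ell(p_i)$, yielding $d(x_0,x_n)\ge \tfrac12 d_P(x_0,x_n)$, i.e.\ the $(2,0)$--quasi-geodesic estimate at the endpoints. For a generic pair $y\in p_i$, $z\in p_j$, the same argument applied to the sub-broken-geodesic from $y$ to $z$ works, and the $2C_1$ additive slack absorbs contributions from the two extreme sub-segments, which may be shorter than $C_1$ and therefore fail the $\ge 4K$ hypothesis individually.

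The inductive step itself is a thin-triangle chase: comparing $[x_0,x_n]$ with $[x_0,x_{n-1}]\cdot p_n$, the length of $p_n$ guarantees that $[x_0,x_n]$ $\delta$--tracks $p_n$ near $x_n$ and thereafter $\delta$--tracks $[x_0,x_{n-1}]$; the step $n-1$ invariant upgrades the local hypothesis $(x_{n-2}\mid x_n)_{x_{n-1}}\le C_0$ to the global estimate $(x_0\mid x_n)_{x_{n-1}}\le K$, which places $x_{n-1}$ within $K$ of $[x_0,x_n]$ and shifts the earlier projections by at most $2\delta$. The principal obstacle is the constant bookkeeping at this closure step: one must verify that the length margin $12(C_0+\delta)+1$ dominates both the $2K$ projection slack at each vertex and the accumulated $\delta$--errors from successive thin-triangle chases, and this is precisely what the choice $C_1=12(C_0+\delta)+c+1$ is engineered to achieve.
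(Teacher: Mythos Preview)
The paper does not give its own proof of this lemma: it is quoted directly from \cite[Lemma~4.2]{Minasyan}, and the only additional remark is that the ``moreover'' clause can be read off from the proof there, specifically from \cite[Lemma~2.5]{Minasyan}. Your overall strategy --- prove the geodesic case first, then upgrade to $(\lambda,c)$--quasi-geodesics by the Morse lemma --- is exactly how Minasyan structures the argument, so in that sense you are reconstructing the cited proof rather than offering an alternative.

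There is, however, a genuine wobble in your inductive scheme. You propose to maintain the invariant that the nearest-point projections $\bar x_i$ of the breakpoints onto $[x_0,x_n]$ satisfy $d(x_i,\bar x_i)\le K=C_0+2\delta$, and then say that passing from $[x_0,x_{n-1}]$ to $[x_0,x_n]$ ``shifts the earlier projections by at most $2\delta$.'' Taken at face value this does not close the induction: after $n$ steps the bound would degrade to $K+2(n-1)\delta$. The invariant that actually survives the induction without drift is the Gromov-product bound $(x_0\mid x_{i+1})_{x_i}\le C_0+\delta$. One step goes as follows: the four-point inequality gives
\[
\min\bigl\{(x_0\mid x_{i-1})_{x_i},\,(x_0\mid x_{i+1})_{x_i}\bigr\}\le (x_{i-1}\mid x_{i+1})_{x_i}+\delta\le C_0+\delta,
\]
while $(x_0\mid x_{i-1})_{x_i}=d(x_{i-1},x_i)-(x_0\mid x_i)_{x_{i-1}}\ge \ell(p_i)-(C_0+\delta)$ by the inductive hypothesis; since $\ell(p_i)\ge 12(C_0+\delta)+1$ this term exceeds $C_0+\delta$, so the minimum is forced onto the other branch and $(x_0\mid x_{i+1})_{x_i}\le C_0+\delta$. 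This is the content of \cite[Lemma~2.5]{Minasyan}. Once you have these uniform Gromov-product bounds, the distance estimate and the $(2,2C_1)$ conclusion follow as you indicate, and no separate tracking of projections through a moving target geodesic is needed.
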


The moreover statement can be extracted from the proof of \cite[Lemma 4.2]{Minasyan}; in fact, it follows easily from \cite[Lemma 2.5]{Minasyan}.

\subsection{Group actions}\label{sec:GA}

 Let a group $G$ act by isometries on a metric space $X$, fix a basepoint $x_0$ of $X$, and let $\pi\colon G\to X$ denote the corresponding orbit map, that is, $\pi(g)=gx_0$. A subgroup $H$ of $G$ is  \emph{elliptic} if $\pi(H)$ is a bounded subset of $G$ and \emph{quasi-convex in $X$} if $\pi(H)$ is a quasi-convex subset of $X$. The subgroup $H$ is  \emph{quasi-isometrically embedded in $X$} if $H$ is finitely generated and for some (equivalently, any) finite generating set $S$ of $H$, the restriction $\pi |_{(H, d_S)}$ is a quasi-isometric embedding, where $d_S$ is the corresponding word metric on $H$.   All of these notions are independent of the choice of basepoint $x_0$.

Suppose now that $X$ is a hyperbolic metric space. An element $f\in G$ is called \emph{loxodromic} if $f$ acts as non-trivial translation along a bi-infinite quasi-geodesic axis, which we denote by $\alpha_f$. In this case $f$ has two limit points on $\partial X$, which we denote by $f^{+\infty}$ and $f^{-\infty}$. The action of $G$ on $X$ is called \emph{non-elementary} if $G$ has two loxodromic elements $f_1$ and $f_2$ such that $\{f_1^{+\infty}, f_1^{-\infty}\}\cap\{f_2^{+\infty}, f_2^{-\infty}\}=\emptyset$.

A loxodromic element $f$ is called a \emph{WPD element} if for all $\kappa>0$, there exists $N$ such that for any $x\in \alpha_f$,
\[
\left|\{g\in G\;|\; d(x, gx)\leq \kappa,\, d(f^Nx, gf^Nx)\leq\kappa\}\right|<\infty.
\]
In other words, $G$ contains finitely many elements which almost fix points that are far apart along the axis of $f$. A loxodromic WPD element $f$ is contained in a unique, maximal virtually cyclic subgroup $E_G(f)$ which is equal to the setwise stabilizer of $\{f^{+\infty}, f^{-\infty}\}$. There is a subgroup $E_G^+(f)$ of $E_G(f)$ with index at most $ 2$ that fixes $\{f^{+\infty}, f^{-\infty}\}$ pointwise.  We denote $E_G(f)$ and $E_G^+(f)$ by $E(f)$ and $E^+(f)$, respectively, when $G$ is understood.

The action of $G$ on $X$ is called a \emph{WPD action} if every loxodromic element is a WPD element, and a \emph{partially WPD action} if $G$ contains at least one loxodromic WPD element. The action of $G$ on $X$ is called \emph{acylindrical} if for all $\kappa>0$, there exists $N$ and $R$ such that for all $x, y\in X$ with $d(x, y)\geq R$,
\[
\left|\{g\in G\;|\; d(x, gx)\leq \kappa,\, d(y, gy)\leq\kappa\}\right|\leq N.
\]

Clearly every acylindrical action is WPD, and every WPD action with at least one loxodromic element is partially WPD. It turns out that if $G$ has a non-elementary, partially WPD action on a hyperbolic metric space, then in fact it also has a non-elementary acylindrical action on a (possibly different) hyperbolic metric space \cite{OsinAH}. Such groups are called \emph{acylindrically hyperbolic}. For background on the theory of acylindrically hyperbolic groups and various other equivalent definitions, as well as numerous examples, we refer to \cite{DGO, OsinAH}.

Finally, we give the definition of element being transverse to a subgroup. This notion is one of the key elements in the proofs of our main theorems.
\begin{defn}\label{def:transverse}
Let a group $G$ act on a hyperbolic metric space $X$,  and let $S$ be a subset of $G$. A loxodromic element $f$ is \emph{transverse to $S$} if $f$ has a quasi-geodesic axis $\alpha_f$ in $X$  such that for all $K>0$, there exists $L\geq 0$ such that $\diam(\alpha_f\cap \mc{N}_K(g\pi(S)))\leq L$ for all $g\in G$.
\end{defn}

\subsection{Random walks}\label{sec:RW}

Let $\mu$ be a probability distribution on a group $G$. We denote the support of $\mu$ by $\Supp(\mu)$ and the semi-group generated by the support of $\mu$ by $\Gamma_\mu$. If $\Gamma_\mu$ is in fact a subgroup of $G$, then $\mu$ is called \emph{reversible}. We say $\mu$ is \emph{countable} if $\Supp(\mu)$ is countable, $\mu$ is \emph{finitely supported} if $\Supp(\mu)$ is finite, and  $\mu$ has \emph{full support} if $\Gamma_\mu=G$. Given a fixed action of $G$ on a hyperbolic metric space $X$, the probability distribution $\mu$ is \emph{bounded} if some (equivalently, every) orbit of $\Supp(\mu)$ is a bounded subset of $X$, \emph{non-elementary} if the action of $\Gamma_\mu$ on $X$ is non-elementary, and \emph{WPD} if  $\Gamma_\mu$ contains at least one loxodromic WPD element.

Given a reversible, non-elementary, WPD probability distribution $\mu$ on $G$, there exists a unique, maximal finite subgroup of $G$ normalized by $\Gamma_\mu$ \cite[Lemma 5.5]{H16}; see also \cite[Proposition 1.14]{MaherTiozzo18}. We denote this subgroup by $E_G(\mu)$, or just $E(\mu)$ when $G$ is understood.  We note that $E(\mu)$ will always contain the maximal finite normal subgroup of $G$, which we denote by $E(G)$.

\begin{defn}
We say $\mu$ is \emph{permissible} (with respect to $X$) if it is bounded, countable, reversible, non-elementary, WPD, and $E(\mu)=E(G)$. 
\end{defn}

 Note that for the canonical example when the support of $\mu$ is a finite symmetric generating set of $G$, $\mu$ will be finitely supported, hence countable and bounded for any action of $G$. In addition, such $\mu$ will have full support and hence be non-elementary and WPD for any non-elementary, partially WPD action of $G$. The fact that $\Gamma_\mu=G$ also implies that $E(\mu)=E(G)$. In particular, a finitely supported probability distribution of full support will be permissible with respect to any non-elementary, partially WPD action of $G$ on a hyperbolic metric space.

Throughout this section, we fix a group $G$ with a non-elementary, partially WPD action on a $\delta$--hyperbolic metric space $X$ such that $E(G)=\{1\}$. We also fix a  permissible probability distribution $\mu$ on $G$ and let $w(n)$ be the random walk of length $n$ associated to $\mu$.  If $n$ is fixed, then we will simply use $w$.  When we consider a sequence $(\mu_i)$ of permissible probability distributions, we let $w_i(n)$ (or simply $w_i$, if $n$ is fixed) denote the  random walk of length $n$ associated to $\mu_i$.  Let $x_0\in X$ be a fixed basepoint.  We will state all of the results in this section under these assumptions; many hold in greater generality, but we will not need the full statements here.

\begin{thm} [{\cite[Corollary~11.5]{MaherTiozzo18}}] \label{thm:lox}
The probability that $w$ is loxodromic and WPD with $E(w)=E^+(w)=\langle w\rangle$ tends to one as $n$ tends to infinity.
\end{thm}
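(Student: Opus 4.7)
The plan is to follow the Maher--Tiozzo framework and establish the three conclusions---loxodromicity, the WPD property, and the computation $E(w)=E^+(w)=\langle w\rangle$---in turn, using positive drift of the random walk as the driving engine. The underlying heuristic is that a long random trajectory $x_0, w(1)x_0, \ldots, w(n)x_0$ is, with high probability, a quasi-geodesic in $X$ whose endpoints lie near distinct generic points of $\partial X$, so $w(n)$ inherits good dynamical properties from the structure of $\Gamma_\mu$.

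For loxodromicity, I would first invoke the drift theorem: under the bounded, non-elementary hypotheses on $\mu$, there exists $\ell>0$ such that $d_X(x_0, w(n)x_0)/n \to \ell$ almost surely. Combining this with the fact that forward and backward random walks converge to distinct points of $\partial X$ almost surely, a standard ping-pong argument using the Gromov products $(w(n)x_0 \mid w(n)^{-1} x_0)_{x_0}$ being small compared to $n\ell$ forces $w(n)$ to act loxodromically on $X$ with probability tending to $1$.

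For the WPD property, fix a loxodromic WPD element $h \in \Gamma_\mu$ (available by permissibility). By positive drift together with the Markov property, with probability tending to $1$ the trajectory from $x_0$ to $w(n)x_0$ contains a long segment near each endpoint that fellow-travels a translate of the axis $\alpha_h$, with the bulk of the translation length of $w(n)$ concentrated in the middle. An alignment argument along the lines of Lemma \ref{lem:brokenqgeod} then shows that the axis of $w(n)$ contains arbitrarily long subsegments of translates of $\alpha_h$, and the WPD condition for $h$ transfers to $w(n)$ via the standard thin-triangle arguments for WPD elements.

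For the final assertion, three things must be shown: (i) $w(n)$ is not a proper power; (ii) the finite kernel $F$ of the translation length homomorphism $E^+(w(n)) \to \Z$ is trivial; and (iii) no element of $G$ conjugates $w(n)$ to its inverse. Assertion (i) follows from the law of large numbers: if $w(n)=v^k$ for some $k\geq 2$, the trajectory would exhibit a coarse $k$-fold periodicity in its increments, an event whose probability vanishes since the increments are i.i.d.\ and non-degenerate along the boundary direction. Assertion (iii) is handled by a similar reversal-symmetry argument: flipping $w(n)$ would require the trajectory and its reverse to have asymptotically aligned directions on $\partial X$, which fails generically. The main obstacle is (ii). Any element $f\in F$ must fix both endpoints of $\alpha_{w(n)}$ and almost-commute with $w(n)$; one exploits $E(\mu)=E(G)=\{1\}$ by arguing that for a generic random $w(n)$, any such finite $F$ would be normalized not only by $w(n)$ but by much of $\Gamma_\mu$ (obtained by perturbing short prefixes of the walk, using that $\mu$ has full semigroup support in $\Gamma_\mu$), forcing $F \subseteq E(\mu) = \{1\}$.
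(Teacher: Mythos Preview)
The paper does not prove this statement; it is quoted directly as \cite[Corollary~11.5]{MaherTiozzo18} and used as a black box. So there is no in-paper argument to compare against, and the question is simply whether your sketch stands on its own.

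Your outlines for loxodromicity and for the WPD property are standard and essentially correct in spirit: drift plus boundary convergence gives loxodromicity, and fellow-travelling a translate of a known WPD axis transfers the WPD condition. Likewise, your arguments for (i) and (iii) are really self-matching arguments in disguise and can be made rigorous.

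The genuine gap is in (ii). Your proposed mechanism---perturb short prefixes of the walk and conclude that the finite kernel $F$ is normalized by ``much of $\Gamma_\mu$'', hence lies in $E(\mu)$---does not work as stated. The group $F$ is defined as the torsion in $E^+(w(n))$, and $E^+(w(n))$ is the pointwise stabilizer of $\{w(n)^{+\infty}, w(n)^{-\infty}\}$. Both fixed points depend on the \emph{entire} word $w(n)$: changing a prefix moves $w(n)^{-\infty}$ and changing a suffix moves $w(n)^{+\infty}$. So replacing $g_1\cdots g_m$ by another prefix produces a different element with, in general, a different pair of fixed points and a different finite kernel $F'$. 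There is no reason the original $F$ should be normalized by the perturbed element, and the passage to $F\subseteq E(\mu)$ collapses.

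The argument Maher--Tiozzo actually run (and which the present paper invokes elsewhere, e.g.\ in Lemma~\ref{lem:gamma_nmatch} via \cite[Proposition~11.7]{MaherTiozzo18}) is uniform across (i)--(iii) and goes through matching estimates: any nontrivial $g\in E(w)\setminus\langle w\rangle$ moves the axis $\alpha$ to within bounded Hausdorff distance of itself, which forces a long $(\varepsilon Dn, K)$--self-match of $\alpha$ (equivalently of $\gamma$) by an element outside $\langle w\rangle$. One then shows, using the WPD property of a fixed reference element in $\Gamma_\mu$ together with exponential decay of shadows, that the probability of such a self-match tends to zero. This is where the hypothesis $E(\mu)=E(G)=\{1\}$ actually enters: it guarantees there is no fixed finite subgroup that every random axis must accommodate. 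Rewriting your step (ii) along these self-matching lines would close the gap.
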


As shown in \cite{MT}, the limit $D=\lim_{n\to\infty}\frac1n d_X(x_0,w(n)x_0)$ exists and is positive almost surely.  We say that $D$ is the \emph{drift} of the random walk.  
 The following result describes how far a random element moves the basepoint $x_0\in X$ in terms of its drift. 
\begin{thm} [{\cite[Theorem~1.2]{MT}}] \label{thm:drift}
 There is a positive drift constant $D$ such that for any $\e>0$ there are constants $K>0$ and $c<1$ depending on $\mu$ and $\e$ such that for all $n$,
	\[\Prob\big((1-\e)Dn\leq d(x_0,wx_0)\leq (1+\e)Dn\big)\geq 1-Kc^n.\] \end{thm}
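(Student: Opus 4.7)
The plan is to separate the statement into existence and positivity of the drift, which is qualitative, then to establish the exponential concentration, handling the upper and lower tails by different mechanisms. For existence, the sequence $a_n := d(x_0, w(n)x_0)$ is subadditive, $a_{n+m} \leq a_n + d(w(n)x_0, w(n+m)x_0)$, and $\mathbb{E}[a_n] < \infty$ because $\mu$ is bounded. Kingman's subadditive ergodic theorem, applied to the Bernoulli shift on the increment sequence, yields an almost-sure limit $D := \lim_n \tfrac{1}{n} a_n \geq 0$.

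To show $D > 0$, I would use non-elementariness together with the partial WPD hypothesis. Since $\Gamma_\mu$ acts non-elementarily, it contains two independent loxodromics with disjoint fixed point sets; a ping-pong / Kesten-style estimate then shows that there is a positive probability at each step of the walk hitting a finite pattern which forces a definite amount of progress along a common quasi-axis. Combined with Theorem~\ref{thm:lox}, which ensures $w(n)$ is eventually loxodromic, Birkhoff's ergodic theorem converts this into a strictly positive lower bound on $D$.

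For the exponential concentration, the upper tail is comparatively easy: boundedness of $\mu$ provides a uniform bound $d(x_0, g_i x_0) \leq R$ on each step, so $M_n := a_n - \mathbb{E}[a_n]$ is a martingale with $2R$-bounded differences, and Azuma--Hoeffding gives $\Prob\bigl(a_n \geq (1+\e)Dn\bigr) \leq K_1 c_1^n$. The lower tail is the main obstacle, because $a_n$ is only subadditive (not additive) in the increments: backtracking can suppress $a_n$ without shrinking any individual step. I would handle this by passing to an additive proxy, namely the Busemann cocycle $\beta_\xi$ at the almost-sure boundary limit $\xi = \lim_n w(n)x_0$. This cocycle decomposes as a sum of stationary increments to which a Hoeffding-type bound applies, and in a $\delta$-hyperbolic space one has the comparison $|\beta_\xi(w(n)) - a_n| \leq 2(\xi \mid x_0)_{w(n)x_0} + O(\delta)$, where the Gromov product on the right can be shown to be $o(n)$ with exponentially high probability via a shadow-type estimate. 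This quantitative shadow estimate -- that the walk does not linger in regions where it can undo its progress toward $\xi$ -- is where the partial WPD hypothesis enters, through a bound on the number of group elements that almost-fix long geodesic segments. This final step, rather than the subadditive ergodic theorem or Azuma--Hoeffding, is where essentially all the geometric content of the theorem is concentrated.
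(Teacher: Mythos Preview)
The paper does not prove this theorem at all; it is quoted verbatim from Maher--Tiozzo \cite{MT} and used as a black box. So there is no ``paper's proof'' to compare against, and your sketch should be measured against the actual argument in \cite{MT} (and the related work of Mathieu--Sisto).

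Your large-scale outline is reasonable: Kingman for existence of $D$, a separate argument for $D>0$, Azuma--Hoeffding for the upper tail. The lower-tail argument, however, has a genuine gap. You propose to replace $a_n=d(x_0,w(n)x_0)$ by the Busemann cocycle $\beta_\xi(w(n))$ at the almost-sure limit point $\xi=\lim_n w(n)x_0$, and then apply a Hoeffding-type bound to the increments. The problem is that $\xi$ is a function of the entire infinite trajectory $(g_i)_{i\ge 1}$, so the increments $\beta_\xi(w(n))-\beta_\xi(w(n-1))$ are \emph{not} measurable with respect to the natural filtration $\sigma(g_1,\dots,g_n)$: each one depends on the future through $\xi$. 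They therefore do not form a martingale-difference sequence, and neither Azuma nor Hoeffding applies. Calling them ``stationary increments'' does not help---Hoeffding needs independence or a martingale structure, not stationarity. The usual fix (sample $\xi$ from the stationary measure $\nu$ on $\partial X$, independently of the walk) produces a genuinely stationary sequence, but then the increments are still correlated through the Markov chain $w(n)^{-1}\xi$, and extracting exponential concentration requires a spectral-gap or mixing input that you have not supplied and that is not available in this generality (the space $X$ is not assumed proper and the action is only partially WPD).

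The actual proof in \cite{MT} (see also Mathieu--Sisto) avoids horofunctions for the lower tail and instead proves directly that the walk makes ``definite linear progress'' with exponentially high probability: one finds $L,p>0$ and $m\in\mathbb{N}$ such that, conditionally on the past, the next $m$ steps increase the distance to $x_0$ by at least $L$ with probability at least $p$, and moreover this progress is \emph{not undone} by later steps because of a Gromov-product (shadow) estimate. A large-deviations bound for the resulting sequence of approximately independent Bernoulli successes then gives the exponential lower tail. The geometric content is in the shadow estimate, as you correctly intuit, but it enters earlier and more directly than in your outline.

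A smaller point: invoking Theorem~\ref{thm:lox} to establish $D>0$ is circular in this paper's logical order, since in \cite{MT,MaherTiozzo18} loxodromicity of $w(n)$ is deduced \emph{from} positive drift, not the other way around. Your ping-pong idea for positivity is closer to the truth, but on its own it only shows that $d(x_0,w(n)x_0)$ is often large, not that it is large \emph{linearly in $n$}; the missing piece is again the ``progress is not undone'' step above.
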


Let $\gamma$ denote a geodesic from $x_0$ to $wx_0$ in $X$,  and let $\alpha$ be the axis of $w$, assuming $w$ is loxodromic. In the case that we have a multiple random walks $w_i$ of length $n$ associated to probability distributions $\mu_i$ with drifts $D_i$, we use $\gamma_i$ and $\alpha_i$, respectively, and we also let $D=\min\{D_i\}$.

An important tool in the study of random walks on hyperbolic spaces with a $G$--action is matching estimates.  The following definition of matching is from Maher--Sisto \cite{MS}.
\begin{defn}
Two geodesic $p$ and $q$ in $X$ have an \emph{$(A, B)$--match} if there are subgeodesics $p'\subseteq p$ and $q^\prime\subseteq q$ of length at least $A$ and an element $g\in G$  such that $d_{Hau}(gp', q^\prime)\leq B$.

We say a geodesic $p$ has an \emph{$(A,B)$--self-match} if there are (not necessarily disjoint) subgeodesics $p',p''$ of $p$ of length at least $A$ and an element $g\in G\setminus\{1\}$ such that $d_{Hau}(gp',p'')\leq B$.  
\end{defn}

We note that the term ``self-match" is used slightly differently in \cite{MaherTiozzo18} where they require that $p'$ and $p''$ be disjoint.

\begin{prop}  [{\cite[Proposition~1.5]{DH}, \cite[Proposition 7.5]{MaherTiozzo18}}]\label{thm:matching}
There is a constant $K_0$ depending only on $\delta$ such that for all $K\geq K_0$ and any $\e>0$, the probability that $w$ is loxodromic with axis $\alpha$, and $\gamma$ and $\alpha$ have a $((1-\e)Dn,K)$--match tends to one as $n\to\infty$.
\end{prop}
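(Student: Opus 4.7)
The plan is to extract the match from a standard quadrilateral argument, after establishing that both the displacement $d(x_0, wx_0)$ and the stable translation length $\tau(w)$ grow linearly in $n$ at rate $D$.

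By Theorem~\ref{thm:lox}, I may condition on the event that $w$ is loxodromic WPD with quasi-axis $\alpha$; by Theorem~\ref{thm:drift}, I may further condition on $(1-\e)Dn \leq d(x_0, wx_0) \leq (1+\e)Dn$. Both events have probability tending to one. The non-trivial input is a matching lower bound $\tau(w) \geq (1-\e)Dn$ with high probability. I would deduce this by comparing $w$'s axis to the bi-infinite random walk: the one-sided random walk converges almost surely to a point in $\partial X$ \cite{MaherTiozzo18}, reversibility implies that the backward walk converges to a transverse boundary point, and the orbit $\{w^j x_0\}_{j \in \Z}$ then tracks the axis $\alpha$ up to bounded Hausdorff distance; Theorem~\ref{thm:drift} thereby transfers to a linear lower bound on $\tau(w)$. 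Plugging into the standard $\delta$--hyperbolic axis-displacement estimate
\[
d(x_0, w x_0) \geq \tau(w) + 2\, d(x_0, \alpha) - C_\delta
\]
yields $d(x_0, \alpha) \leq 2\e Dn + C_\delta/2$.

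Next, let $p, q \in \alpha$ be nearest-point projections of $x_0$ and $wx_0$, respectively. By equivariance of $w$ on $\alpha$ and uniqueness of nearest-point projection up to $O(\delta)$, $d(wp, q) = O(\delta)$, so the subsegment $\sigma := [p, q]$ of $\alpha$ has length at least $\tau(w) - O(\delta) \geq (1-2\e)Dn$. The Gromov products $(x_0 \mid q)_p$ and $(p \mid wx_0)_q$ are each $O(\delta)$ by standard nearest-point projection estimates in hyperbolic spaces, so Lemma~\ref{lem:brokenqgeod} certifies the broken geodesic $[x_0, p] \cdot \sigma \cdot [q, wx_0]$ as a $(2, 2C_1)$--quasi-geodesic joining $x_0$ to $wx_0$. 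Morse stability (Lemma~\ref{lem:Morse}) then places this quasi-geodesic within Hausdorff distance $M(\delta, 2, 2C_1)$ of $\gamma$, so $\sigma$ matches a sub-arc of $\gamma$ of the same length within a constant $K_0 = K_0(\delta)$, giving the desired $((1-\e')Dn, K_0)$--match with $g = 1 \in G$ after relabeling $\e' = O(\e)$.

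The main obstacle is the translation-length estimate: $\tau(w)$ is an infinite limit over powers of $w$, not a single random-walk quantity, so Theorem~\ref{thm:drift} does not apply to it directly. The resolution outlined above routes through the boundary dynamics of the bi-infinite walk, for which reversibility of the $\mu_i$ is essential; all remaining steps are standard hyperbolic-geometry manipulations.
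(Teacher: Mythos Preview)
The paper does not supply its own proof of this proposition; it is quoted directly from \cite[Proposition~1.5]{DH} and \cite[Proposition~7.5]{MaherTiozzo18}. So there is nothing in the paper to compare your argument against, and you are effectively attempting to reprove a cited black box.

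Your quadrilateral argument after the translation-length estimate is fine: once you know $\tau(w)\geq(1-\e)Dn$ and $d(x_0,wx_0)\leq(1+\e)Dn$, the nearest-point-projection and broken-geodesic estimates do produce a match of the required length at Hausdorff distance depending only on $\delta$, exactly as you describe.

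The gap is in your justification of $\tau(w)\geq(1-\e)Dn$. You write that the one-sided and reversed random walks converge to distinct boundary points, and then that ``the orbit $\{w^j x_0\}_{j\in\Z}$ tracks the axis $\alpha$''. The second statement is a tautology: $\alpha$ \emph{is} (a quasi-geodesic through) that orbit. What you need, and have not argued, is that $\alpha$ passes close to $x_0$ --- equivalently, that the attracting and repelling fixed points $w^{\pm\infty}$ of the single isometry $w=w(n)$ lie near the boundary limits $\xi^\pm$ of the bi-infinite random walk trajectory $\{w(j)x_0\}$. These are a priori unrelated objects: $\xi^\pm$ are determined by the entire infinite walk, while $w^{\pm\infty}$ are determined by the dynamics of one group element. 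Establishing this proximity (with quantitative control in $n$) is precisely the content of the results you are citing in \cite{MaherTiozzo18}, and it requires genuine work --- typically an estimate on the Gromov product $(x_0\mid w^2x_0)_{wx_0}$ via decay of shadows, or a direct comparison of $w^{\pm\infty}$ with $\xi^\pm$ using the harmonic measure. Without that step, Theorem~\ref{thm:drift} tells you nothing about $\tau(w)$, and your argument is circular: you have reduced the proposition to its own hardest ingredient.
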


The following lemma bounds the length of a self-match that can occur in the geodesic $\gamma$.

\begin{lem}\label{lem:gamma_nmatch} 

 Let $0<\e<1$, $K\geq 0$, and  $\eta$ be a subpath of $\gamma$ with $\l(\eta)\geq \e Dn$.  Then the probability that there exists an element $h\in G\setminus\{1\}$ such that $h\eta\subset N_K(\gamma)$ approaches 0 as $n\to \infty$.
\end{lem}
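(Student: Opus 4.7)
The plan is to argue by contradiction. Suppose $h \in G \setminus \{1\}$ satisfies $h\eta \subset N_K(\gamma)$; I will show this forces an element of $E(w)$ to translate a long subpath of the axis $\alpha$ of $w$ by too little, contradicting the asymptotic translation length. Fix $\e' > 0$ much smaller than $\e$. By Theorem \ref{thm:drift}, Proposition \ref{thm:matching}, and Theorem \ref{thm:lox}, together with the standard fact that the translation length $\tau(w)$ satisfies $\tau(w)/n \to D$ almost surely, we may assume with probability tending to $1$ that $w$ is loxodromic and WPD with $E(w) = \langle w \rangle$, that $|d(x_0, wx_0) - Dn| \leq \e' Dn$ and $\tau(w) \geq (1-\e')Dn$, and that there exist $g_1 \in G$ and subpaths $\gamma' \subset \gamma$, $\alpha' \subset \alpha$ with $\ell(\gamma') \geq (1-\e')Dn$ and $d_{Hau}(g_1\gamma', \alpha') \leq K_0$.

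Next I would transfer the hypothesized self-match to $\alpha$. Since $\gamma \setminus \gamma'$ has total length at most $2\e' Dn$, the subpath $\eta_1 := \eta \cap \gamma'$ has length at least $(\e - 2\e')Dn$, and $g_1 \eta_1$ is $K_0$-Hausdorff close to a subpath $\beta \subset \alpha'$ of comparable length. On the image side, $h\eta$ is a geodesic with endpoints within $K$ of $\gamma$, so the tracking estimate for quasi-geodesics with close endpoints produces a subpath $\tilde\eta \subset \gamma$ with $d_{Hau}(h\eta, \tilde\eta) \leq K + 2\delta$; repeating the bookkeeping on $\tilde\eta \cap \gamma'$ yields a subpath $\tilde\beta \subset \alpha'$, of length at least $(\e - 2\e')Dn - O(K + \delta)$, that $g_1 h\eta_1$ tracks. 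Hence $g' := g_1 h g_1^{-1}$ carries $\beta$ onto $\tilde\beta$ up to uniformly bounded error, so $g'\alpha$ and $\alpha$ fellow-travel on a subpath of length at least $(\e - 2\e')Dn - O(K + K_0 + \delta)$. By the boundary rigidity of quasi-geodesic lines in a hyperbolic space, for $n$ large this forces $g'\alpha$ to share both endpoints at infinity with $\alpha$, so $g' \in E(w) = \langle w \rangle$ and $g' = w^k$ for some integer $k \neq 0$.

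Finally I would extract the contradiction from translation length. The element $w^k$ shifts $\alpha$ along itself by $|k|\tau(w) \geq |k|(1-\e')Dn$, while both $\beta$ and $w^k \beta$ lie in a bounded neighborhood of $\alpha'$, which has length at most $(1+\e')Dn + 2K_0$. Packing $\beta$ and its translate into this window yields
\[
|k|(1-\e')Dn + (\e - 2\e')Dn \leq (1+\e')Dn + O(K + K_0 + \delta),
\]
so $|k| \leq (1 - \e + O(\e'))/(1 - \e')$, which is strictly less than $1$ once $\e'$ is chosen small enough (say $\e' < \e/4$) and $n$ is large, contradicting $k \neq 0$. The main obstacle is this final step: confining $g'$ to $E(w)$ is routine from WPD and quasi-geodesic rigidity, but eliminating small powers such as $|k| = 1$ requires that both the matched segment $\alpha'$ and the translation length $\tau(w)$ be nearly the full drift distance $Dn$, and that $\beta$ occupy a fixed fraction of $\alpha'$. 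Carefully harmonizing the error terms and invoking the asymptotic law $\tau(w)/n \to D$ is the main technical content.
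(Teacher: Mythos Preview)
Your argument has a genuine gap at the ``boundary rigidity'' step. You claim that if $g'\alpha$ and $\alpha$ fellow-travel on a segment of length roughly $\e Dn$, then for $n$ large $g'\alpha$ must share both endpoints at infinity with $\alpha$, hence $g'\in E(w)$. This inference is invalid: two bi-infinite quasi-geodesics in a hyperbolic space can fellow-travel on an arbitrarily long finite segment and still have distinct endpoints at infinity (think of two geodesics in $\mathbb{H}^2$ crossing at a small angle). The WPD condition gives only that finitely many elements move two far-apart points on $\alpha$ a bounded amount; it does not force any such element into $E(w)$. So from the long match you obtain, the element $g'$ may well lie outside $E(w)=\langle w\rangle$, and your step~3 never applies.

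This is precisely the case the paper's proof handles by outside input. The paper first observes (essentially your step~3, carried out for $h$ rather than a conjugate) that $h\notin\langle w\rangle$, then transfers the match to a self-match of $\alpha$ by an element not in $E(w)$, and finally invokes \cite[Proposition~11.7]{MaherTiozzo18}, which says the probability that the random axis $\alpha$ admits an $(\e'' Dn, K')$--self-match by an element outside $E(w)$ tends to~$0$. That probabilistic statement about random axes is the real content here, and it cannot be replaced by a deterministic rigidity argument: for a fixed $w$ there may exist such $g'\notin E(w)$; the point is that this is asymptotically unlikely for the random $w(n)$. Your translation-length computation correctly disposes of the case $g'\in\langle w\rangle\setminus\{1\}$, but you still need the Maher--Tiozzo self-match estimate (or an equivalent) for the complementary case.
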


\begin{proof}
Theorem \ref{thm:lox} implies that $E(w)=\langle w\rangle$ with probability approaching 1 as $n\to\infty$.
Let $\eta$ be a subpath of $\gamma$ with $\l(\eta)\geq \e Dn$, and suppose there exists a constant $K$ and an element $h\in G\setminus\{1\}$ such that $h\eta\subset N_K(\gamma)$.  Notice that this implies that $h\not\in\langle w\rangle$.  Fix $\e'\in(1-\e,1)$, and let $K_0$ be given by Proposition \ref{thm:matching}.  With probability approaching 1 as $n\to \infty$, $\gamma$ and $\alpha$ have an $(\e' Dn,K_0)$--match by Proposition \ref{thm:matching}, and so it follows from our choice of $\e'$ that there is a subsegment $\eta'$ of $\eta$ satisfying $\l(\eta')\geq(\e'+\e-1) Dn$ that is contained in the $K_0$--neighborhood of $\alpha$.  Thus there is a subpath of $\alpha$ of length at least $(\e'+\e-1) Dn-2K_0$ whose image under $h$ is contained in the $(2K_0+K)$--neighborhood of $\alpha$.  Therefore there is an $\e''>0$ such that  $\alpha$ has a $(\e''Dn,2K_0+K) $--self-match for all sufficiently large $n$.  As $h\not\in \langle w\rangle =E(w)$,  the probability that this occurs approaches 0 as $n\to\infty$ by \cite[Proposition~11.7]{MaherTiozzo18}.
\end{proof}

\begin{lem}\cite[Corollary 8.12]{MaherTiozzo18}\label{lem:matching}
There exists a constant $K_0$ such that for all $K\geq K_0$ the following holds. Let $w_1$ and $w_2$ be random walks of length $n$ with respect to permissible probability distributions. Then for any $0<\e<1$, the probability that $\gamma_1$ and $\gamma_2$ have an $(\e Dn, K)$--match goes to $0$ as $n\to\infty$.
\end{lem}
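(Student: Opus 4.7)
The plan is to transfer a match between $\gamma_1$ and $\gamma_2$ into an algebraic near-conjugacy between powers of $w_1$ and $w_2$, and then to rule out this near-conjugacy using the independence of the two random walks. By Theorem~\ref{thm:lox} and Proposition~\ref{thm:matching}, we may work on the event (of probability tending to $1$ as $n\to\infty$) that each $w_i$ is loxodromic WPD with $E(w_i)=\langle w_i\rangle$ and quasi-axis $\alpha_i$, and that $\gamma_i$ admits an $((1-\e/4)Dn, K_0)$--match with $\alpha_i$. Suppose additionally that $\gamma_1$ and $\gamma_2$ admit an $(\e Dn, K)$--match via some $g \in G$. Concatenating the three matches yields a subsegment of $\alpha_1$ of length at least $(\e/2)Dn$ whose image under $g$ lies within Hausdorff distance $2K_0 + K$ of a subsegment of $\alpha_2$.

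Next, promote this geometric fellow-traveling to an algebraic constraint. There exist integers $N, M$ of order $n$ such that $w_1^N$ and $g^{-1} w_2^M g$ translate along the common subsegment by approximately equal amounts, so the element $(g^{-1} w_2^{-M} g) w_1^N$ almost fixes two far-apart points of $\alpha_1$. By the WPD property of $w_1$ together with the assumption $E(w_1)=\langle w_1\rangle$, this element must lie in $\langle w_1\rangle$, yielding a conjugacy $g^{-1} w_2^M g = w_1^{N+k}$ with $|k|$ bounded by a constant depending only on $\delta, K, K_0$. Thus every $(\e Dn, K)$--match produces a conjugacy between comparable-order powers of $w_1$ and $w_2$, via a conjugating element $g$ whose basepoint-displacement is $O(n)$.

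Finally, condition on $w_2$. The data $(g, N, M, k)$ producing such a conjugacy is only polynomial in $n$: the pair of starting positions for the matching subsegments (each ranging over $O(n)$ options) determines $g$ up to a bounded ambiguity (by WPD applied to $w_2$), while $N, M, k$ range over a polynomial set. For each such datum, the identity $w_1^{N+k} = g^{-1} w_2^M g$ forces the random element $w_1(n)$ to lie in a finite set of bounded cardinality (the $(N+k)$-th roots of a prescribed loxodromic element), and the probability of this event decays faster than any polynomial in $n$ by standard decay estimates for permissible random walks, analogous to those used in \cite[Proposition~11.7]{MaherTiozzo18} and in the proof of Lemma~\ref{lem:gamma_nmatch}. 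A union bound completes the argument. The hardest step is this final quantitative decay estimate, uniform over the polynomial collection of possible conjugating elements; this is precisely where the full strength of the permissibility assumptions (in particular boundedness and non-elementarity) is used.
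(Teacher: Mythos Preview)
The paper does not give its own proof of this lemma; it cites \cite[Corollary 8.12]{MaherTiozzo18} and remarks that, although that result is stated for disjoint subpaths of a single random walk, the same argument applies to two independent walks with only the obvious changes.

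Your attempted direct proof has a genuine gap in the second paragraph. The element $w_i=w_i(n)$ has translation length along $\alpha_i$ of order $D_in\ge Dn$ (this follows from Theorem~\ref{thm:drift} together with Proposition~\ref{thm:matching}), whereas the matched subsegment of $\alpha_1$ you produce has length only about $(\e/2)Dn<Dn$. Thus even a single application of $w_1$ carries every point of the matched segment entirely out of it, and likewise for $g^{-1}w_2g$. Consequently, for any integers $N,M\ge 1$ the intermediate point $w_1^Np$ need not lie anywhere near $g^{-1}\alpha_2$, and there is no reason for $(g^{-1}w_2^{-M}g)\,w_1^N$ to nearly fix two far-apart points of $\alpha_1$; the WPD hypothesis simply cannot be invoked. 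The standard ``long fellow-traveling implies near-conjugacy'' mechanism requires the translation length of the elements involved to be small compared to the length of the fellow-traveling segment, and here the inequality runs the wrong way. Declaring $N,M$ to be ``of order $n$'' only makes matters worse, since then $w_1^N$ displaces points by order $Dn^2$.

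Because no conjugacy relation can actually be extracted from the match, the remainder of your argument (the polynomial counting of conjugating data and the union bound) has nothing to act on. The Maher--Tiozzo proof avoids this obstacle entirely: rather than trying to convert a short match between axes of large translation length into an algebraic identity, it estimates directly the probability that the sample path tracks a prescribed segment for a linear amount of time and combines this with a union bound over the possible positions of the match.
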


We note that \cite[Corollary 8.12]{MaherTiozzo18} is stated for disjoint subpaths of a single random walk, but the same proof shows the above lemma with only the obvious changes.

\begin{lem}[{\cite[Lemma 7.7]{MaherTiozzo18}}]\label{lem:matchwithf}
 Suppose $\alpha_f$ is the axis of a loxodromic WPD element $f\in \Gamma_\mu$. Then there exists a $K_0$ such that for all $\e>0$, for any $K\geq K_0$, and for any $L\geq 0$, the probability that any subpath $\eta$ of $\alpha$ with $\l(\eta)\geq \e Dn$ has an $(L, K)$--match with a translate of $\alpha_f$ goes to 1 as $n\to\infty$.

\end{lem}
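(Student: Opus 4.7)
The plan is to exploit the hypothesis that $f \in \Gamma_\mu$: this means $f$ (and every power of $f$) can be written as a finite product of support elements of $\mu$, and so at a positive density of positions along a random walk of length $n$, the walk literally traverses a power of $f$. Each such traversal produces a local fellow-traveling of $\gamma$ with a translate of $\alpha_f$. Since these occurrences are densely distributed along $\gamma$ with high probability, and since a central piece of $\gamma$ fellow-travels with $\alpha$ by Proposition~\ref{thm:matching}, every subpath of $\alpha$ of length at least $\e Dn$ will contain such a match.

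More precisely, given $L \geq 0$, first choose $m \in \N$ large enough that $d(x_0, f^m x_0) \geq L + C$ for a constant $C$ depending only on the quasi-geodesic constants of $\alpha_f$ and on $\delta$. Since $f \in \Gamma_\mu$, write $f = s_1 \cdots s_\ell$ with $s_i \in \Supp(\mu)$ and set $(s_1', \dots, s_{m\ell}') = (s_1, \dots, s_\ell)^m$, so $p := \prod_{i=1}^{m\ell} \mu(s_i') > 0$. Partition the random walk into disjoint consecutive blocks of length $m\ell$ and let $\mathcal{E}_j$ be the event that the $j$-th block spells out exactly $(s_1', \dots, s_{m\ell}')$. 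The events $\mathcal{E}_j$ are i.i.d.\ Bernoulli of parameter $p$. Fix any $\delta_0 \in (0, \e D / 4)$; by a standard Bernoulli large-deviations estimate, with probability tending to $1$ as $n \to \infty$, every window of $\delta_0 n$ consecutive blocks contains at least one $j$ with $\mathcal{E}_j$. On the event $\mathcal{E}_j$, setting $g_j = w(jm\ell)$, the subsegment of $\gamma$ between $g_j x_0$ and $g_j f^m x_0$ has both endpoints on the translated axis $g_j \alpha_f$, so by thin-triangle arguments it is uniformly Hausdorff-close to a subsegment of $g_j \alpha_f$ of length at least $L + C$.

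The last step is to transfer these $\gamma$-level matches to $\alpha$. By Proposition~\ref{thm:matching}, with probability tending to $1$ the geodesic $\gamma$ and the axis $\alpha$ admit a $((1-\e_0)Dn, K_0)$-match for any preassigned small $\e_0 > 0$; concretely, the $K_0$-neighborhood of $\alpha$ contains a central segment of $\gamma$ of length at least $(1-\e_0)Dn$. Choose $\e_0$ much smaller than $\e$ and $\delta_0$. Any subpath $\eta \subseteq \alpha$ with $\l(\eta) \geq \e Dn$ corresponds, under the matching, to a subpath $\eta' \subseteq \gamma$ of length at least $\e Dn - 2K_0 - O(\e_0 Dn)$ lying in the central piece. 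By our density estimate, $\eta'$ contains at least one successful block and hence a subsegment $K_0$-close to a translate $g_j \alpha_f$ of length $\geq L + C$. Composing the two matchings gives an $(L, 2K_0 + O(\delta))$-match between $\eta$ and $g_j \alpha_f$, which yields the conclusion after setting the final $K_0$ in the statement large enough to absorb constants.

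I expect the main obstacle to be bookkeeping the two independent sources of error: the large-deviation control on spacing of the blocks $\mathcal{E}_j$ must be tightened against $\e$ and $D$, while the parameter $\e_0$ from the $\gamma$-to-$\alpha$ matching must be chosen so that subpaths of $\alpha$ of length $\e Dn$ always correspond to genuinely interior subpaths of $\gamma$, and not to pieces near the endpoints where the matching breaks down. Once these two scales are chosen compatibly, the composition of the local fellow-traveling with a translate of $\alpha_f$ and the global fellow-traveling of $\gamma$ with $\alpha$ produces the desired $(L,K)$-match uniformly over all sufficiently long subpaths.
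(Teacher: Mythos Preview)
The paper does not give its own proof of this lemma; it is quoted from \cite[Lemma~7.7]{MaherTiozzo18}. So there is no in-paper argument to compare against, and I will evaluate your outline on its own terms.

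There is a genuine gap, and it is not the constant-bookkeeping you flag at the end. Your density estimate on the events $\mathcal{E}_j$ is a statement about \emph{windows of step indices}, while the conclusion concerns \emph{spatial subsegments} of $\gamma$ (and then of $\alpha$); you never establish a correspondence between the two. Concretely, when you speak of ``the subsegment of $\gamma$ between $g_j x_0$ and $g_j f^m x_0$'', note that $g_j x_0 = w((j{-}1)m\ell)\,x_0$ and $g_j f^m x_0 = w(jm\ell)\,x_0$ are intermediate sample-path points, not points of $\gamma$, and there is no a priori reason they lie near $\gamma$. To extract an $(L,K)$-match of $\gamma$ with $g_j\alpha_f$ you would need both of these points to lie within a \emph{fixed} distance of $\gamma$ and to project to points of $\gamma$ at least $L$ apart. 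Neither follows from what you have written: the tracking results available in this generality (non-proper $X$, bounded countable $\mu$) give only sublinear deviation of the sample path from $\gamma$, so the match constant you produce would grow with $n$ rather than being a fixed $K$. And even granting bounded tracking at a positive density of times, you would still have to show that every arclength-$\e Dn$ window of $\gamma$ contains the image of at least one successful block, which requires a uniform step-to-arclength progress estimate you have not invoked.

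The argument in \cite{MaherTiozzo18} avoids the sample path altogether and works directly with $\gamma$ and $\alpha$ via their decay and matching estimates, which is why no tracking input is needed there. If you want to rescue your route, you would need a bounded-distance tracking statement holding at a positive density of step indices together with a uniform progress bound relating step index to arclength along $\gamma$; in the generality of the paper's standing hypotheses such statements are not available.
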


Finally, we show that when there exists an element $f$ that is transverse to $H$, the geodesic $\gamma$ does not have a long subpath contained in a neighborhood of $H$. Recall that $\pi\colon G\to X$ denotes the orbit map defined by $\pi(g)=gx_0$.

\begin{lem}\label{matchwithH}
Let  $S$ be a subset of $G$ such that $\pi(S)$ is $\sigma$--quasi-convex in $X$. Suppose there exists a loxodromic WPD element $f\in \Gamma_\mu$ which is transverse to $S$. Then there exists a constant $K_0$ such that for any $K\geq K_0$ and any $0<\e<1$, the probability that $\gamma$ has a subpath of length $\e Dn$  contained in the $K$--neighborhood of a translate of $\pi(S)$ goes to $0$ as $n\to\infty$.
\end{lem}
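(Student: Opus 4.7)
The plan is to argue by contradiction, chaining Proposition \ref{thm:matching} and Lemma \ref{lem:matchwithf} to push any hypothetical long subpath of $\gamma$ lying in a $K$-neighborhood of a translate of $\pi(S)$ first onto the quasi-axis $\alpha$ of $w$, and from there onto the axis $\alpha_f$ of $f$. Once a long subpath of $\alpha_f$ is shown to sit inside a bounded neighborhood of some translate of $\pi(S)$, transversality of $f$ to $S$ will produce the required contradiction, so the event that such a subpath of $\gamma$ exists must have probability tending to $0$.

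Let $K_0$ be the larger of the matching constants in Proposition \ref{thm:matching} and Lemma \ref{lem:matchwithf}; fix $K\geq K_0$ and $0<\e<1$, and choose auxiliary $\e',\e''>0$ with $\e'+\e''<\e/2$. Two further constants will be chosen at the end of the argument: a small $\e_0>0$ (depending only on $\e,\delta,K_0$, and the quasi-geodesic constants of $\alpha$) giving the guaranteed length of a subpath of $\alpha$ produced in the fellow-traveling step below, and a matching length $L$. I will work inside an event of probability tending to $1$ as $n\to\infty$ on which the following hold simultaneously: (a) $\l(\gamma)\leq(1+\e'')Dn$ (Theorem \ref{thm:drift}); (b) there exist $h_1\in G$ and subpaths $\gamma'\subseteq\gamma$, $\alpha'\subseteq\alpha$ of length at least $(1-\e')Dn$ with $d_{Hau}(h_1\gamma',\alpha')\leq K_0$ (Proposition \ref{thm:matching}); and (c) every subpath of $\alpha$ of length at least $\e_0 Dn$ admits an $(L,K_0)$-match with a translate of $\alpha_f$ (Lemma \ref{lem:matchwithf} with parameter $\e_0$).

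Assume toward a contradiction that $\gamma$ contains a subpath $\eta$ of length at least $\e Dn$ with $\eta\subset\mc N_K(g_0\pi(S))$ for some $g_0\in G$. By (a) the part of $\gamma$ outside $\gamma'$ has length less than $(\e/2)Dn$, so $\eta_2:=\eta\cap\gamma'$ is a subsegment of length at least $(\e/2)Dn$ still contained in $\mc N_K(g_0\pi(S))$. Let $\alpha''$ be the subpath of $\alpha'$ between the closest-point projections onto $\alpha'$ of the endpoints of $h_1\eta_2$. The geodesic $h_1\eta_2$ and the quasi-geodesic $\alpha''$ have $K_0$-close endpoints, so a standard Morse-type fellow-traveling estimate gives $d_{Hau}(h_1\eta_2,\alpha'')\leq K_1$ for some $K_1$ depending only on $\delta,K_0$, and the quasi-geodesic constants of $\alpha$. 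In particular $\alpha''\subset\mc N_{K+K_1}(h_1g_0\pi(S))$, and a quasi-geodesic length estimate forces $\l(\alpha'')\geq\e_0 Dn$ for $n$ large. Applying (c) to $\alpha''$ produces $h_2\in G$ and a subpath $\zeta\subseteq\alpha_f$ with $\l(\zeta)\geq L$ and $\zeta\subset\mc N_{K_0}(h_2\alpha'')\subset\mc N_{K+K_0+K_1}(h_2h_1g_0\pi(S))$. Invoking transversality of $f$ to $S$ with parameter $K+K_0+K_1$ yields a uniform bound $L_0=L_0(K+K_0+K_1)$ with $\diam(\alpha_f\cap\mc N_{K+K_0+K_1}(g\pi(S)))\leq L_0$ for every $g\in G$. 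Since $\alpha_f$ is a quasi-geodesic, the diameter of $\zeta$ grows linearly with $L$, so taking $L$ large enough at the outset in (c) contradicts the bound on the diameter and completes the proof.

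The main geometric obstacle is the fellow-traveling step that converts ``$\eta_2\subseteq\gamma$ lies in $\mc N_K(g_0\pi(S))$'' into ``a comparable-length subpath $\alpha''\subseteq\alpha$ lies in $\mc N_{K+K_1}(h_1g_0\pi(S))$''; this is a routine consequence of the Morse lemma for geodesics and quasi-geodesics at bounded Hausdorff distance in a hyperbolic space, so once the constants are fixed in the order $K_0\to K_1\to\e_0\to L_0\to L$, the random-walk inputs from Theorem \ref{thm:drift}, Proposition \ref{thm:matching}, and Lemma \ref{lem:matchwithf} carry all of the probabilistic content of the argument.
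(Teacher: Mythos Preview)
Your argument follows the same overall strategy as the paper's: push a long subpath of $\gamma$ near a translate of $\pi(S)$ onto the axis $\alpha$ of $w$ via Proposition~\ref{thm:matching}, then onto a translate of $\alpha_f$ via Lemma~\ref{lem:matchwithf}, and invoke transversality for a contradiction. The probabilistic inputs and the order in which you assemble them are essentially identical to the paper's proof.

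There is one point that deserves care. You obtain $d_{Hau}(h_1\eta_2,\alpha'')\leq K_1$ by a Morse estimate and explicitly let $K_1$ depend on ``the quasi-geodesic constants of $\alpha$.'' But $\alpha$ is the axis of the random element $w=w(n)$, and those constants are not a~priori uniform in $n$; if $K_1$ grows with $n$ then so does the transversality bound $L_0=L_0(K+K_0+K_1)$, and you can no longer fix $L$ independently of $n$ in your application of Lemma~\ref{lem:matchwithf}. The paper sidesteps this by introducing an auxiliary geodesic $q$ between two points of $g\pi(S)$: quasi-convexity gives $q\subseteq\mc N_\sigma(g\pi(S))$, and the concatenation $[a,a']\cdot\eta\cdot[b',b]$ is a $(1,2K)$--quasi-geodesic, so $\eta\subseteq\mc N_M(q)$ with $M$ depending only on $\delta$ and $K$. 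All subsequent constants are then independent of $n$. This is the one place where the quasi-convexity hypothesis is actually used, and it is exactly what makes the chain of constants close. Your more direct route avoids quasi-convexity but needs either a uniform bound on the quasi-geodesic constants of $\alpha$ or a different argument to control $K_1$ independently of $n$; as written this is a gap, though a repairable one.
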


\begin{proof}
Let $K_0$ be as in Lemma \ref{lem:matchwithf}, let $K_1=\max\{2\delta, K_0\}$, and fix a constant $K\geq K_0$.   Let $M$ be the Morse constant for $(1,2K)$--quasi-geodesics in $X$.  Suppose that $\pi(S)$ is $\sigma$--quasi-convex in $X$.

Fix $0<\e<1$, $0<\e'<\e$, and $1-(\e-\e')<\e''<1$.  Fix sufficiently large $n\in \mathbb N$, and let $w$ be the random walk of length $n$ associated to the permissible probability distribution $\mu$.   We assume that $w$ is loxodromic with axis $\alpha$ and that each of the following holds:
\begin{enumerate}
\item $(1-\e')Dn\leq d(x_0,wx_0)\leq (1+\e')Dn$.
\item $\gamma$ and $\alpha$ have an $(\e''Dn, K_1)$--match.
\end{enumerate}

Both of these hold with probability approaching one as $n\to\infty$ by Theorem \ref{thm:drift} and Proposition \ref{thm:matching}.

Suppose that $\gamma$ has a subpath of length $\e Dn$ which is contained in a $K$--neighborhood of $g\pi(S)$ for some $g\in G$. Then we can choose $a,b\in g\pi(S)$ and $a', b'\in\gamma$ such that $d(a,a')\leq K$, $d(b,b')\leq K$, and $d(a',b')\geq \e Dn$. Let $q$ be a geodesic from $a$ to $b$.  Since $g\pi(S)$ is $\sigma$--quasi-convex, we have $q\subseteq N_{\sigma}(g\pi(S))$. Let $\eta$ be the subpath of $\gamma$ from $a'$ to $b'$. For all sufficiently large $n$, the path that is the concatenation of $[a,a']$, $\eta$, and $[b',b]$ is a $(1,2K)$--quasi-geodesic, and therefore $\eta\subseteq N_{M}(q)$.  

Now $\gamma$ has a $(\e'' Dn, K_1)$--match with $\alpha$. Since $\l(\gamma)\leq (1+\e')Dn$ and $\l(\eta)\geq \e Dn$, it follows that $\eta$ must have subpath of this match of length at least $\e''Dn-((1+\e')Dn-\e Dn)=(\e''-(1-(\e-\e')))Dn$. Our choice of $\e''$ ensures that $\e''':=\e''-(1-(\e-\e'))>0$. 

Since $\eta\subset N_{M}(q)$, we get that $q$ has a $(\e''' Dn-2M, K_1+M)$--match with $\alpha$. Let $\beta$ be the corresponding subpath of $\alpha$. For any $L$ (fixed with respect to $n$), Lemma \ref{lem:matchwithf} allows us to assume that $\beta$ has a $(L, K_0)$--match with $\alpha_f$, and so $q$ has a $(L, K_0+K_1+M)$--match with $\alpha_f$.  Thus there is an element $g'\in G\setminus\{1\}$ and a subpath $\kappa\subseteq g'\alpha_f$ with $\l(\kappa)\geq L$ such that $\kappa\subseteq N_{K_0+K_1+M}(q)$, and hence $\kappa\subseteq N_{K_0+K_1+M+\sigma}(g\pi(S))$. But for sufficiently large $L$, this will contradict the fact that $f$ is transverse to $\pi(S)$.  See Figure~\ref{fig:qcxity}.
\begin{figure}
\def\svgwidth{2.5in}  
  \centering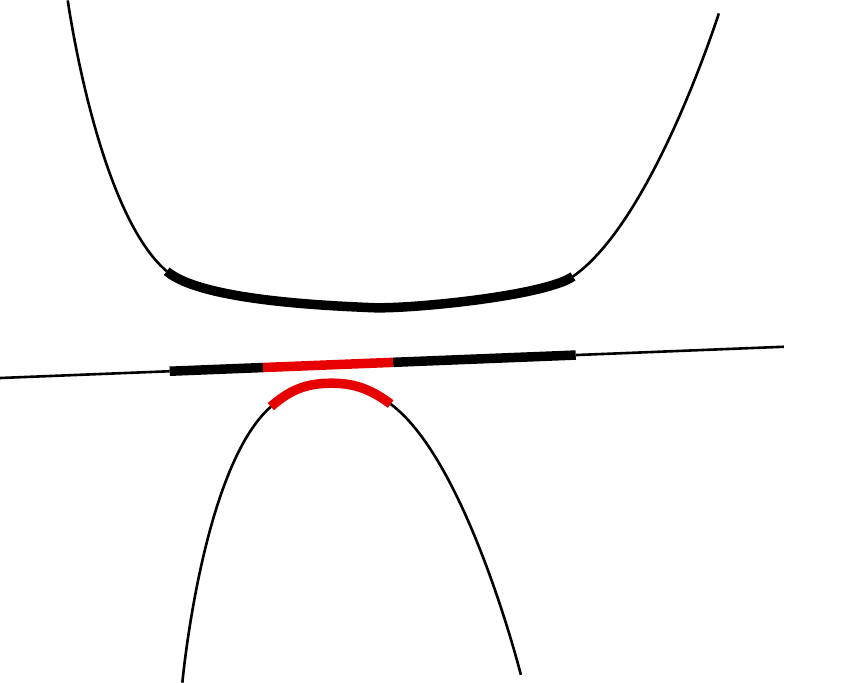 \\
	\caption{The bold black subpaths have length at least $\e'''Dn-2M$ and are within $K_1+M$ of each other, while the red bold subpaths have length at least $L$ and are within $K_0$ of each other.} 
	\label{fig:qcxity}
\end{figure}
\end{proof}

\section{Quasi-geodesic words}\label{sec:quasigeod}

Let $G$ be a group acting on a metric space $X$, and let $W=g_1\cdots g_n$ be a word in the elements of $G$.  We say that $W$ \emph{represents} the element $g_1\dots g_n\in G$ and $W$ is \emph{reduced} if $g_i\neq g_{i+1}^{-1}$ for $1\leq i<n$. Let $\|W\|$ denote the length of the word $W$, i.e., $\|W\|=n$. By a \emph{path labeled by $W$ in $X$ based at $x_0\in X$}, we mean a path from $x_0$ to $g_1\cdots g_nx_0$ which is a concatenation of geodesics of the form
\[
[x_0, g_1x_0]\cdot[g_1x_0, g_1g_2x_0]\cdot\ldots\cdot[g_1\cdots g_{n-1}x_0, g_1\cdots g_nx_0].
\]

We will refer to $g_i$ as the \emph{label} of the subpath $[g_1\cdots g_{i-1}x_0, g_1\cdots g_ix_0]$; subpaths labeled by subwords of $W$ are defined similarly. Since $G$ is acting by isometries, the length of $[g_1\cdots g_{i-1}x_0, g_1 \cdots g_ix_0]$ is equal to $d(x_0, g_ix_0)$ and $(g_1\cdots g_{i-1}x_0\mid g_1\cdots g_{i+1}x_0)_{g_1\cdots g_{i}x_0}= (g_{i}^{-1}x_0\mid g_{i+1}x_0)_{x_0}$.

For the rest of this section, we let $G$ be a group with a non-elementary, partially WPD action on a $\delta$--hyperbolic metric space $X$ and  $(\mu_i)$  a sequence of permissible probability distributions on $G$.  Let $w_{i}$ denote the element generated by a random walk of length $n$ with respect to $\mu_i$.  Fix a basepoint $x_0\in X$, and let $\pi\colon G\to X$ denote the orbit map $\pi(g)=gx_0$.  Let  $\gamma_i$ denote a geodesic $[x_0, w_{i}x_0]$,  let $D_i$ denote the drift of the random walk with respect to $\mu_i$ for each $i$, and let $D=\min D_i$.

\begin{lem}\label{lem:t3paths}
 Let $S$ be a subset of $G$ such that $\pi(S)$ is $\sigma$--quasi-convex in $X$, and suppose there exists a loxodromic WPD element $f\in \cap\Gamma_{\mu_i}$ which is transverse to $S$. Then for any $\e'>0$, the following holds with  with probability going to one as $n\to\infty$. For any $s\in S$, and any $1\leq i\leq k$, the path in $X$ labeled by $sw_{i}$ based at $x_0$ is a $(2, c')$--quasi-geodesic where $c'=24\e' Dn+24\delta+2$.
\end{lem}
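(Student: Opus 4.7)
The path in question is $p := [x_0, sx_0] \cdot [sx_0, sw_i x_0]$, a concatenation of two geodesics meeting at the single internal vertex $sx_0$. The plan is to apply the ``moreover'' clause of Lemma \ref{lem:brokenqgeod} with $\lambda=1$, $c=0$, and $C_0 = \e' Dn$; this produces $C_1 = 12(\e' Dn + \delta)+1$ and the resulting constant $2C_1 = 24\e' Dn+24\delta+2$ matches $c'$ exactly. Thus the main task is to verify, with probability tending to $1$ and uniformly over $s \in S$ and $i \in \{1,\dots,k\}$, the Gromov product bound
\[
\tau \;:=\; (x_0 \mid sw_i x_0)_{sx_0} \;\leq\; \e' Dn.
\]

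To bound $\tau$, observe that by $G$-invariance $\tau = (s^{-1}x_0 \mid w_i x_0)_{x_0}$, so Definition \ref{def:thintriangles} forces the initial subsegment of $\gamma_i$ of length $\tau$ to lie within $\delta$ of the initial subsegment of $[x_0, s^{-1}x_0]$. Since $s \in S$, the translate $s^{-1}\pi(S)$ contains $x_0 = s^{-1}(sx_0)$, and in our intended application $1 \in S$ so $s^{-1}x_0 \in s^{-1}\pi(S)$ as well; quasi-convexity then gives $[x_0, s^{-1}x_0] \subseteq N_{\sigma}(s^{-1}\pi(S))$, and hence the initial $\tau$-subsegment of $\gamma_i$ lies in $N_{\sigma+\delta}(s^{-1}\pi(S))$. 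Applying Lemma \ref{matchwithH} to the permissible distribution $\mu_i$ (which has the required transverse element $f$), with $K = \max\{K_0,\sigma+\delta\}$, we conclude that with probability tending to one no subsegment of $\gamma_i$ of length $\geq \e' Dn$ lies in $N_K(g\pi(S))$ for any $g \in G$. Taking $g = s^{-1}$ forces $\tau < \e' Dn$; crucially, because Lemma \ref{matchwithH} is \emph{uniform in the translate} $g\pi(S)$, a single high-probability event handles all $s \in S$ at once.

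With this Gromov product bound in place, Lemma \ref{lem:brokenqgeod} applies provided each of the two geodesic pieces has length at least $C_1$. The second piece has length $d(x_0, w_i x_0) \geq (1-\e')D_i n \geq (1-\e')Dn$ with high probability by Theorem \ref{thm:drift}, which exceeds $C_1 \approx 12\e' Dn$ for any $\e' < 1/13$ and $n$ sufficiently large. If the first piece $[x_0, sx_0]$ is shorter than $C_1$, then Lemma \ref{lem:brokenqgeod} does not apply verbatim, but in that regime one verifies the quasi-geodesic inequality directly: $d(x_0, sw_i x_0)\geq d(x_0, w_i x_0)-d(x_0, sx_0)\geq \ell(p)-2C_1$, and for intermediate pairs of points the estimate follows from the geodesicity of each piece, with the excess absorbed into the additive constant $c'$. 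A union bound over $i=1,\dots,k$ (possible since $f\in \cap_i \Gamma_{\mu_i}$) completes the proof.

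The main obstacle is exactly the uniform control over the infinite family $\{s\in S\}$: one cannot apply a separate probability estimate to each $s$. This is resolved, as indicated above, by the uniformity-in-translates built into Lemma \ref{matchwithH}, which is precisely why that lemma was stated in that form. Once uniform control on $\tau$ is established, the rest of the argument reduces to bookkeeping with Lemma \ref{lem:brokenqgeod} and a case split on the length of $[x_0, sx_0]$.
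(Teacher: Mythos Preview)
Your proof is correct and follows essentially the same approach as the paper: bound the Gromov product at the vertex $sx_0$ by showing that otherwise an initial segment of (a translate of) $\gamma_i$ would fellow-travel a geodesic lying in a neighborhood of a translate of $\pi(S)$, contradicting Lemma~\ref{matchwithH}, then apply Lemma~\ref{lem:brokenqgeod} with a case split on whether $d(x_0,sx_0)\geq C_1$. One small caveat: your assertion that ``in our intended application $1\in S$'' is not literally true in Theorem~\ref{t:words}, where $S\subseteq G\setminus\{1\}$---but the paper's own proof makes the equivalent implicit assumption that $[x_0,sx_0]\subseteq N_\sigma(\pi(S))$, and in the applications $\pi(S)\cup\{x_0\}$ is the orbit of a subgroup and hence quasi-convex, so this is harmless.
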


\begin{proof}  
 Fix $K=\delta+\sigma$, $0<\e'<\frac12$, and  $C_1=12(\e' Dn+\delta)+1$.  Suppose that $\gamma_i$ has no subpath of length $\e' Dn$ which is contained in the $K$--neighborhood of a translate of $\pi(S)$ and that $\gamma_i$ has length greater than $C_1$. Each of these occurs with probability approaching one as $n\to\infty$ by Lemma \ref{matchwithH} and Theorem \ref{thm:drift}, respectively.

Let $s\in S$, and let $p=[x_0, sx_0]\cdot[sx_0, sw_ix_0]$. We first suppose that $d(x_0, sx_0)\geq C_1$. In this case, we must have $(x_0\mid sw_ix_0)_{sx_0}\leq \e' Dn$. Otherwise, the initial subpath of  $[sx_0, sw_ix_0]$ of length $\e' Dn$ would be contained in the $\delta$--neighborhood of $[x_0, sx_0]$, and hence in the $(\delta+\sigma)$--neighborhood of $\pi(S)$, which contradicts our initial assumption. Thus, in this case, we can apply Lemma \ref{lem:brokenqgeod}, which gives that $p$ is a $(2, 2C_1)$ quasi-geodesic.

Now suppose that $d(x_0, sx_0)\leq C_1$. Then $p$ is the concatenation of a path of length at most $C_1$ and a geodesic, so $p$ is a $(1, 2C_1)$--quasi-geodesic.
\end{proof}

The following is our main technical result. 

\begin{thm}\label{t:words}
Let $G$ be a group with a non-elementary, partially WPD action on a $\delta$--hyperbolic metric space $X$ and let $(\mu_i)$ be a sequence of permissible probability distributions on $G$. Suppose $E(G)=\{1\}$, and let $S$ be a subset of $G\setminus\{1\}$ such that $\pi(S)$ is $\sigma$--quasi-convex. Suppose also that there exists a loxodromic WPD element $f\in \cap\Gamma_{\mu_i}$ which is transverse to $S$. Then there exists a constant $c=c(n,\delta)$ such that the following holds with probability approaching one as $n\to\infty$.  Given any reduced word $W$ in the alphabet $S\cup\{w_{1},\dots,w_{k}\}$ with no consecutive letters belonging to $S$, the path in $X$ based at $x_0$ and labeled by $W$ is a $(8, c)$--quasi-geodesic. In particular, $W$ represents a non-trivial element of $G$. 
\end{thm}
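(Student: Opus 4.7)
The plan is to decompose the $W$-labeled path into a sequence of quasi-geodesic ``blocks'' meeting at sharp enough corners to invoke Lemma \ref{lem:brokenqgeod}. Fix $\e\in(0,1/24)$ and a sufficiently large constant $K$. Combining Theorem \ref{thm:drift}, Proposition \ref{thm:matching}, Theorem \ref{thm:lox}, and Lemmas \ref{lem:t3paths}, \ref{matchwithH}, \ref{lem:gamma_nmatch}, and \ref{lem:matching}, the following all hold simultaneously with probability tending to one as $n\to\infty$: \textbf{(a)} for every $i$ and every $s\in S$, the paths labeled by $sw_i^{\pm 1}$ and by $w_i^{\pm 1}s$ (the latter by the argument of Lemma \ref{lem:t3paths} applied at the endpoint of $\gamma_i$, which is symmetric because $\gamma_i$ and its reverse coincide as subsets of $X$) are $(2,c')$--quasi-geodesics; \textbf{(b)} no $\gamma_i$ contains a subpath of length $\e Dn$ within a $K$--neighborhood of any $G$--translate of $\pi(S)$; \textbf{(c)} no $\gamma_i$ admits an $(\e Dn, K)$--self-match; \textbf{(d)} for $i\ne j$, $\gamma_i$ and $\gamma_j$ have no $(\e Dn,K)$--match; \textbf{(e)} each $\gamma_i$ has length in $[(1-\e)Dn,(1+\e)Dn]$; and \textbf{(f)} each $w_i$ is loxodromic with $E(w_i)=\langle w_i\rangle$.

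Given such $W$, decompose $W=B_1\cdots B_k$ into blocks of three types: \textbf{(A)} a single $w_j^{\pm 1}$; \textbf{(B)} $sw_j^{\pm 1}$ with $s\in S$; \textbf{(C)} a trailing lone $s\in S$. The no-consecutive-$S$ hypothesis makes this decomposition unambiguous. If $B_k$ is of type (C), I absorb it into $B_{k-1}$ to form a super-block labeled $w_j^{\pm 1}s$ or $s'w_j^{\pm 1}s$, which is still a $(2,c')$--quasi-geodesic by combining (a) with the corresponding Gromov-product bounds at the two internal corners (obtained as in the proof of Lemma \ref{lem:t3paths}). With this adjustment, each block's sub-path is a $(2,c')$--quasi-geodesic of length at least $(1-\e)Dn$, which exceeds $2C_1$ with $C_1:=12(\e Dn+\delta)+c'+1$, provided $\e<1/24$ and $n$ is large.

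The core technical step is to bound the Gromov product at each interior corner $p_{l+1}$ by $C_0:=\e Dn$ (which exceeds $14\delta$ for large $n$). After translating to $x_0$, this amounts to showing $(B_l^{-1}x_0\mid B_{l+1}x_0)_{x_0}\le C_0$. Suppose for contradiction it fails; then thin triangles give pointwise $\delta$--fellow-traveling of the initial $C_0$--subsegments of the geodesics $[x_0,B_l^{-1}x_0]$ and $[x_0,B_{l+1}x_0]$. By Lemma \ref{lem:Morse} applied to the (reversed and translated) block quasi-geodesics of (a), each such initial subsegment lies uniformly close to a concatenation of the form ``(geodesic in a $\pi(S)$--translate neighborhood)$\cdot$(translate of some $\gamma_j$)'', or to just a translate of $\gamma_j$, depending on block type. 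A pigeonhole on how these two shapes align during the fellow-traveling then produces a sub-configuration of length at least $\e Dn/3$ of one of three forbidden types: a match between $\gamma_j$ and $\gamma_{j'}$ with $j\ne j'$, contradicting (d); a self-match of $\gamma_j$ by an element outside $\langle w_j\rangle=E(w_j)$, contradicting (c); or a subpath of some $\gamma_j$ in a $K$--neighborhood of a translate of $\pi(S)$, contradicting (b). The reducedness of $W$ together with (f) and the no-consecutive-$S$ condition is what guarantees that the offending matching element in the self-match case is not a power of $w_j$.

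With all interior corner Gromov products bounded by $C_0$, Lemma \ref{lem:brokenqgeod} implies that the full concatenation (after the type-(C) merge) is an $(8, 5M/2+C_1)$--quasi-geodesic. This gives the desired $(8,c)$--bound for some $c=c(n,\delta)$, and in particular the path has positive length, so $Wx_0\ne x_0$ and $W$ represents a non-trivial element. The main obstacle is the case analysis for the corner Gromov product bound, especially the (B,B) case in which both adjacent blocks contribute an $S$--step at the corner: the two quasi-geodesic shapes near $x_0$ then differ (one starts with a $\gamma$--translate and the other with a $\pi(S)$--edge), and one must carefully split the fellow-traveling into the initial ``$\pi(S)$--portion'' and the later ``$\gamma$--portion'' of the appropriate block to land in one of the three forbidden configurations; a secondary subtlety is verifying that the implicated matching element in the (A,A) case with abutting $w_j^{\pm 1}$ letters is not in $\langle w_j\rangle$, for which reducedness and (f) are crucial.
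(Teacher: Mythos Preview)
Your architecture is the paper's: split $W$ into Type-A blocks $w_i^{\pm1}$ and Type-B blocks $sw_i^{\pm1}$, bound the Gromov product at each corner via the matching estimates, and feed the result into Lemma~\ref{lem:brokenqgeod}. Two points deserve comment.

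\emph{Trailing $s$.} The paper disposes of this in one line: since any subpath of a $(8,c)$--quasi-geodesic is again $(8,c)$--quasi-geodesic, one may assume without loss of generality that $W$ does not end in $S$ (if it does, append a single $w_1$ and pass to the prefix afterwards). Your absorption into a super-block $s'w_j^{\pm1}s$ is correct in spirit, but to get the claimed $(2,c')$ constants you must case on whether each $s$-piece is long or short exactly as in the proof of Lemma~\ref{lem:t3paths}, and you do not spell this out for the three-piece block.

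\emph{The self-match ``subtlety''.} Your flagged secondary subtlety is a red herring, and the accompanying claim is actually false. In the $(A,A)$ corner with consecutive letters $w_j\cdot w_j$ (which is reduced), the terminal segment of the first block is $p\cdot\gamma_j([\ell-T,\ell])$ and the initial segment of the second is $pw_j\cdot\gamma_j([0,T])$; the matching element is $w_j$ itself, which lies in $\langle w_j\rangle$. Fortunately this is irrelevant: your condition (c), backed by Lemma~\ref{lem:gamma_nmatch}, rules out self-matches by \emph{any} $h\in G\setminus\{1\}$, so the case is handled without ever needing $h\notin\langle w_j\rangle$. Drop that clause and the argument goes through as written.
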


\begin{rem}\label{rem:c}
The constant $c$ can be chosen to be the sum of  a constant multiple of $\e Dn$ and a constant depending only on $\delta$ for any sufficiently small $\e>0$.
\end{rem}

\begin{proof}

Fix $\e>0$ and $\e>\e'>0$ sufficiently small. Let $M$ be the Morse constant for $(2, c')$--quasi-geodesics where $c'$ is given by Lemma \ref {lem:t3paths}. Note that $c'$ and $M$ can each be taken to be the sum of a constant multiple of $\e' Dn$ and a constant depending only on $\delta$ by Lemma \ref{lem:t3paths} and Lemma \ref{lem:Morsebd}, respectively. 
We assume the following are satisfied for each $1\leq i\leq k$.  Recall that $D=\min\{D_i\}$.

\begin{enumerate}[(a)]
\item  $(1-\e)Dn\leq (1-\e)D_in\leq d(x_0,w_{i}x_0)\leq(1
+\e)D_in$.
\item $\gamma_i$ has no $(\frac{\e}{2} Dn, 2\delta+2M)$--self-match.
\item $\gamma_i$ has no $(\frac{\e}{2} Dn, 2\delta+2M)$--match with $\gamma_j$ for any $j\neq i$.
\item $\gamma_i$ has no subpath of length $\frac{\e}{2} Dn$ contained in the  $(2\delta+M+\sigma)$--neighborhood of a translate of  $\pi(S)$.
\item For any $s\in S$, the path labeled by $sw_{i}$ based at $x_0$ is a $(2, c')$--quasi-geodesic.
\end{enumerate}

Properties (a) and (e) hold with probability approaching $ 1$ as $n\to\infty$ by Theorem \ref{thm:drift} and Lemma \ref{lem:t3paths}, respectively. Suppose now that $\gamma_i$ and $\gamma_j$ have a $(\frac{\e}{2} Dn, 2\delta+2M)$--match. Let $\eta$ be the corresponding subpath of $\gamma_i$, that is, $\eta$ is a subpath of length at least $\frac\e2 Dn$ and, for some $g\in G$, we have that $g\eta$ is contained in  the $(2\delta+2M)$--neighborhood of $\gamma_j$. By choosing $\e'$ small enough compared to $\e$,  we can assume that $\frac{\e}{2} Dn> 3(4\delta+2M)$ for all sufficiently large $n$. This implies that  the $(4\delta+2M)$--central segment of $\eta$ will have length at least $\frac{\e}{6} Dn$. By Lemma \ref{lem:centralseg}, the $(4\delta+2M)$--central segment of $\eta$ is contained in the $2\delta$ neighborhood of $\gamma_j$. This implies that $\gamma_i$ and $\gamma_j$ have a $(\frac{\e}{6} Dn, 4\delta)$--match. The probability that such a match exists approaches $0$ as $n\to\infty$ by Lemma \ref{lem:matching}. Hence (c) holds with probability approaching $ 1$ as $n\to\infty$. A similar argument together with Lemma \ref{lem:gamma_nmatch} and Lemma \ref{matchwithH}, respectively, shows that (b) and (d) each hold with probability approaching $ 1$ as $n\to\infty$.
 

Let $C_1=12(\e Dn+4M+\delta)+c'+1$. Note that $C_1$ is at most the sum of a constant multiple of $\e Dn$ and a constant multiple of $\delta$, hence by choosing $\e$  sufficiently small, we have $(1-\e) Dn>2C_1$ for all sufficiently large $n$. We assume this holds, and also that $\e Dn+4M>14\delta$.

If a path labeled by a word $U$ is a quasi-geodesic, the same is true for any subword of $U$. Hence we can assume without loss of generality that the last letter of $W$ is not an element of $S$. Let $q$ be a path labeled by $W$ in $X$ based at $x_0$, and let $q=q_1\cdot\dots \cdot q_m$ such that $q_l$ is one of the following types for each $1\leq l\leq m$:

{\bf Type 1:} $q_l$ is a subpath labeled by $w_{i}^{\pm 1}$ for some $1\leq i\leq k$.

{\bf Type 2:} $q_l$ is a subpath labeled by $sw_{i}^{\pm 1}$, for some $1\leq i\leq k$ and some $s\in S$.

 Note that type 1 subpaths are geodesics of length at least $2C_1$ and type 2 subpaths are $(2, c')$--quasi-geodesics of length at least $2C_1$.

Let $x_{l-1}=(q_l)_-$ for $l=1,\dots, m$, and let $x_m=q_+$.  We will show that for all $1\leq l<m$ the Gromov products satisfy 

\begin{equation}\label{eqn:Gprodbd}(x_{l-1}\mid x_{l+1})_{x_l}< \e Dn+4M.\end{equation}

Assume for the sake of contradiction that $(x_{l-1}\mid x_{l+1})_{x_l}\geq \e Dn+4M$ for some $1\leq l<m$. Then the terminal subpath of $[x_{l-1}, x_l]$ of length $\e Dn+4M$ and the initial subpath of $[x_l, x_{l+1}]$ of length $\e Dn+4M$ are contained in the $\delta$--neighborhoods of each other. Since $d_{Hau}([x_{l-1}, x_l], q_l)\leq M$, this implies that the terminal subpath of $q_l$ of length $\e Dn+3M$ and the initial subpath of $q_{l+1}$ of length $\e Dn+3M$ are contained in the $(\delta+2M)$--neighborhoods of each other. We will use matching estimates to show that this is a contradiction for each possible label of $q_l$ and $q_{l+1}$.
We say that $x_l$ has \emph{type $(a, b)$} if $q_l$ has type $a$ and $q_{l+1}$ has type $b$.

{\bf Case 1:} $x_l$ has type $(1, 1)$ or $(2, 1)$. 

In both of these cases, the terminal subpath of $q_l$ is a translate of some $\gamma_{i}^{\pm 1}$ and  the initial subpath of $q_{l+1}$ is a translate of some $\gamma_j^{\pm 1}$. Hence $\gamma_i$ and $\gamma_j$ have a $(\e Dn+3M, \delta+2M)$--match, contradicting either (b) or (c) depending on whether $i=j$ or $i\neq j$.

{\bf Case 2:} $x_l$ has type $(1, 2)$ or $(2, 2)$.

Let $q_{l+1}=\beta_1\cdot\beta_2$, where $\beta_1$ is labeled by an element $s\in S$ and $\beta_2$ is labeled by $w_{i}^{\pm 1}$  (see Figure 2). Let $\eta$ be the initial subsegment of $[x_l, x_{l+1}]$ of length $\e Dn+4M$. Recall that this means that the terminal subpath of $q_l$ of length $\e Dn+3M$ is contained in the $(\delta+M)$--neighborhood of $\eta$. For all sufficiently large $n$, this terminal subpath of $q_l$ must be a subpath of a translate of some $\gamma_j$.

It follows from applying thin triangles (Definition \ref{def:thintriangles}) that $\eta$ can be decomposed as $\eta=\eta'\cdot\eta''$, where $\eta'\subseteq \mc{N}_{\delta}(\beta_1)$ and $\eta''\subseteq \mc{N}_{\delta}(\beta_2)$, with $\eta''$ possibly empty. Suppose that $\l(\eta')\geq \frac12\l(\eta)=\frac\e2 Dn+2M$. Then the terminal subpath of $q_l$ of length $\frac\e2 Dn+M$  is contained  in the $(2\delta+M+\sigma)$--neighborhood of a translate of $\pi(S)$, contradicting (d).

Now suppose that $\l(\eta')< \frac12 \l(\eta)$, in which case $\l(\eta'')\ge\frac\e2 Dn+2M$. Then  the terminal subpath of $q_l$ has a subpath of length $\frac\e2 Dn$ which is contained in the $(2\delta+M)$--neighborhood of a subpath of $\beta_2$, which contradicts either (b) or (c) depending on whether $i=j$ or $i\neq j$.

\begin{figure}
\def\svgwidth{2.5in}  
  \centering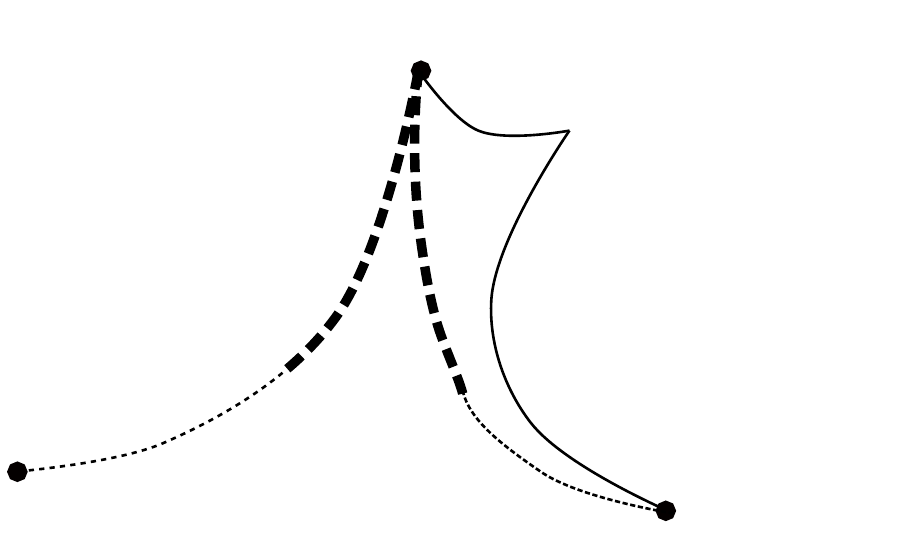 \\
	\caption{Case 2, when $x_l$ has type (2,2).  The solid paths are $q_l$ and $q_{l+1}=\beta_1\cdot\beta_2$, and the dotted paths are the geodesics $[x_{l-1},x_l]$ and $[x_l,x_{l+1}]$.  The bold subgeodesics have length at least $\e Dn+4M$ and are within $\delta$ of each other, leading to a contradiction.} 
	\label{fig:CaseB}
\end{figure}

We have shown that (\ref{eqn:Gprodbd}) holds. Hence we may apply Lemma \ref{lem:brokenqgeod} with $\lambda=2$, $c=c'$, $C_0=\e Dn+4M$, and $C_1=12(C_0+\delta)+c'+1$, which gives that $q$ is a $(8, \frac52M+C_1)$--quasi-geodesic.

If $W$ consists of a single letter $s$ for some $s\in S$, then $s\neq 1$ by assumption. So we may suppose $W$ contains $w_{i}$ for some $i$.  By assumption,  $\l(q)\geq\l(\gamma_i)\geq (1-\e)Dn$.  Thus

\[
d(x_0, x_m)\geq\frac1{8}\l(q)-\frac52M-C_1\geq\frac1{8}\left((1-\e)Dn-\frac52M-C_1\right),
\]
 which is positive for all sufficiently large $n$.  It follows that $d(x_0, x_m)>0$, and so $W$ represents a non-trivial element of $G$.
\end{proof}

We now prove  Theorem \ref{thm:main1}, whose statement we recall for the convenience of the reader.

\main*

\begin{proof}

Assume first that $E(G)=\{1\}$; in this case, the theorem follows easily from Theorem \ref{t:words}.

Now consider the case where $E(G)\neq 1$. Let $\overline{G}=G/E(G)$, and let $\overline{H}$, $\overline{f}$ be the images of $H$ and $f$ in $\overline{G}$. Note that $E(\overline{G})=\{1\}$. Let $\overline{X}$ be the quotient of $X$ obtained by identifying $x$ and $y$ whenever there exists $g\in E(G)$ such that $gx=y$. Since $E(G)$ acts trivially on $\overline{X}$ there is an induced action of $\overline{G}$ on $\overline{X}$. If the action of $G$ on $X$ is cobounded then the quotient map $X\to\overline{X}$ is $G$--equivariant quasi-isometry. In general, we will have that that  the restriction of the quotient map to any $G$-orbit is a quasi-isometry onto its image, which will be sufficient to imply that $\overline{H}$ will be quasi-convex in $\overline{X}$ and $\overline{f}$ will be a loxodromic, WPD element transverse to $\overline{H}$.

Let $\overline{\mu_i}$ be the push-forward of $\mu_i$ to $\overline{G}$ for each $1\leq i\leq k$.  It is clear that each $\overline{\mu_i}$ is a permissible probability distribution on $\overline{G}$. If $R$ is a random subgroup of $G$ with respect to $\mu_1,\dots, \mu_k$, then its image $\overline{R}$ in $\overline{G}$ is a random subgroup of $\overline{G}$ with respect to $\overline{\mu_1},\dots,\overline{\mu_k}$. We have shown that $\langle\overline{H}, \overline{R}\rangle\cong\overline{H}\ast\overline{R}$, and this subgroup is quasi-convex in $\overline{X}$. Taking pre-images, we see that \begin{equation}\label{eqn:amalprod}\langle HE(G), RE(G)\rangle\cong HE(G)\ast_{E(G)}RE(G).\end{equation} Since $R$ is torsion-free, we have $R\cap E(G)=\{1\}$.  If we also assume that $H\cap E(G)=\{1\}$, then the subgroup of the amalgamated product (\ref{eqn:amalprod}) generated by $H$ and $R$ will be the free product of $H$ and $R$. Moreover, since the map $X\to\overline{X}$ is a $G$--equivariant quasi-isometry on $G$-orbits, $\langle H, R\rangle$ will be quasi-convex in $X$ and quasi-isometrically embedded in $X$ whenever $H$ is quasi-isometrically embedded in $X$.

\end{proof}

\begin{rem}\label{finnormsub}
The assumption that $H\cap E(G)=1$ is necessary to obtain a free product $H\ast R$, since for any $g\in G$ and any $k\in E(G)$, $k$ conjugated by $g^n$ is equal to $k$ for $n=|E(G)|!$. However, if this assumption is dropped, then it is clear from the proof that the subgroup $\langle H, R \rangle$ is still quasi-convex in $X$ and there is an isomorphism $\langle H, R, E(G)\rangle\cong HE(G)\ast_{E(G)} RE(G)$. 

\end{rem}

We end this section by proving Theorem \ref{main:eq}.

\maineq*

\begin{proof}
Let $W\in G\ast \mathbb F(x_1,\dots, x_k)$, which we identify with the normal form of $W$ consisting of a reduced word in the alphabet $G\cup \{x_1,\dots, x_k\}$ which does not contain consecutive letters from $G$. Let $g_1,\dots, g_l$ be the elements in $G$ which appear in $W$, and set $S=\{g_1,\dots,g_l\}$. Since $S$ is finite, the orbit $\pi(S)$ is bounded and hence quasi-convex in $X$.  Thus any loxodromic WPD element of $G$ will be transverse to $S$, and we can apply Theorem \ref{t:words} to conclude that  $W(w_1,\dots,w_k)\neq 1$ with probability going to 1 as $n\to\infty$.  
\end{proof}


\section{Geometric Separation}\label{sec:geomsep}
In this section, we prove Theorem \ref{thm:geomsep}.  We first recall the definition of geometric separation.
\begin{defn}\label{defn:geomsep}
Let $G$ be a group acting on a metric space $X$. The subgroup $H\leq G$ is  \emph{geometrically separated (in $X$)} if for all $\kappa\geq 0$, there exists $C>0$ such that for all $g\in G\setminus H$, 
\[
\diam(\pi(H)\cap \mc N_\kappa(g\pi(H)))<C.
\]
We refer to $C=C(\kappa)$ as the geometric separation constant.
\end{defn}

\geomsep*

\begin{proof}

Assume for now that $E(G)=\{1\}$.  Let $w_1,\dots, w_k$ by given by random walks of length $n$ with respect to $\mu_1,\dots,\mu_k$ respectively, and let $\gamma_i=[x_0, w_ix_0]$ and $R=\langle w_1,\dots, w_k\rangle$. Let $D_i$ denote the drift of $\mu_i$ and let $D=\min\{D_i\}$. Fix constants $\e>0$, $\e'>0$ chosen sufficiently small relative to $\e$, and $\e''>0$ chosen sufficiently small relative to $\e'$. Assume $\pi(H)$ is $\sigma$--quasi-convex in $X$. Assume that  $H_n:=\langle H, R\rangle\cong H\ast R$, which occurs with probability approaching one by Theorem \ref{thm:main1}. Let $M$ be the Morse constant for $(8, c)$--quasi-geodesics, where $c$ is given by applying Theorem \ref{t:words}. By remark \ref{rem:c}, we can assume that $c$ is a constant multiple of $\e'' Dn$ plus a constant multiple of $\delta$. By Lemma \ref{lem:Morsebd}, the same is true of $M$. In addition, we will assume that the following hold for each $1\leq i\leq k$.
\begin{enumerate}[(a)]
\item $\gamma_i$ has no $(\e' Dn, 6\delta+6M)$--self-match.
\item $\gamma_i$ has no $(\e' Dn, 6\delta+6M)$--match with $\gamma_j$ for any $j\neq i$.
\item $\gamma_i$ has no subpath of length $\e' Dn$  contained in the  $(6\delta+6M+\sigma)$--neighborhood of a translate of  $\pi(H)$.
\item $(1-\e)Dn\leq (1-\e)D_in\leq d(x_0,w_{i}x_0)\leq(1+\e)D_in$.
\end{enumerate}

Property (d) holds with probability going to 1 as $n\to\infty$ by Theorem \ref{thm:drift}. By choosing $\e''$ sufficiently small relative to $\e'$, it can be shown that (a)--(c) hold with probability going to 1 as $n\to\infty$ in the same way as in the proof of Theorem \ref{t:words}.


Let $L=\max\{\l(\gamma_i)\}$, and note that $L\geq (1-\e)Dn$ by (d). Our goal is to prove that $H_n$ is geometrically separated in $X$. Fix $\kappa\geq 0$, and let $C'$ be the constant such that if
\[
\diam(\pi(H)\cap \mc N_\kappa(g\pi(H)))>C', 
\]
then $g\in H$; the constant $C'$ exists as $H$ is geometrically separated in $X$. Now let $C=3\max\{L, C''\}+2(\kappa+2\delta)+2M$, where $C''$ will be specified later. Suppose there exists $g\in G$ such that \[\operatorname{diam}(\pi(H_n)\cap \mc{N}_\kappa(g\pi(H_n))\geq C.\]

Then there exist points $x_1,y_1\in \pi(H_n)$ with $d(x_1, y_1)\geq C$ and points $x_2,y_2\in \pi(g(H_n))$ with $d(x_1, x_2)\leq\kappa$ and $d(y_1, y_2)\leq\kappa$. By Theorem \ref{t:words}, there are $(8, c)$--quasi-geodesics $p_1$ from $x_1$ to $y_1$ and $p_2$ from $x_2$ to $y_2$ that are labeled by $H\cup\{w_1,\dots, w_k\}$.

The $(\kappa+2\delta)$--central segment of  the geodesic $[x_1,y_1]$ has length at least  $C-2(\kappa+2\delta)$ and is contained in the $2\delta$--neighborhood of $[x_2, y_2]$ by Lemma \ref{lem:centralseg}. There is a subpath $q_1$ of $p_1$ which is contained in the $M$--neighborhood of the $(\kappa+2\delta)$--central segment of $[x_1, y_1]$ with $\l(q_1)\geq C-2(\kappa+2\delta)-2M$. Hence $q_1\subseteq \mc{N}_{2\delta+2M}(p_2)$.

By assumption, $\l(q_1)\geq 3L$, hence there is a subpath $\eta$ of $q_1$ such that either $\eta$ is a translate by an element of $H_n$ of some $\gamma_i^{\pm 1}$, or $\eta$ is contained in a subpath labeled by an element of $H$ and $\l(\eta)\geq \frac13 \l(q_1)\geq \max\{L, C''\}$. In either case, we conclude that $\eta$ is a geodesic with $\l(\eta)\geq (1-\e)Dn$.

 Let $\beta$ be the subpath of $p_2$ such that $d(\eta_-, \beta_-)\leq 2\delta+2M$ and $d(\eta_+, \beta_+)\leq 2\delta+2M$. Hence $\l(\beta)\geq d(\beta_-, \beta_+)\geq d(\eta_-, \eta_+)-4\delta-4M=\l(\eta)-4\delta-4M\geq (1-\e)Dn-4\delta-4M$.  Note also the $d_{Hau}(\eta, \beta)\leq 4\delta+4M$.

We now choose a subpath $\beta'$ of $\beta$. If $\beta$ contains a translate of some $\gamma_j^{\pm 1}$, then we let $\beta'$ be this translate. Note that $\beta'=gh'\gamma_j$ for some $h'\in H_n$ in this case. If no such translate exists, then $\beta$ must have a subpath $\beta'$ such that $\l(\beta')\geq\frac13\l(\beta)$ and either $\beta'$ is contained in a translate by an element of $gH_n$ of some $\gamma_j^{\pm1}$ or $\beta'$ is contained in a subpath labeled by an element of $H$. In the first case we get $\l(\beta')\geq(1-\e)Dn$, and in the second $\l(\beta')\geq\frac13\l(\beta)\geq \frac{1}{3}((1-\e)Dn-4\delta-4M)$. 
Let $\eta'$ be the subpath of $\eta$ such that $d(\eta'_-, \beta'_-)\leq 4\delta+4M$ and $d(\eta'_+, \beta'_+)\leq 4\delta+4M$. Note that

\[
\l(\eta')\geq d(\beta'_-, \beta'_+)-8\delta-8M=\l(\beta')-8\delta-8M.
\]
 We also have $d_{Hau}(\eta', \beta')\leq6\delta+6M$. Recall that $M$ is a constant multiple of $\e'' Dn$ plus a constant multiple of $\delta$ and $\e'' Dn<\e Dn$. Hence we can choose $\e'$ such that for sufficiently large $n$, $\beta'$ and $\eta'$ both have length at least $\e'Dn$. We also choose $C''$ such that in the case where $\eta'$ and $\beta'$ are both contained in subpaths labeled by elements of $H$, we have $d(\eta'_-,\eta'_+)\geq C'$.

There are four possible cases, depending on the form of $\eta'$ and $\beta'$.

\noindent {\bf Case 1.} Suppose that $\eta'$ is contained in $h\gamma_i^{\pm 1}$ for some $h\in H_n$ and $\beta'$ is contained in $gh'\gamma_j$ for some $h'\in H_n$. The only way this does not give a contradiction with either (a) or (b) is if $i=j$ and $h^{-1}gh'=1$, hence $g\in H_n$ in this case.

\noindent{\bf Case 2.} Suppose that $\eta'$ is contained in $h\gamma_i^{\pm 1}$ for some $h\in H_n$ and $\beta'$ is contained in a subpath labeled by an element of $H$. Since $H$ is $\sigma$--quasi-convex, this implies that $\eta'$ is contained in the $(6\delta+6M+\sigma)$--neighborhood of a translate of $\pi(H)$. But this is a contradiction with (c).

\noindent{\bf Case 3.} Suppose $\eta'$ is contained in a subpath labeled by an element of $H$  and $\beta'$ is contained in a translate of some $\gamma_j^{\pm}$. In this case we get the same contradiction as in Case 2.

\noindent{\bf Case 4.} $\eta'$ and $\beta'$ are both contained in subpaths labeled by elements of $H$. Since $H$ is $\sigma$-quasi-convex, $\eta'$ is contained in the $\sigma$--neighborhood of $h\pi(H)$ and $\beta'$ is contained in the $\sigma$--neighborhood of $gh'\pi(H)$ for some $h, h'\in H_n$. By our choice of $C''$, we have $\diam(\pi(H)\cap\mc{N}_{6\delta+6M+2\sigma}h^{-1}gh'\pi(H))\geq C'$, which implies that $h^{-1}gh'\in H$ and hence $g\in H_n$.

Finally, we consider the case where $E(G)\neq 1$. We let $\overline{G}$, $\overline{H}$, $\overline{R}$, and $\overline{X}$ be as in the proof of Theorem \ref{thm:main1}. As in the proof of Theorem \ref{thm:main1}, we can apply the above proof to obtain that $\langle \overline{H}, \overline{G}\rangle$ is geometrically separated in $\overline{X}$ and then take pre-images to get that $\langle H, R, E(G)\rangle$ is geometrically separated in $G$. 
\end{proof}

We next discuss how Theorem \ref{thm:geomsep} applies to the theory of hyperbolically embedded subgroups introduced in \cite{DGO}. We will not need the full definition of a hyperbolically embedded subgroup here, since we will only make use of the following criteria:

\begin{thm}[{\cite[Theorem 4.2]{DGO}}]\label{thm:hypembed}
Suppose that $G$ acts on a hyperbolic space $X$ such that $H$ is quasi-isometrically embedded and geometrically separated in $X$. Then $H$ hyperbolically embeds in $G$.
\end{thm}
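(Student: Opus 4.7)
The plan is to directly verify the DGO definition of hyperbolic embedding: produce a symmetric subset $Y\subseteq G$ so that $G=\langle Y\cup H\rangle$, show that the relative Cayley graph $\Gamma(G,Y\sqcup H)$ is hyperbolic, and show that the induced relative metric $\hat d_H$ on $H$ is proper (every $\hat d_H$-ball is finite). To pick $Y$, fix a basepoint $x_0\in X$ and take any symmetric generating-type set $Y\subseteq G$ such that $Y\cdot x_0$ is bounded in $X$ and $Y\cup H$ generates $G$; for instance one may take all elements moving $x_0$ by at most some large constant $R$, restricted to a subset that together with $H$ still generates $G$.

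The hyperbolicity of $\Gamma(G,Y\sqcup H)$ is best obtained by comparison with a geometric model. Form the coned-off space $\dot X$ from $X$ by attaching, for every coset $gH$, a cone point $v_{gH}$ joined by edges of length $\tfrac12$ to each vertex of $g\pi(H)$. Because $H$ is quasi-isometrically embedded in $X$ the orbits $g\pi(H)$ form a $G$-invariant family of uniformly quasi-convex subspaces, and coning off such a family in a hyperbolic space yields a hyperbolic space (a standard Bowditch/Farb electrification argument; alternatively one invokes the fact that quasi-convex subsets have bounded projections, which survives the coning operation). The orbit map $g\mapsto gx_0$ extends $G$-equivariantly to a Lipschitz map $\Gamma(G,Y\sqcup H)\to\dot X$ which is a quasi-isometry because boundedness of $Y\cdot x_0$ controls the $Y$-edges and each $H$-coset edge maps into a single cone star. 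Hyperbolicity transfers back to $\Gamma(G,Y\sqcup H)$.

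For the properness of $\hat d_H$, fix $n$ and suppose $h\in H$ satisfies $\hat d_H(1,h)\le n$: there is a path $p$ in $\Gamma(G,Y\sqcup H)$ from $1$ to $h$ of length at most $n$ using no edge from $H$ itself. Decompose $p$ into maximal subpaths, each either a $Y$-segment or an edge labelled by an element of some coset $gH$ with $g\notin H$. Map $p$ to $X$ via the orbit map and replace each $Y$-edge by a geodesic of bounded length and each $H$-coset edge by the corresponding geodesic in $X$, whose endpoints lie in one coset $g_iH\cdot x_0$. Because each cone star has diameter $1$ in $\dot X$, the image of $p$ in $\dot X$ has length at most $n+O(1)$, so in $X$ the path visits at most $n+O(1)$ distinct cosets $g_iH$. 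On each such coset the path traverses a subsegment whose endpoints lie within a uniformly bounded distance $\kappa$ of both $\pi(H)$ (the starting coset) and of $g_i\pi(H)$; since $g_i\notin H$, the geometric-separation constant $C(\kappa)$ bounds this subsegment's diameter. Adding the bounded $Y$-contributions, we get $d_X(x_0,hx_0)\le n\cdot(C(\kappa)+\mathrm{diam}(Y\cdot x_0))$. Quasi-isometric embedding of $H$ then forces $h$ to lie in a finite subset of $H$, as required.

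The main obstacle is the properness step: one must carefully convert $\hat d_H$-length into genuine progress in $X$, and show that each "excursion" along a foreign coset contributes only a bounded amount. This is where geometric separation does all the work, since an excursion of the form "leave $H$ along $g_iH$ and return to $H$" corresponds precisely to a segment of $\pi(H)$ within the $\kappa$-neighborhood of $g_i\pi(H)$, which is controlled by $C(\kappa)$. Handling the interplay between $Y$-edges and $H$-coset edges cleanly, and ensuring that the quasi-isometry between $\Gamma(G,Y\sqcup H)$ and $\dot X$ does not inflate the number of coset-excursions, is the only delicate bookkeeping in the argument.
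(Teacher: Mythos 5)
This statement is not proved in the paper at all: it is imported verbatim from Dahmani--Guirardel--Osin \cite[Theorem 4.2]{DGO} and used as a black box, so there is no internal argument to compare yours against. Your outline does follow the broad strategy of the original proof (cone off the cosets $g\pi(H)$, compare the coned-off space with the relative Cayley graph $\Gamma(G,Y\sqcup H)$, and use geometric separation to convert small relative length into bounded displacement in $X$), but as written it has a genuine gap at the decisive step.

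The gap is in the properness argument. You assert that each excursion of the path $p$ along a foreign coset $g_iH$ ``traverses a subsegment whose endpoints lie within a uniformly bounded distance $\kappa$ of both $\pi(H)$ \dots and of $g_i\pi(H)$.'' The endpoints of such an excursion lie in $g_i\pi(H)$ by construction, but nothing in your argument places them anywhere near $\pi(H)$: the path $p$ is an arbitrary path of length at most $n$ in $\Gamma(G,Y\sqcup H)$ minus the $H$--edges, it does not alternate between $H$ and foreign cosets, and a priori its image in $X$ can wander arbitrarily far from $\pi(H)$ (farther and farther as $n$ grows). Geometric separation only controls $\diam\bigl(\pi(H)\cap\mathcal{N}_\kappa(g\pi(H))\bigr)$ for a \emph{fixed} $\kappa$, so without a uniform $\kappa$ the bound $d_X(x_0,hx_0)\leq n\cdot\bigl(C(\kappa)+\diam(Y\cdot x_0)\bigr)$ has nothing to quantify over. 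The missing ingredient --- which is the actual content of the DGO proof --- is a fellow-travelling argument: using hyperbolicity of $X$ together with the quasi-convexity of $\pi(H)$ (which follows from the quasi-isometric embedding hypothesis via the Morse lemma), one first shows that the image of $p$ stays in a uniform neighborhood of the geodesic $[x_0,hx_0]$, which in turn lies in a uniform neighborhood of $\pi(H)$; only then do the entry and exit points of each coset excursion lie within a uniform $\kappa$ of $\pi(H)$, and only then does $C(\kappa)$ cap each excursion.

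A secondary issue: your choice of $Y$ (elements displacing $x_0$ by at most $R$) and the claim that the orbit map $\Gamma(G,Y\sqcup H)\to\dot X$ is a quasi-isometry implicitly assume the action of $G$ on $X$ is cobounded. Otherwise a relative generating set with bounded orbit need not exist, and the comparison map is not coarsely surjective, so hyperbolicity does not transfer back in the way you describe. The theorem as stated --- and as used in this paper, for instance for partially WPD actions --- makes no coboundedness assumption, so either a reduction to the cobounded case or an argument carried out directly on the orbit is required.
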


Since $H$ quasi-isometrically embeds into a hyperbolic metric space, $H$ itself must be a hyperbolic group. The converse of Theorem \ref{thm:hypembed} holds if one additionally assumes that $H$ is a proper, hyperbolic subgroup of $G$. Indeed, if $H$ is a proper hyperbolic subgroup of $G$ such that $H$ hyperbolically embeds in $G$, then $G$ has a (usually infinite) generating set $S$ such that the Cayley graph $\Cay(G, S)$ of $G$ with respect to $S$ is a hyperbolic metric space and $H$ is both quasi-isometrically embedded and geometrically separated in $\Cay(G, S)$. This follows from \cite[Theorem 6.4]{Sis12}, \cite[Lemma 3.1]{AMS}, and \cite[Lemma 3.2]{AMS}. Moreoever, \cite[Theorem 6.11]{DGO} shows that $G$ contains loxodromic WPD elements for the action of $G$ on $\Cay(G, S\cup H)$. Since the map $\Cay(G, S)\to\Cay(G, S\cup H)$ is 1-Lipschitz, these elements will also be loxodromic WPD elements for action of $G$ on $\Cay(G, S)$. Since they act loxodromically on $\Cay(G, S\cup H)$, these elements must be transverse to $H$ in $\Cay(G, S)$. This shows that Theorem \ref{thm:main1} and Theorem \ref{thm:geomsep} can be applied for any hyperbolic, hyperbolically embedded subgroup $H$ and any sequence $(\mu_i)$ of finitely supported probability distributions of full support.

Finally, we note if $G$, $H$, and $X$ are as in Theorem \ref{thm:hypembed} and, in addition, the action of $G$ on $X$ is cobounded, then the $S$ as above can be chosen such that $\Cay(G, S)$ is $G$--equivariently quasi-isometric to $X$ by \cite[Theorem 3.16]{H16} or \cite[Corollary 3.10]{AMS}.

Combining Theorems \ref{thm:main1} and \ref{thm:geomsep}, Remark \ref{finnormsub}, and the above discussion, we obtain:

\begin{cor}\label{cor:hyphypembed}
Suppose $H<G$ is hyperbolic and $H$ hyperbolically embeds in $G$. Let $(\mu_i)$ be a sequence of finitely supported probability distributions of full support. Then a random subgroup $R$ will satisfy $\langle H, R, E(G)\rangle\cong HE(G)\ast_{E(G)} RE(G)$ and $\langle H, R, E(G)\rangle$ hyperbolically embeds in $G$. 
\end{cor}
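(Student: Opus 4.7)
The plan is to reduce the hypothesis that $H$ hyperbolically embeds in $G$ to the geometric hypotheses required by Theorems \ref{thm:main1} and \ref{thm:geomsep}, and then read off the conclusion from those theorems combined with Remark \ref{finnormsub} and Theorem \ref{thm:hypembed}. The key point is that, because $H$ is assumed hyperbolic and hyperbolically embedded, the discussion immediately preceding the corollary provides a hyperbolic metric space on which $G$ acts with all the geometric features needed.

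More precisely, first I would invoke \cite[Theorem 6.4]{Sis12} together with \cite[Lemmas 3.1 and 3.2]{AMS} to produce a (possibly infinite) generating set $S$ of $G$ such that $X:=\Cay(G,S)$ is a hyperbolic metric space in which $H$ is simultaneously quasi-isometrically embedded and geometrically separated. Next, by \cite[Theorem 6.11]{DGO}, the action of $G$ on $\Cay(G, S\cup H)$ contains a loxodromic WPD element $f$; since the natural map $\Cay(G,S)\to\Cay(G,S\cup H)$ is $1$--Lipschitz and $G$--equivariant, $f$ is also loxodromic and WPD for the action on $X$, and the fact that $f$ acts loxodromically on $\Cay(G,S\cup H)$ (where $H$ is a generator) forces the $X$--axis of $f$ to have bounded intersection with every $gH$--orbit, i.e., $f$ is transverse to $H$ in the sense of Definition \ref{def:transverse}. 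Any sequence $(\mu_i)$ of finitely supported distributions of full support on $G$ is permissible with respect to this action, and $f\in G=\Gamma_{\mu_i}$ for all $i$, so the hypotheses of both Theorems \ref{thm:main1} and \ref{thm:geomsep} are met with this choice of $X$ and $H$.

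Second, I would apply Theorem \ref{thm:main1} together with Remark \ref{finnormsub}: because $H\cap E(G)$ need not be trivial, the remark supplies the amalgamated-product decomposition
\[
\langle H,R,E(G)\rangle \;\cong\; HE(G)\ast_{E(G)} RE(G),
\]
and also gives that $\langle H,R,E(G)\rangle$ is quasi-isometrically embedded in $X$, using that $H$ is quasi-isometrically embedded in $X$. Then Theorem \ref{thm:geomsep} (applied to the same data) yields that $\langle H,R,E(G)\rangle$ is geometrically separated in $X$; in fact the ``in particular'' clause of Theorem \ref{thm:geomsep} directly states that $\langle H,R,E(G)\rangle$ hyperbolically embeds in $G$, so no further appeal to Theorem \ref{thm:hypembed} is strictly necessary (although one could finish by invoking it).

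There is no serious obstacle in the argument: the entire content is bookkeeping that matches the ``hyperbolic, hyperbolically embedded'' hypothesis to the geometric setup of the main theorems. The one mildly delicate step is verifying that the loxodromic WPD element produced by \cite[Theorem 6.11]{DGO} really is transverse to $H$ in $X$, rather than merely loxodromic on the coned-off space $\Cay(G,S\cup H)$; this is the only place where one must translate between the two different Cayley graphs, and it is precisely the content of the paragraph preceding the statement of the corollary.
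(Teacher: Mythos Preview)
Your proposal is correct and follows essentially the same route as the paper: the paper's proof is literally the sentence ``Combining Theorems \ref{thm:main1} and \ref{thm:geomsep}, Remark \ref{finnormsub}, and the above discussion,'' where the ``above discussion'' is precisely the sequence of reductions (via \cite{Sis12}, \cite{AMS}, and \cite{DGO}) that you spell out. The only minor imprecision is that Remark \ref{finnormsub} asserts quasi-convexity rather than quasi-isometric embedding of $\langle H,R,E(G)\rangle$, but this is harmless since you correctly observe that the ``in particular'' clause of Theorem \ref{thm:geomsep} already delivers the hyperbolic embedding directly.
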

Note that when $H$ is infinite, $E(G)$ is a subgroup of $H$ by \cite[Theorem 6.14]{DGO}, and hence $\langle H, R, E(G)\rangle=\langle H, R\rangle$ in this case. When $E(G)=\{1\}$, we obtain that every hyperbolic, hyperbolically embedded subgroup of $G$ is a free factor in a larger hyperbolically embedded subgroup of $G$.

If $H$ hyperbolically embeds in $G$ but $H$ is not hyperbolic, then $G$ will still have a non-elementary, partially WPD action on a hyperbolic metric space $X$ with $H$ elliptic \cite{DGO}. In this case $H$ must be infinite, so $E(G)\leq H$ and hence $\langle H, R, E(G)\rangle=\langle H, R\rangle$. Theorem \ref{thm:main1} will then imply that for any sequence of finitely supported probability distributions of full support, a random subgroup $R$ will satisfy $\langle H, R\rangle\cong H\ast_{E(G)}RE(G)$. However we do not know the answer to the following:
\begin{ques}
Suppose $H$ hyperbolically embeds in $G$, but $H$ is not hyperbolic. If $R$ is a random subgroup of $G$, does $\langle H, R\rangle$ hyperbolically embed in $G$?
\end{ques}


\section{Stable subgroups}\label{sec:stablesubgp}

In   Section \ref{sec:App}, we will describe several situations in which Theorem \ref{thm:main1} applies. In order to apply this theorem we need to be able to find a loxodromic WPD element transverse to $H$.  In this section, we prove a general algebraic statement about stable subgroups of acylindrically hyperbolic groups which will be used in the next section in order to find the transverse elements. When $G$ is hyperbolic this statement is exactly equivalent to the existence of a transverse element, while in the other cases there is still  further work to be done.

When $Y$ is a subset of a geodesic metric space $X$, we say $Y$ is \emph{M--Morse} if for all $\lambda, c$, there exists $M=M(\lambda, c)$ such that any $(\lambda, c)$--quasi-geodesic in $X$ with endpoints in $Y$ is contained in $\mc N_M(Y)$. 

Let $G$ be generated by a finite set $S$. A finitely generated subgroup $H\leq G$ is called \emph{stable} if $H$ is quasi-isometrically embedded in $\Cay(G, S)$ and for all $\lambda, c$, there exists $R=R(\lambda, c)$ such that if $p$ and $q$ are $(\lambda, c)$--quasi-geodesics in $\Cay(G, S)$ with equal endpoints in $H$, then $p\subseteq \mc{N}_R(q)$. This definition is due to Durham--Tayor \cite{DT} and  is equivalent to requiring that $H$ is a hyperbolic group and $H$ is a Morse subset of $\Cay(G, S)$ \cite{CorHum}. It is easy to see that this notion is independent of the choice of finite generating sets for $G$ and $H$.

The following is the main theorem of this section. 
\begin{thm} \label{thm:stablesubgroup}
Let $H$ be an infinite index stable subgroup of a finitely generated group $G$. Suppose $G$ has a non-elementary WPD action on a hyperbolic space $X$. Then there exists a loxodromic element $f$ such that  $H^g\cap\langle f\rangle=\{1\}$ for all $g\in G$. 
\end{thm}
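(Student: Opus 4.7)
The plan is to use the acylindrical hyperbolicity of $G$ (which follows from the non-elementary WPD action on $X$) to produce a loxodromic WPD element $f$ avoiding all conjugates of $H$. I would split into two cases based on whether $H$ contains an element acting loxodromically on $X$.

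The easy case is when $H$ contains no element acting loxodromically on $X$: then for any loxodromic element $f \in G$, all nonzero powers $f^n$ are loxodromic, so if $f^n \in H^g$, then $g^{-1}f^ng \in H$ would be a loxodromic element of $H$, contradicting the case assumption. Hence $\langle f \rangle \cap H^g = \{1\}$ for any loxodromic $f$.

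The substantive case is when $H$ contains a loxodromic element of the action on $X$. Here I would proceed by a ping-pong construction. By \cite[Lemma 6.5]{DGO}, produce two loxodromic WPD elements $a_1, a_2 \in G$ with $E(a_1) \cap E(a_2) = \{1\}$. For sufficiently large $N$, the subgroup $F := \langle a_1^N, a_2^N \rangle$ is free of rank two, every nontrivial element is loxodromic WPD on $X$, and the translation length $\tau(w)$ of an element $w \in F$ grows linearly with its word length in $F$. Consider the family $f_n := a_1^N (a_2^N)^n$ for $n \geq 1$; then $\tau(f_n) \to \infty$. Suppose for contradiction that for each large $n$ there exist $g_n \in G$, $m_n \neq 0$, and $h_n \in H$ with $f_n^{m_n} = g_n h_n g_n^{-1}$. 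Each $h_n \in H$ acts loxodromically on $X$ with translation length $|m_n|\tau(f_n) \to \infty$.

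Now invoke stability: since $H$ is Morse and quasi-isometrically embedded in $\Cay(G, S)$, the conjugacy data $(g_n, h_n)$ is controlled. A quasi-axis for $f_n^{m_n}$ in $\Cay(G, S)$ maps under conjugation by $g_n^{-1}$ to a quasi-axis for $h_n$ lying in a bounded neighborhood of $H$. The Morse property of $H$ constrains the $g_n$ to lie in bounded neighborhoods of certain cosets. Combining this with the WPD control on how much $f_n$'s axis in $X$ can be almost-fixed by multiple distinct cosets, a careful geometric comparison, likely via \cite[Lemma 4.2]{DowTay} (suggested to the authors and used in the proof of Theorem \ref{thm:outfn}), forces the translation lengths of the $h_n$ to remain bounded, contradicting $|m_n|\tau(f_n) \to \infty$.

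I expect the main obstacle to be this last geometric comparison. Stability is formulated in the Cayley graph metric on $G$, while loxodromicity, translation length, and the WPD property live in $X$, and these two metrics on $G$ need not be bi-Lipschitz. The crux is extracting enough compatibility from stability, essentially showing that the Morse condition behaves like a hyperbolically-embedded condition in this context, to prevent the axes of arbitrarily translating loxodromics from being swallowed by conjugates of $H$.
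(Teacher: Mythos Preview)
Your easy case is fine, but the hard case has a genuine gap. The proposed contradiction --- that the Morse property of $H$ should force the translation lengths $\tau(h_n)$ to remain bounded --- is simply false as stated: once $H$ contains a single loxodromic element, $H$ contains elements of arbitrarily large translation length in $X$, so there is no a priori bound on $\tau(h_n)$. Nothing in your sketch explains why the particular elements $h_n = g_n^{-1} f_n^{m_n} g_n$ should have bounded translation length; the hand-wave about ``WPD control on how much $f_n$'s axis can be almost-fixed by multiple distinct cosets'' does not supply such a bound. (Also, the citation of \cite[Lemma 4.2]{DowTay} is misplaced: in the paper that lemma is used only for $\Out(\mathbb F_n)$, to compare projections in outer space and in $\mc{FF}_n$; it plays no role in the proof of this theorem.)

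The paper's argument is structurally quite different and relies on two properties of stable subgroups that you never invoke: \emph{finite width} and \emph{finite height} \cite{AMST}. The proof proceeds in stages: first (using finite width) one finds a loxodromic avoiding the single conjugate $H$ itself; then one upgrades this to avoid any \emph{finite} collection of conjugates $H^{g_1},\dots,H^{g_s}$ by building products $y_1^n\cdots y_s^n$ and exploiting that the corresponding words are quasi-geodesics in $\Cay(G,S)$, so the Morse property of each $H^{g_j}$ traps them in a bounded $d_S$-neighborhood and a pigeonhole argument gives a contradiction. For the final step, assume for contradiction that every loxodromic has a power in some conjugate of $H$; fix one loxodromic $h$ with $h^{l_0}\in H^{g_0}$, and for an arbitrary loxodromic $y$ analyze the elements $(y^nh^n)^{l_n}\in H^{g_n^{-1}}$. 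The Morse property of $H$ in $\Cay(G,S)$ plus pigeonhole produces a single conjugator $a=a(y)$ and exponent $l=l(y)$ with \emph{both} $ay^la^{-1}\in H$ and $ah^la^{-1}\in H$. Since powers of $h$ now lie in every $H^{a(y)^{-1}}$, finite height forces only finitely many distinct $a(y)$ to appear --- contradicting the ``finitely many conjugates'' step. The linchpin is thus finite height, not any bound on translation length.
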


This is a generalization of \cite[Proposition 1]{Minasyan2}, where it is proved in the case that $G$ is hyperbolic and $X=\Cay(G, S)$. As in \cite[Proposition 1]{Minasyan2}, the element $f$ can be chosen to belong to any subgroup $K$ such that the intersection of $K$ and any conjugate of $H$ is an infinite index subgroup of $K$, as long as $K$ contains at least one loxodromic WPD element (this is automatic when $K$ is an infinite subgroup of a hyperbolic group). We  note that if $H$ is stable in $G$ and $G$ is not hyperbolic, then $H$ is necessarily infinite index.

 As in the case of quasi-convex subgroups of hyperbolic groups, we will use the fact that stable subgroups have finite width and finite height \cite[Theorem 1.1]{AMST}. Recall that $H$ has \emph{width $\leq n$ in $G$} if for any distinct cosets $g_1H,\ldots,g_nH$, there exist $1\leq i<j\leq n$ such that $H^{g_i}\cap H^{g_j}$ is finite. $H$ has \emph{height $\leq n$} if for any distinct cosets $g_1H,\ldots,g_nH$, the intersection $H^{g_1}\cap\ldots\cap H^{g_n}$ is finite.


\begin{rem}
The proof of Theorem \ref{thm:stablesubgroup} will hold for any infinite index subgroup $H$ of finite height and finite width such that $H$ is a Morse subset of $\Cay(G, S)$.
\end{rem}

For the proof of Theorem \ref{thm:stablesubgroup} we will more or less follow the same steps as the proof of \cite[Proposition~1]{Minasyan2}. Throughout the rest of this section, we fix a group $G$ with a non-elementary action on a hyperbolic space $X$. For now we will only assume that the action of $G$ on $X$ contains WPD elements, though in the final step of the proof we will need the assumptions that the action is WPD, that is, \emph{all} loxodromic elements are WPD.

 We also fix a finite generating set $S$ for $G$ and let $d_S$ denote the corresponding word metric on $G$. We will use $d_X$ to denote the metric on $X$.

\begin{lem}\label{lem:oneconj}
Let $H$ be an infinite index stable subgroup of $G$. Then there exists a loxodromic WPD element $f$ such that $H\cap\langle f\rangle=\{1\}$.
\end{lem}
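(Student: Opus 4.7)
The plan is to argue by contradiction using the fact, due to Antolin--Mj--Sisto--Taylor \cite{AMST}, that stable subgroups have finite height. Suppose for contradiction that every loxodromic WPD element $f$ of $G$ satisfies $H \cap \langle f \rangle \neq \{1\}$, i.e., has some nonzero power lying in $H$. Fix one loxodromic WPD element $f_1 \in G$, which exists by the partial WPD hypothesis on the action of $G$ on $X$.

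For any $g \in G$, the conjugate $g f_1 g^{-1}$ is again loxodromic (since $G$ acts by isometries) and again WPD (the WPD condition is easily checked to be conjugation-invariant, as $g f_1 g^{-1}$ has axis $g\alpha_{f_1}$ and the elements almost fixing two far-apart points on $g\alpha_{f_1}$ biject with those for $f_1$ via $h \mapsto g^{-1} h g$). By the contradiction hypothesis, there is an integer $m_g \neq 0$ with $(g f_1 g^{-1})^{m_g} = g f_1^{m_g} g^{-1} \in H$, equivalently $f_1^{m_g} \in g^{-1} H g = H^g$. Since $g$ ranges over all of $G$, this shows every conjugate of $H$ contains a nontrivial power of $f_1$.

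By \cite[Theorem 1.1]{AMST}, the stable subgroup $H$ has finite height; say height $\leq n_0$. Since $H$ has infinite index in $G$, we may choose $n_0$ pairwise distinct left cosets $g_1 H, \ldots, g_{n_0} H$. For each $i$ there is some $m_i \neq 0$ with $f_1^{m_i} \in H^{g_i}$. Setting $M = \mathrm{lcm}(|m_1|, \ldots, |m_{n_0}|)$, we have $f_1^M \in H^{g_i}$ for each $i$, so
\[
f_1^M \in H^{g_1} \cap \cdots \cap H^{g_{n_0}}.
\]
This intersection is finite by the finite height hypothesis, but $f_1$ has infinite order (being loxodromic), so $f_1^M$ has infinite order, a contradiction.

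The key observation is that conjugation-invariance of loxodromic and WPD properties lets a single element $f_1$ witness the failure in every conjugate of $H$ simultaneously; the main technical input is finite height of stable subgroups from \cite{AMST}, which is precisely where stability enters. I expect no serious obstacle once these two ingredients are in place: the remainder is a pigeonhole argument exploiting that any finite family of infinite subgroups of the cyclic group $\langle f_1\rangle$ has infinite common intersection.
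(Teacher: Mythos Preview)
Your proof is correct but takes a genuinely different route from the paper's. The paper argues directly in two cases: if $H$ contains no loxodromic WPD element, then any loxodromic WPD $f$ works (since every nonzero power of $f$ is again loxodromic WPD and hence cannot lie in $H$); if $H$ does contain a loxodromic WPD element $h$, then finite \emph{width} of $H$ (also from \cite{AMST}) produces some $g\in G$ with $H^g\cap H$ finite, and one takes $f=g^{-1}hg$, so that $\langle f\rangle\cap H\subseteq H^g\cap H$ is a finite subgroup of an infinite cyclic group, hence trivial. Your argument instead proceeds by contradiction and uses finite \emph{height}: the observation that conjugates of a fixed loxodromic WPD element $f_1$ remain loxodromic WPD forces a nontrivial power of $f_1$ into every conjugate $H^g$, after which finite height together with an lcm yields the contradiction. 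The paper's approach is shorter and more constructive --- in the interesting case it exhibits $f$ explicitly as a conjugate of an element of $H$ --- while your approach is a clean contradiction argument that, incidentally, is very close in spirit to the final step of the paper's proof of Theorem~\ref{thm:stablesubgroup}, where finite height is exploited in essentially the same way.
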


\begin{proof}
Since any power of a loxodromic WPD element is again a loxodromic WPD element, if $H$ contains no such elements then the statement is obvious. Suppose now that $H$ does contain a loxodromic WPD element $h$. Since $H$ has infinite index and finite width, there exists some $g\in G$ such that $H^g\cap H$ is finite. Hence $\langle g^{-1}hg\rangle\cap H=\{1\}$, so we can set $f=g^{-1}hg$.
\end{proof}

\begin{lem}\label{lem:productsoflox}
Let $y_1,\ldots,y_s$, be loxodromic WPD elements of $G$ such that $E(y_i)\neq E(y_j)$ for $i\neq j$. Then there exist $\lambda, c$, and $N$ such that for any $i_1,\ldots,i_t\in\{1,\ldots,s\}$ with each $i_k\neq i_{k+1} \mod t$ and $m_1,\ldots, m_s\in\Z$ with each $m_i\geq N$, the element $z=y_{i_1}^{m_1}\ldots y_{i_t}^{m_t}$ is loxodromic. Moreover, if $W_i$ is a shortest word in $S$ representing $y_i$, then any path in $\Cay(G, S)$ labeled by 
\begin{equation}\label{eq:qgwords}
W_{i_i}^{m_1}\ldots W_{i_t}^{m_t}
\end{equation}
is a $(\lambda, c)$-quasi-geodesic. 
\end{lem}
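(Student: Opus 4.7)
The plan is to run a classical ping-pong argument in the hyperbolic space $X$, using Lemma \ref{lem:brokenqgeod} to promote local geodesic segments into a global quasi-geodesic, and then push the estimate back to $\Cay(G,S)$ via the orbit map.

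The key preliminary step is to bound the Gromov products at the ``corners.'' Since each $y_i$ is loxodromic WPD, the pair $\{y_i^{+\infty},y_i^{-\infty}\}\subset\partial X$ is the fixed-point set of $E^+(y_i)$ and hence determines $E(y_i)$ as its setwise stabilizer. A standard consequence of WPD is that two loxodromic WPD elements with distinct maximal elementary subgroups must have disjoint endpoint pairs on $\partial X$. Hence for each pair $i\neq j$, the sequences $y_i^{-m}x_0\to y_i^{-\infty}$ and $y_j^{m'}x_0\to y_j^{+\infty}$ converge to distinct points of $\partial X$, so the Gromov products $(y_i^{-m}x_0\mid y_j^{m'}x_0)_{x_0}$ are uniformly bounded in $m,m'\geq 1$. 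Fix $C_0$ to be such a uniform upper bound that is also at least $14\delta$, and set $C_1=12(C_0+\delta)+1$.

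With this bound in hand, I would apply Lemma \ref{lem:brokenqgeod} directly. Let $g_k=y_{i_1}^{m_1}\cdots y_{i_{k-1}}^{m_{k-1}}$ and let $p_k=[g_kx_0,g_{k+1}x_0]$. The length of $p_k$ equals $d_X(x_0,y_{i_k}^{m_k}x_0)\geq m_k\tau(y_{i_k})-K$ for a uniform constant $K$, so choosing $N$ so large that $N\cdot\min_i\tau(y_i)\geq K+C_1$ ensures $\ell(p_k)\geq C_1$ for every block. The Gromov product at $g_kx_0$ equals $(y_{i_k}^{-m_k}x_0\mid y_{i_{k+1}}^{m_{k+1}}x_0)_{x_0}\leq C_0$ by the previous paragraph. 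The moreover clause of Lemma \ref{lem:brokenqgeod} then yields that the concatenation $p_1\cdot\ldots\cdot p_t$ is a $(2,2C_1)$--quasi-geodesic in $X$. The cyclic condition $i_t\neq i_1$ gives the same Gromov product bound at the wrap-around junction, so iterating on powers of $W$ produces a bi-infinite $(2,2C_1)$--quasi-geodesic on which $z$ translates, proving $z$ is loxodromic.

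For the Cayley graph statement, I would use that the orbit map $\phi\colon\Cay(G,S)\to X$, $g\mapsto gx_0$, is $\Lambda_S$--Lipschitz with $\Lambda_S=\max_{s\in S}d_X(x_0,sx_0)$, so $d_S(u,v)\geq \Lambda_S^{-1}d_X(ux_0,vx_0)$. For corner vertices $u=g_a,v=g_b$ on the path, the subword from $u$ to $v$ is again of the allowed form, so the $X$-estimate above gives $d_X(ux_0,vx_0)\geq \tfrac{\tau}{4}\sum_{k=a+1}^bm_k-2C_1$ (after enlarging $N$ so $m_k\tau(y_{i_k})-K\geq \tfrac12 m_k\tau$, with $\tau=\min_i\tau(y_i)$). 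This is comparable to the arc-length $\sum_{k=a+1}^b m_k|y_{i_k}|_S\leq L\sum m_k$, where $L=\max_i|y_i|_S$, yielding a uniform corner-to-corner quasi-geodesic estimate. For non-corner vertices one rounds inside the single block to which they belong; the discrepancy is controlled because loxodromic WPD implies each $\langle y_i\rangle$ is undistorted in $\Cay(G,S)$ (the restriction of $\phi$ to $\langle y_i\rangle$ is a quasi-isometric embedding into $X$). The main obstacle is the uniform Gromov product bound of Step~1; once that is available, the remainder is a direct application of Lemma \ref{lem:brokenqgeod} together with routine orbit-map comparisons.
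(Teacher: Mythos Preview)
Your approach is correct and takes a genuinely different route from the paper's. The paper does not bound Gromov products at the block junctions; instead it invokes and adapts Ol'shanskii's argument \cite[Lemma~2.3]{Ols93}, substituting \cite[Proposition~6]{BestvinaFujiwara} (for loxodromic WPD $g,h$ with $E(g)\neq E(h)$, a bound on $d_X(g^mx_0,h^nx_0)$ bounds $m$) for the one step in Ol'shanskii's proof that used local finiteness of the Cayley graph. You instead observe that $E(y_i)\neq E(y_j)$ forces disjoint fixed-point pairs on $\partial X$, hence uniformly bounded Gromov products $(y_i^{-m}x_0\mid y_j^{m'}x_0)_{x_0}$, and feed this directly into the paper's own Lemma~\ref{lem:brokenqgeod}. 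Your route is more self-contained and uses tools already set up in the paper; the paper's route avoids any discussion of boundary points but outsources the core estimate.

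One point to tighten in your Cayley-graph step: the ``rounding'' for non-corner vertices is too compressed. A general subword begins and ends mid-block, with tail exponents possibly below $N$, so neither your $X$-estimate nor the undistortion of a single $\langle y_i\rangle$ applies verbatim to the full subpath between two such vertices. The clean fix (which the paper uses) is to write any subword as $U_1WU_2$ with $W$ again of the allowed form and $\|U_j\|$ bounded by $N\max_i|y_i|_S$; the short prefix and suffix then absorb into the additive constant. Your undistortion remark is the right ingredient for controlling these tails, but the decomposition should be made explicit rather than left under ``routine.''
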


\begin{proof}

Fix a basepoint $x_0\in X$, and let $\alpha$ be the path from $x_0$ to $zx_0$ obtained by concatenating geodesics $[p, y_{i_1}p]\cdot[y_{i_1}p, y_{i_1}^2p]\cdot \ldots \cdot [y_{i_1}^{m_1}\ldots y_{i_t}^{m_t-1}p, y_{i_1}^{m_1}\ldots y_{i_t}^{m_t}p]$. In other words, $\alpha$ is the path in $X$ based at $x_0$ and labeled by $W_{i_i}^{m_1}\cdots W_{i_t}^{m_t}$. 

We claim that there exist constants $\nu$ and $d$ depending only on $y_1,\dots, y_s$ (in particular, independent of $m_1,\dots,m_t$) such that $\alpha$ is a $(\nu, d)$--quasi-geodesic. The proof is essentially the same as \cite[Lemma 2.3]{Ols93}, where this is proved in the case that $G$ is hyperbolic and $X=\Cay(G, S)$. Formally the fact that $X$ is a Cayley graph with respect to a finite generating set is used only once in the proof of \cite[Lemma 2.3]{Ols93} in the form of references to previous lemmas (\cite[Lemma 2.1]{Ols93} and \cite[Lemma 2.2]{Ols93}) that are used to show that for infinite order elements $g$ and $h$ of a hyperbolic group with $E(g)\neq E(h)$, if $d_S(g^m, h^{n})<C$ then there is a bound on $m$ depending on $g$, $h$ and $C$. In the setting where $g$ and $h$ are loxodromic WPD elements we can obtain the same conclusion (with $d_S(g^{m}, h^{n})$ replaced by $d_X(g^mx_0, h^{n}x_0)$) by \cite[Proposition 6]{BestvinaFujiwara}. The rest of the proof of \cite[Lemma 2.3]{Ols93} works in our situation with no essential changes.

Note that for any $k\geq 1$, the word $(W_{i_i}^{m_1}\cdots W_{i_t}^{m_t})^k$ is in the same form as (\ref{eq:qgwords}). Hence for any $k$, the concatenation of paths $\alpha\cdot z\alpha\cdot\ldots\cdot z^k\alpha$ is a $(\nu, d)$--quasi-geodesic from $x_0$ to $z^kx_0$. It follows that $z$ acts loxodromically on $X$.

Next, we prove that any path in $\Cay(G, S)$ labeled by $W_{i_i}^{m_1}\cdots W_{i_t}^{m_t}$ is a quasi-geodesic. In order to simplify notation we will assume that $y_1,\dots,y_s\in S$, hence each $W_{i_j}$ consists of a single letter. Since the constants $(\lambda, c)$ are allowed to depend on $y_1,..,y_s$, there is no loss of generality here.

Consider the $(\nu, d)$--quasi-geodesic path $\alpha$ in $X$ constructed above. Let $D_1=\max_{g\in S}d_X(x_0, gx_0)$ and $D_2=\min_{1\leq i\leq s}d(x_0, y_ix_0)$. Note that $D_2>0$ since each $y_i$ has no fixed points. Also note that $\l(\alpha)\geq D_2(\sum m_i)$ and $d_X(x_0, zx_0)\leq D_1 |z|_S$. Since $\alpha$ is a $(\nu, d)$--quasi-geodesic, we have $D_2(\sum m_i) \leq\l(\alpha)\leq \nu d_X(x_0, zx_0)+d\leq \nu D_1 |z|_S+d$. Since each $W_i$ consists of a single letter,
\[
\|W_{i_i}^{m_1}\cdots W_{i_t}^{m_t}\|=\sum m_i\leq\frac{\nu D_1}{D_2} |z|_S+\frac{d}{D_2}.
\]

Finally, any subword of  $W_{i_i}^{m_1}\cdots W_{i_t}^{m_t}$ will be a word of the form $U_1WU_2$, where each $U_i$ has length at most $N$ and $W$ is a word in the same form as (\ref{eq:qgwords}). Hence after modifying the additive constant, the same proof applies to subwords of $W_{i_i}^{m_1}\cdots W_{i_t}^{m_t}$.
\end{proof}

\begin{lem}\label{lem:ftlox}
Let $H\leq G$ and let $f\in G$ be an infinite order element. Suppose there exists a constant $K\geq 0$ and a sequence $(n_i)$ such that $d_S(f^{n_i}, H)\leq K$. Then $\langle f\rangle\cap H\neq \{1\}$.
\end{lem}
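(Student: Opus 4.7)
The plan is to use a pigeonhole argument on the ``difference'' between $f^{n_i}$ and the closest element of $H$, exploiting that $G$ is generated by a finite set $S$ so that balls in the word metric $d_S$ are finite.

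First, I would unpack the hypothesis: for each $i$ there exists $h_i \in H$ with $d_S(f^{n_i}, h_i) \leq K$. Set $s_i := h_i^{-1} f^{n_i}$, so that $|s_i|_S = d_S(h_i, f^{n_i}) \leq K$. Thus each $s_i$ lies in the closed ball $B_S(1, K) \subseteq G$, which is finite because $S$ is finite.

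Second, I would pass to an infinite subsequence. Without loss of generality the sequence $(n_i)$ may be taken to be strictly increasing (by passing to a subsequence — note that infinitely many $n_i$ must be distinct, since $f$ has infinite order and the distinct powers $f^{n_i}$ are forced by the constraint $|h_i^{-1}f^{n_i}|_S \le K$ and finiteness of $B_S(1,K)$ not to collapse; in any case, we are free to choose a subsequence). By finiteness of $B_S(1,K)$ and pigeonhole, there exist indices $i < j$ with $s_i = s_j$, i.e. $h_i^{-1} f^{n_i} = h_j^{-1} f^{n_j}$. Rearranging gives
\[
f^{n_j - n_i} = h_j h_i^{-1} \in H.
\]
Since $n_j - n_i \neq 0$ and $f$ has infinite order, the element $f^{n_j - n_i}$ is non-trivial, and it lies in $\langle f \rangle \cap H$. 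This completes the proof.

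There is essentially no obstacle: the argument is purely combinatorial and uses no hyperbolic geometry whatsoever. The only subtle point is ensuring that infinitely many of the $n_i$ are distinct, which is needed for the pigeonhole to yield $n_j \neq n_i$; this follows immediately because $f$ has infinite order, so the map $n_i \mapsto f^{n_i}$ is injective, and the $f^{n_i}$ all lie in the finite set $H \cdot B_S(1,K)$ only if we restrict to finitely many $n_i$ — hence if $(n_i)$ is an infinite sequence we can pick distinct representatives. The lemma will then feed into the main argument by providing a mechanism to conclude $\langle f\rangle \cap H^g = \{1\}$ by contrapositive: if no non-trivial power of $f$ lies in $H^g$, then the orbit $\{f^n x_0\}$ must eventually escape every neighborhood of $H^g x_0$.
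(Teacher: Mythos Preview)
Your core pigeonhole argument is correct and is in fact more direct than the paper's. The paper instead defines $\Delta = \{g \in \langle f\rangle H : |g|_S \leq K\}$, fixes once and for all a decomposition $g = x_g^{-1} y_g$ with $x_g \in \langle f\rangle$ and $y_g \in H$ for each $g \in \Delta$, and sets $\Omega = \{x_g : g \in \Delta\}$. From $f^{-n_i} h_i \in \Delta$ one then gets $x_i f^{-n_i} = y_i h_i^{-1} \in \langle f\rangle \cap H$ for some $x_i \in \Omega$, and finiteness of $\Omega$ forces some $f^{n_i} \notin \Omega$, making this element non-trivial. Your approach of pigeonholing directly on the errors $s_i = h_i^{-1} f^{n_i} \in B_S(1,K)$ and comparing two indices avoids the representative bookkeeping entirely and gives the same conclusion with less machinery.

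One caveat: your attempted justification that infinitely many $n_i$ must be distinct is garbled---the set $H \cdot B_S(1,K)$ is generally infinite when $H$ is, so that sentence does not establish anything. Indeed the lemma is literally false if $(n_i)$ is allowed to be constant (take $G$ free on $a,b$, $H = \langle b\rangle$, $f = a$, $n_i \equiv 1$). Both your proof and the paper's silently assume that $(n_i)$ takes infinitely many distinct values; this holds in the lemma's only application in the paper, so just state it as a hypothesis rather than trying to derive it.
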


\begin{proof}
Let $\Delta=\{g\in \langle f\rangle H\;|\; |g|_S\leq K\}$. For each $g\in\Delta$, fix $x_g\in\langle f\rangle$ and $y_g\in H$ such that $x_g^{-1}y_g=g$. Let $\Omega=\{x_g\;|\; g\in\Delta\}$. Note that $\Delta$ is finite and hence $\Omega$ is finite.

By assumption, there exists $h_i\in H$ such that $|f^{-n_i}h_i|_S\leq K$. Hence $f^{-n_i}h_i\in \Delta$, so $f^{-n_i}h_i=x_i^{-1}y_i$ for some $x_i\in \Omega$ and $y_i\in H$. Hence $x_if^{-n_i}=y_ih_i^{-1}\in\langle f\rangle\cap H$. Since $\Omega$ is finite, there exists $n_i$ such that $f^{n_i}\notin\Omega$, hence $x_if^{-n_i}\neq 1$.
\end{proof}

\begin{lem}\label{lem:s-conj}
Let $H$ be an infinite index stable subgroup of $G$. Then for any $g_1,\ldots,g_s\in G$, there exists a loxodromic element $f$ such that 
\[
\langle f\rangle\cap(H^{g_1}\cup \cdots\cup H^{g_s})=\{1\}.
\]
\end{lem}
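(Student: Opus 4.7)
The approach is modeled on \cite[Proposition 1]{Minasyan2}, adapted from hyperbolic groups to groups with a non-elementary WPD action on a hyperbolic space. The plan is to build $f$ as a product of high powers of loxodromic WPD elements, each one tailored to a specific conjugate of $H$, and then to use stability (via the Morse property) together with finite height of $H$ to rule out any nontrivial intersection of $\langle f\rangle$ with any $H^{g_j}$.

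First, for each $j=1,\dots,s$, I apply Lemma~\ref{lem:oneconj} to the infinite index stable subgroup $H^{g_j}$ (conjugates of stable subgroups are still stable and infinite index) to produce a loxodromic WPD element $z_j$ with $\langle z_j\rangle\cap H^{g_j}=\{1\}$. After replacing each $z_j$ by an appropriate conjugate by a loxodromic WPD element if necessary, I may further arrange that the maximal elementary subgroups $E(z_1),\dots,E(z_s)$ are pairwise distinct without losing the trivial intersection property (by taking a sufficiently generic conjugator and relabeling). Then Lemma~\ref{lem:productsoflox} supplies constants $\lambda$, $c$, $N$ so that, for any $m_1,\dots,m_s\geq N$, the element $f:=z_1^{m_1}z_2^{m_2}\cdots z_s^{m_s}$ is loxodromic, and every power $f^n$ is represented by a word of the form treated by Lemma~\ref{lem:productsoflox}, labeling a $(\lambda,c)$-quasi-geodesic path in $\Cay(G,S)$.

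Suppose for contradiction that $f^n\in H^{g_j}$ for some $n>0$ and some $j$. Stability of $H^{g_j}$ yields a Morse constant $R=R(\lambda,c)$ so that the quasi-geodesic from $1$ to $f^n$ lies in $\mc{N}_R(H^{g_j})$. Hence every partial product along this path is within $R$ of $H^{g_j}$. Focusing on the partial products inside the $j$-th block of each of the $n$ cycles, each $f^{k-1}(z_1^{m_1}\cdots z_{j-1}^{m_{j-1}})z_j^r=h_{k,r}u_{k,r}$ with $h_{k,r}\in H^{g_j}$ and $u_{k,r}$ in the finite ball $B_R\subset G$; comparing two such expressions within the same cycle yields
\[
u_{k,r}\, z_j^{r'-r}\, u_{k,r'}^{-1}\;=\;h_{k,r}^{-1} h_{k,r'}\;\in\; H^{g_j}.
\]
For $m_j$ large relative to $|B_R|$, the pigeonhole principle forces some $u\in B_R$ and some $p>0$ with $uz_j^p u^{-1}\in H^{g_j}$, equivalently $\langle z_j\rangle\cap (H^{g_j})^{u}\neq\{1\}$.

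The main obstacle is that this conclusion involves a conjugate $(H^{g_j})^u$ rather than $H^{g_j}$ itself, and the ball $B_R$ over which $u$ ranges depends on the $z_j$'s via the quasi-geodesic constants, producing a circular dependency. The critical tool for breaking it is the finite height $n_0$ of $H$ (stable subgroups have finite height by \cite[Theorem 1.1]{AMST}): for any fixed infinite cyclic $\langle z\rangle$, the number of distinct cosets $vH^{g_j}$ with $\langle z\rangle\cap (H^{g_j})^v\neq\{1\}$ is at most $n_0$, because otherwise a single positive power of $z$ would lie in the intersection of more than $n_0$ distinct conjugates of $H^{g_j}$, contradicting height. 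Using this a priori bound I strengthen the choice of $z_j$ by a bootstrap: start with a pilot choice of $z_j$'s via Lemma~\ref{lem:oneconj}, compute the resulting constants and hence $R_0$, identify the finitely many bad conjugates $(H^{g_j})^u$ with $u\in B_{R_0}$ (finiteness comes from finite height, up to cosets), and refine each $z_j$ by iteratively reapplying Lemma~\ref{lem:oneconj} and combining via Lemma~\ref{lem:productsoflox} to kill these bad conjugates. The finite-height bound guarantees that this refinement terminates, yielding $z_j$'s for which the pigeonhole step above produces a contradiction and hence $\langle f\rangle\cap(H^{g_1}\cup\cdots\cup H^{g_s})=\{1\}$.
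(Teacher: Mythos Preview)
Your overall strategy---build $f$ as a product of high powers of loxodromic WPD elements $z_j$ adapted to each conjugate, then use the Morse property to trap a quasi-geodesic near $H^{g_j}$---matches the paper's. You also correctly isolate the real difficulty: pigeonholing along the $j$-th block only yields $u z_j^{p} u^{-1}\in H^{g_j}$ for some $u\in B_R$, which contradicts nothing. The gap is in your proposed fix. The bootstrap (``refine each $z_j$ to kill the bad conjugates $(H^{g_j})^u$ with $u\in B_{R_0}$'') is circular: refining the $z_j$'s changes the quasi-geodesic constants $(\lambda,c)$, hence $R$, hence the collection of bad $u$'s. Finite height bounds, for a \emph{fixed} $z$, the number of cosets $vH$ with $\langle z\rangle\cap H^v$ infinite; it gives no uniform bound across the sequence of refined $z_j$'s, so there is no reason the process terminates. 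Moreover, ``refining $z_j$ to avoid finitely many conjugates of $H$'' is exactly the statement you are trying to prove, so invoking it mid-argument is not available.

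The paper avoids the conjugate issue by a different, elementary normalization rather than a bootstrap. After choosing $x_i$ with $\langle x_i\rangle\cap H^{g_i}=\{1\}$, it replaces each $x_i$ by a power $y_i=x_i^{\alpha_i}$ (where $\alpha_i$ is an lcm of minimal exponents) so that for every pair $(i,j)$ one has the dichotomy: either $\langle y_i\rangle\cap H^{g_j}=\{1\}$ or $y_i\in H^{g_j}$. It then considers the whole family $z_n=y_1^n\cdots y_s^n$; the quasi-geodesic constants, and hence the Morse constant $M$, are independent of $n$. If some $z_n^{l_n}\in H^{g_j}$ for infinitely many $n$, one lets $t$ be the \emph{first} index with $y_t\notin H^{g_j}$. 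The dichotomy forces $y_1,\dots,y_{t-1}\in H^{g_j}$, so the prefix $y_1^n\cdots y_{t-1}^n$ lies \emph{in} $H^{g_j}$ (not just near a conjugate), whence $d_S(y_t^{\,n},H^{g_j})\leq M$ for infinitely many $n$. Lemma~\ref{lem:ftlox} then gives $\langle y_t\rangle\cap H^{g_j}\neq\{1\}$, and the dichotomy upgrades this to $y_t\in H^{g_j}$, a contradiction. The two ideas you are missing are precisely this $\alpha_{ij}$ normalization and the ``first bad index'' trick; together they eliminate the extraneous conjugating element $u$ and make the argument go through without any iteration or appeal to finite height.
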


\begin{proof}
By Lemma \ref{lem:oneconj}, we can choose loxodromic WPD elements $x_i$ such that $\langle x_i\rangle\cap H^{g_i}=\{1\}$.  We will assume that  $E(x_i)\neq E(x_j)$ for $i\neq j$; otherwise we can remove $x_j$ from our list and perform the following construction without it.

Define $$\alpha_{ij}=\begin{cases} \min\{m\;|\; x_i^m\in H^{g_j}\} & \text{if such an $m$ exists}\\  1&\text{if no such $m$ exists.} \end{cases}$$

Let $\alpha_i=\operatorname{lcm}(\{\alpha_{i1},\dots,\alpha_{is}\})$, and let $y_i=x_i^{\alpha_i}$. In particular, $y_i$ is chosen such that for all $j$, either $\langle y_i\rangle \cap H^{g_j}=\{1\}$ or $y_i\in H^{g_j}$. Note that the first possibility occurs when $i=j$.

Now, let $z_n=y_1^ny_2^n\cdots y_s^n$. For all sufficiently large $n$, the element $z_n$ will act loxodromically on $X$ by Lemma \ref{lem:productsoflox}. Assume for the sake of contradiction that for all $n$, there exists $l_n$ such that $z_n^{l_n}\in H^{g_1}\cup \cdots\cup H^{g_s}$. Hence for some index $j$, there are infinitely many $n$ such that $z_n^{l_n}\in H^{g_j}$. Let $W_i$ be a shortest word in $S$ representing $y_i$, and consider a path $\alpha_n$ in $\Cay(G, S)$ labeled by $(W_1^n\cdots W_s^n)^{l_n}$. By Lemma \ref{lem:productsoflox}, $\alpha_n$ is a $(\lambda, c)$--quasi-geodesic where  $\lambda$ and $c$ are independent of $n$. Since $H^{g_j}$ is a Morse subset of $\Cay(G, S)$, there exists $M=M(\lambda, c)$ such that $\alpha_n\subseteq\mc{N}_M( H^{g_j})$ for infinitely many $n$. Choose $t=\min\{ i\;|\; y_i\notin H^{g_j}\}$, and note that $1\leq t\leq j$. Now $d_S(y_1^n\cdots y_t^n, H^{g_j})\leq M$ for infinitely many $n$. Since $y_1,\ldots, y_{t-1}\in H^{g_j}$ by our choice of $t$, there exists some $a_n$ such that $y_t^na_n\in H^{g_j}$ and $|a_n|_S\leq M$ for infinitely many $n$. Then $\langle y_t\rangle \cap H^{g_j}\neq\{1\}$ by Lemma \ref{lem:ftlox}, and hence $y_t\in H^{g_j}$ by construction. But this contradicts our choice of $y_t$.
\end{proof}

Finally, in order to complete the proof of Theorem \ref{thm:stablesubgroup} we will assume that the action of $G$ on $X$ is WPD. This implies that the loxodromic elements produced by Lemma \ref{lem:s-conj} are in fact WPD elements.
\begin{proof}[Proof of Theorem \ref{thm:stablesubgroup}]

Assume for the sake of contradiction that every loxodromic element $x$ has a power contained in a conjugate of $H$. Let $h\in G$ be a fixed loxodromic element, and let $l_0\geq 1$ and $g_0\in G$ be such that $h^{l_0}\in H^{g_0}$. Let $y$ be any loxodromic element. We first assume that $E(y)\neq E(h)$. By Lemma \ref{lem:productsoflox}, there exists $N$, $\lambda$, and $c$ such that  $y^nh^n$ is a loxodromic element for all $n\geq N$. Moreover, if $U$ and $W$ are shortest words in $S$ representing $h$ and $y$ respectively, then for any $k\geq 1$ any path in $\Cay(G, S)$ labeled by $(U^nW^n)^k$ is a $(\lambda, c)$--quasi-geodesic.  By assumption, for all $n$ there exists $l_n\geq 1$, $g_n\in G$ such that $g_n(y^nh^n)^{l_n}g_n^{-1}=h_n\in H$. Consider the path from $1$ to $h_n^k$ which is formed by concatenating paths labeled by $g_n$, $(y^nh^n)^{kl_n}$, and $g_n^{-1}$, where $k$ is chosen sufficiently large (see Figure \ref{fig:quad}). Let $p_n$ be the path labeled by $(y^nh^n)^{kl_n}$. Let $q_1$ be a shortest path from $1$ to $p_n$ and $q_2$ a shortest path from $h_n^k$ to $p_n$. Let $p_n^{\prime}$ be the subpath of $p_n$ between the endpoints of $q_1$ and $q_2$. Then for $k$ sufficiently large, $q_1p_n^\prime q_2^{-1}$ is a $(3\lambda, c)$--quasi-geodesic.  

 Since $H$ is a Morse subset of $\Cay(G, S)$, there exists $M=M(3\lambda, c)$ such that $q_1p_n^\prime q_2\subseteq \mc{N}_M(H)$. For $k$ sufficiently large, $p_n^\prime$ will contain a subpath labeled by $W^nU^n$, so there exist $u_n,v_n,z_n\in G$, each of word length $\leq M$, such that $u_ny^nv_n^{-1}\in H$ and $v_nh^nz_n^{-1}\in H$. Since there are only finitely many such words, for some $i\neq j$, we have $u_i=u_j$, $v_i=v_j$, and $z_i=z_j$. Hence $v_iy^{j-i}v_i^{-1}\in H$ and $v_i h^{j-i}v_i^{-1}\in H$.
 
 \begin{figure}
\def\svgwidth{4in}  
  \centering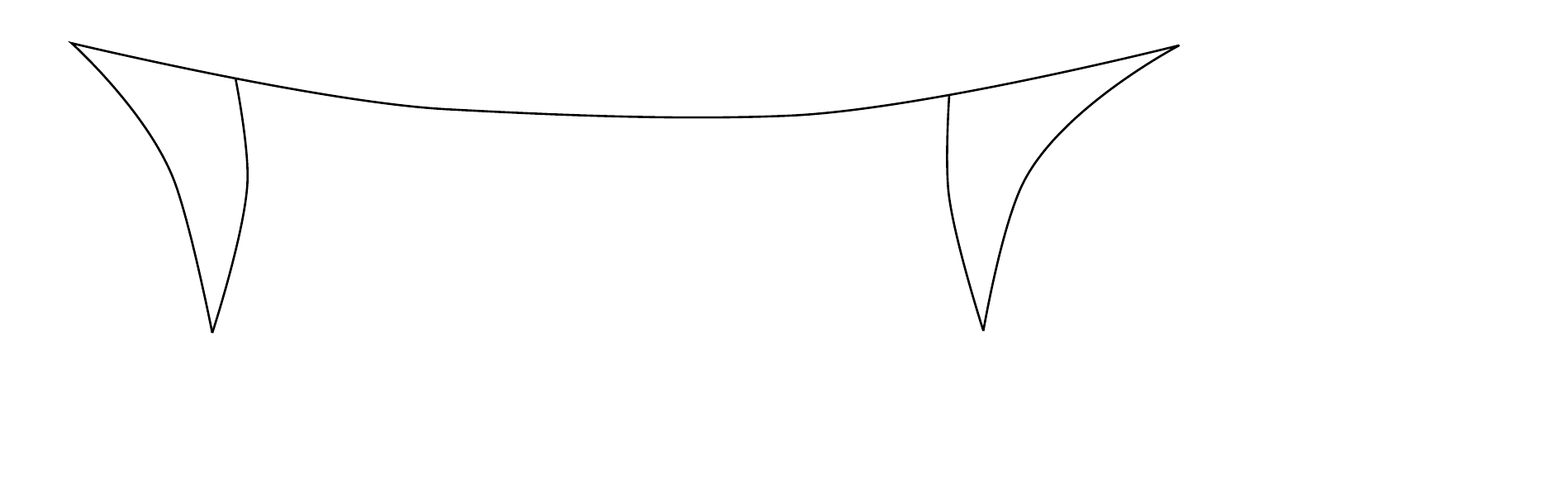 \\
	\caption{Proof of Theorem \ref{thm:stablesubgroup}.} 
	\label{fig:quad}
\end{figure}

That is, for all loxodromic elements $y$, there exists $a=a(y)$ and $l=l(y)\neq 0$ such that $ay^la^{-1}\in H$ and $ah^la^{-1}\in H$. Note that if $E(y)=E(h)$, then there exists $m$ such that $y^m\in\langle h\rangle$, and we can take $l=ml_0$ and $a=g_0$. Now if $y_1,\dots,y_n$ are loxodromic elements and $a_i=a(y_i)$ and $l_i=l(y_i)$, then $y_i^{l_i}\in H^{a_i}$. Moreover, $h^{l_1l_2\cdots l_n}\in H^{a_1^{-1}}\cap\cdots \cap H^{a_n^{-1}}$, hence this intersection is infinite. However, $H$ has finite height, so there are only finitely many distinct $a_i^{-1}$ (and hence distinct $a_i$)  that occur. But  this gives a finite union of conjugates of $H$ which contains a power of any loxodromic element, contradicting Lemma \ref{lem:s-conj}.
\end{proof}

Let $S$ be a non-exceptional surface and $\MCG(S)$ the mapping class group of $S$. A subgroup of $\MCG(S)$ is \emph{convex cocompact} if and only if it is stable \cite{DT}.  Also, an element of $\MCG(S)$ acts as a loxodromic WPD isometry of the curve graph of $S$ if and only if it is pseudo-Anosov. Hence we obtain the following corollary.

\begin{cor}\label{cor:MCG}
Let $S$ be a non-exceptional surface and let $H\leq \MCG(S)$ be a convex cocompact subgroup. Then there exists a pseudo-Anosov $f\in \MCG(S)$ such that $\langle f\rangle \cap H^g=\{1\}$ for all $g \in \MCG(S)$.
\end{cor}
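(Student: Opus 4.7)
The plan is to apply Theorem \ref{thm:stablesubgroup} directly, with $G = \MCG(S)$ acting on the curve graph $\mathcal{C}(S)$. First I would verify each hypothesis of that theorem. The curve graph is Gromov hyperbolic by Masur--Minsky, and the action of $\MCG(S)$ on it is non-elementary for any non-exceptional $S$ (there are plenty of independent pseudo-Anosovs). Bowditch showed that every pseudo-Anosov acts as a loxodromic WPD isometry on $\mathcal{C}(S)$, and these are in fact the only loxodromic elements; in particular the action is a non-elementary WPD action. By the Durham--Taylor theorem cited just before the corollary, a subgroup of $\MCG(S)$ is convex cocompact if and only if it is stable in the sense of Section \ref{sec:stablesubgp}, so $H$ is a stable subgroup of $\MCG(S)$.

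Next I would dispense with the infinite-index hypothesis: since $S$ is non-exceptional, $\MCG(S)$ is not a hyperbolic group (it contains $\Z^2$ subgroups from disjoint Dehn twists, for instance), so the Remark immediately preceding Lemma \ref{lem:oneconj} guarantees that the stable subgroup $H$ has infinite index. Alternatively, a finite-index subgroup would have to contain the entire mapping class group up to finite index and hence could not be word hyperbolic.

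With all hypotheses verified, Theorem \ref{thm:stablesubgroup} produces a loxodromic element $f \in \MCG(S)$ such that $H^g \cap \langle f \rangle = \{1\}$ for every $g \in \MCG(S)$. Since loxodromic isometries of $\mathcal{C}(S)$ are exactly the pseudo-Anosov mapping classes, this $f$ is pseudo-Anosov, which is the desired conclusion.

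There is essentially no obstacle here once Theorem \ref{thm:stablesubgroup} is in hand: the corollary is a clean translation of that general algebraic result into the mapping class group setting, using the two known dictionaries (convex cocompact $\Leftrightarrow$ stable, and pseudo-Anosov $\Leftrightarrow$ loxodromic on the curve graph). The only point worth a brief sentence is the infinite-index issue, which is immediate from the remark noting that stable subgroups of non-hyperbolic groups have infinite index automatically.
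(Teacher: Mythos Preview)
Your proposal is correct and follows essentially the same route as the paper: the corollary is deduced immediately from Theorem \ref{thm:stablesubgroup} via the two dictionaries (convex cocompact $\Leftrightarrow$ stable, by Durham--Taylor; pseudo-Anosov $\Leftrightarrow$ loxodromic WPD on $\mathcal{C}(S)$). One small citation quibble: the observation that a stable subgroup of a non-hyperbolic group automatically has infinite index appears in the paragraph right after the statement of Theorem \ref{thm:stablesubgroup}, not in the formal Remark environment preceding Lemma \ref{lem:oneconj}.
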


Next we show that a result analogous to Theorem \ref{thm:stablesubgroup} holds for relatively quasi-convex subgroups of relatively hyperbolic groups. Let $G$ be a group hyperbolic relative to a collection of proper subgroups $\{P_1,\ldots, P_n\}$, called \emph{peripheral subgroups}. In this case we refer to $G$ as a relatively hyperbolic group. Let $S$ be a finite relative generating set for $G$, i.e., $G$ is generated by $S\cup \mc{P}$, where $\mc{P}= P_1\cup\cdots\cup P_n$. Then the metric space $\Cay(G, S\cup \mc{P})$ is hyperbolic.  A subgroup $H\leq G$ is \emph{$\sigma$--relatively quasi-convex} if for any geodesic $q$ in $\Cay(G, S\cup \mc{P})$ with endpoints in $H$ and any vertex $x\in q$, we have $d_S(x, H)\leq \sigma$. Here $d_S(g, h)$ is defined to be the length of the shortest word in $S$ representing $g^{-1}h$ or $\infty$ if no such word exists. This is equivalent to several other definition of relative quasi-convexity, see \cite{Hruska, ManMP}. See also \cite{MP} for related work on combination theorems for relatively quasi-convex subgroups.  Generally speaking, relatively quasi-convex subgroups do not have to be stable in $G$. For example, each peripheral subgroup $P_i$ is relatively quasi-convex, but $P_i$ will be stable in $G$ if and only if it is hyperbolic. In fact, if each peripheral subgroup is one-ended with linear divergence then a relatively quasi-convex subgroup of $G$ is stable if and only if it has a finite intersection with each conjugate of each $P_i$ by \cite[Theorem 5.4]{AouDurTay} and \cite[Theorem 4.13]{Osin:RelHyp}. See \cite{Tran} for more on stable subgroups of relatively hyperbolic groups.

An element of a relatively hyperbolic group $G$ is \emph{elliptic} if it has finite order, \emph{parabolic} if it has infinite order and is conjugate into some peripheral subgroup, and \emph{hyperbolic} otherwise.

\begin{thm}\label{thm:relqc}
Let $H$ be an infinite index, relatively quasi-convex subgroup of a relatively hyperbolic group $G$. Then there exists a hyperbolic element $f\in G$ such that  $\langle f\rangle \cap H^g=\{1\}$ for all $g \in G$.
\end{thm}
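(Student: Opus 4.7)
The plan is to emulate the proof of Theorem~\ref{thm:stablesubgroup}, working in the relative Cayley graph $X := \Cay(G, S \cup \mc{P})$ in place of $\Cay(G, S)$. By Osin \cite{OsinAH}, $X$ is hyperbolic and the action of $G$ on $X$ is acylindrical, so every loxodromic element of this action is WPD; moreover, the loxodromic elements are exactly the hyperbolic elements of $G$. We may assume $G$ is non-elementary as a relatively hyperbolic group, since otherwise the conclusion is vacuous or trivial. By the definition of relative quasi-convexity adopted in the paper, $H$ is quasi-convex in $X$ in the usual metric sense, and hence is a Morse subset of $X$. Finally, by results of Hruska, relatively quasi-convex subgroups have finite width and finite height. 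These three properties---Morse, finite width, and finite height---are exactly the ingredients driving the proof of Theorem~\ref{thm:stablesubgroup}.

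With these in hand, the proof follows the structure of Theorem~\ref{thm:stablesubgroup}. The analog of Lemma~\ref{lem:oneconj} uses finite width: if $H$ contains no hyperbolic element, any hyperbolic $f \in G$ works, since every non-trivial power of $f$ is again hyperbolic and therefore not in $H$; otherwise, pick a hyperbolic $h \in H$ and $g \in G$ with $H \cap H^g$ finite, and note $\langle g^{-1}hg\rangle \cap H \subseteq H \cap H^g$ is a finite subgroup of the infinite cyclic group $\langle g^{-1}hg\rangle$, hence trivial. Lemma~\ref{lem:productsoflox} is already stated for an arbitrary hyperbolic WPD action and so produces $(\lambda, c)$--quasi-geodesic paths in $X$ labeled by alternating products of hyperbolic elements with distinct $E(y_i)$. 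This feeds into the analog of Lemma~\ref{lem:s-conj}, with the Morse property of each conjugate $H^{g_j}$ in $X$ playing its original role, and the final argument using finite height then transfers verbatim.

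The main technical obstacle is the analog of Lemma~\ref{lem:ftlox}, whose original proof crucially uses the finiteness of $d_S$-balls in $G$. This fails in $X$, since each peripheral subgroup lies at $d_{S\cup\mc{P}}$-distance $\leq 1$ from $1$, so balls of finite radius in $X$ are typically infinite. One way around this is to pass to the Groves--Manning cusped space $\hat{X}$, which (when peripherals are finitely generated) is a proper hyperbolic space with compact balls on which relatively quasi-convex subgroups still have quasi-convex orbits; the finite-Pigeonhole argument of Lemma~\ref{lem:ftlox} then goes through in $\hat X$ directly. Alternatively, one can prove the analog of Lemma~\ref{lem:ftlox} in $X$ itself using acylindricity: writing $g_i := h_i^{-1} f^{n_i}$ with $h_i \in H$ and $|g_i|_{S\cup\mc{P}} \leq K$, the elements $h_i^{-1} h_j = g_i f^{n_j - n_i} g_j^{-1} \in H$ act on $X$ in a manner approximating translation along $\alpha_f$, and acylindricity of the action on $X$ forces each such element into the maximal virtually cyclic subgroup $E(f)$ containing $\langle f\rangle$. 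Since $E(f) \cap H$ is then infinite and $E(f)$ is virtually cyclic, a common positive power of an infinite order element of $E(f) \cap H$ and of $f$ lies in $\langle f\rangle \cap H$, completing the argument.
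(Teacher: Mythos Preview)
Your overall plan---mimic Theorem~\ref{thm:stablesubgroup} inside $X=\Cay(G,S\cup\mc{P})$ with finite height/width replacing the corresponding facts for stable subgroups---is exactly what the paper does. But the paper handles the local-finiteness problem differently, and your proposed workarounds have issues.

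First, the paper does not try to prove a version of Lemma~\ref{lem:ftlox} for the $X$-metric. Instead it exploits the very definition of relative quasi-convexity: vertices on a \emph{geodesic} in $X$ joining two points of $H$ lie at bounded $d_S$-distance from $H$, and $d_S$-balls are finite since $S$ is finite. With some care (quasi-geodesics without backtracking and phase vertices, as in the proof of Proposition~\ref{lem:transverse}, via \cite[Proposition~3.15]{Osin:RelHyp}) one upgrades the Morse bound in $X$ to a $d_S$-bound, and then Lemma~\ref{lem:ftlox} and the pigeonhole step in the main argument go through \emph{unchanged}. Note also that local finiteness is used \emph{twice}---once in Lemma~\ref{lem:s-conj} and once in the main body of Theorem~\ref{thm:stablesubgroup} (choosing $u_n,v_n,z_n$ from a finite set)---so saying the final argument ``transfers verbatim'' from the Morse property in $X$ alone is not right.

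Second, your acylindricity sketch has a gap. With $g_i=h_i^{-1}f^{n_i}$ of bounded $X$-length, the element $h_i^{-1}h_j=g_if^{n_j-n_i}g_j^{-1}$ moves the basepoint $x_0$ close to $f^{n_j-n_i}x_0$, but you have no control over where it sends \emph{other} points of $\alpha_f$: the bound $|g_j|_{S\cup\mc{P}}\leq K$ controls the displacement of $x_0$ by $g_j$, not of $f^px_0$. So you cannot conclude that $h_i^{-1}h_j$ coarsely preserves $\alpha_f$, and hence not that it lies in $E(f)$. The cusped-space route would work, but only under the extra hypothesis that $G$ and the peripherals are finitely generated so that $\hat X$ is proper; the paper's $d_S$ argument avoids this.

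Finally, for the analogue of Lemma~\ref{lem:oneconj} the paper simply cites \cite[Lemma~8.4]{HruWis}; your finite-width argument is a reasonable alternative but one should note that the Hruska--Wise width/height results are stated in a relative form and check that a parabolic intersection still gives the desired conclusion (it does, since powers of a hyperbolic element are never parabolic).
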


The proof of Theorem \ref{thm:relqc} is essentially the same as the proof of Theorem \ref{thm:stablesubgroup} with only minor modifications. We will explain the necessary modifications instead of repeating the entire argument.

Fix $G$ hyperbolic relative to $\{P_1,\ldots, P_n\}$ and let $S$ and $\mc{P}$ be defined as above. First, note that the analogue of Lemma \ref{lem:oneconj}, that is the fact that there exists a hyperbolic element $f\in G$ such that $\langle f\rangle\cap H=\{1\}$ for any infinite index relatively quasi-convex subgroup $H$, follows from \cite[Lemma 8.4]{HruWis}. We can now repeat the rest of the proof of Theorem \ref{thm:stablesubgroup} inside the space $\Cay(G, S\cup \mc{P})$ instead of $\Cay(G, S)$. There are two places where the fact that the metric on $\Cay(G, S)$ is locally finite is used: once in the proof of Lemma \ref{lem:s-conj} and once in the proof of Theorem \ref{thm:stablesubgroup}. In both cases, the corresponding bound on distances comes from the fact that $H$ is a Morse subset of $\Cay(G, S)$. In the relatively quasi-convex setting, we instead use that the definition of relative quasi-convexity gives a bound on these distances with respect to metric $d_S$. This metric is again locally finite, and hence the same conclusions hold in this setting. Finally, in the last step of the proof of Theorem \ref{thm:stablesubgroup} we use the fact that stable subgroups have finite height. In the relatively quasi-convex setting, we instead appeal to \cite[Theorem 1.4]{HruWis}. Note that the definition of finite height used in \cite{HruWis} is slightly different then the one given above as it has been adapted to the relative setting, but it still applies in the same way and gives the same conclusion as in the proof of Theorem \ref{thm:stablesubgroup}.

\section{Applications}\label{sec:App}
We are now ready to give several applications of our main theorems, including the proofs of Corollaries \ref{cor:elliptic} and \ref{cor:relqcx} and Theorems \ref{thm:hypqc} and \ref{thm:MCGCCC}.

\subsection{Hyperbolic groups}
Let $G$ be a non-elementary hyperbolic group generated by a finite subset $S$.  We will consider the action of $G$ on $\Cay(G,S)$. This action is proper and cocompact, hence acylindrical. A subgroup is elliptic for this action if and only if it is finite, hence Corollary \ref{cor:mainell} implies Corollary \ref{cor:elliptic}(1).

 It is well-known that a subgroup $H\leq G$ is quasi-convex in $\Cay(G, S)$ if and only if it is quasi-isometrically embedded in $\Cay(G,S)$.  Recall also that a subgroup $H$ is \emph{malnormal} if for all $g\in G\setminus H$, we have $H\cap H^g=\{1\}$, and \emph{almost malnormal} if for all $g\in G\setminus H$, the intersection $H\cap H^g$ is finite. The following lemma follows from results in \cite{DGO}, in particular \cite[Example 2.6, Proposition 2.10, \& Theorem 2.7]{DGO}.

\begin{lem}\label{lem:almal}
Let $H$ be a quasi-convex subgroup of a non-elementary hyperbolic group. Then $H$ hyperbolically embeds in $G$ if and only if $H$ is almost malnormal.
\end{lem}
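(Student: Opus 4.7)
The plan is to deduce both implications from the three cited results in \cite{DGO} together with Theorem \ref{thm:hypembed}.

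For the ``only if'' direction, the approach is to invoke \cite[Proposition 2.10]{DGO}, which asserts that any hyperbolically embedded subgroup $H$ of any group $G$ satisfies $|H \cap H^g|<\infty$ for every $g \in G \setminus H$; this is a general structural property of hyperbolically embedded subgroups and does not require hyperbolicity of $G$. This is exactly almost malnormality, so this direction is immediate.

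For the ``if'' direction, I would first use that a quasi-convex subgroup of a hyperbolic group is quasi-isometrically embedded in $\Cay(G,S)$. By Theorem \ref{thm:hypembed}, it therefore suffices to check that $H$ is geometrically separated in $\Cay(G,S)$. The plan for this is a standard thin-triangle argument: fix $\kappa \geq 0$ and suppose for contradiction that there exist $g_n \in G \setminus H$ with $\diam(\pi(H) \cap \mc{N}_\kappa(g_n \pi(H)))\to\infty$. Using $\sigma$-quasi-convexity of both $\pi(H)$ and $g_n \pi(H)$ together with hyperbolicity of $\Cay(G,S)$, one extracts for each $n$ pairs of elements $h_n, h_n' \in H$ at distance tending to infinity such that $g_n^{-1} h_n g_n$ lies within uniformly bounded distance of $h_n' \in H$. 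After correcting by a bounded element one finds that $H\cap H^{g_n}$ is infinite for some $n$, contradicting almost malnormality. This implication is in fact stated directly as \cite[Example 2.6]{DGO}, so it suffices to cite it; Theorem \ref{thm:hypembed} (equivalently \cite[Theorem 2.7]{DGO}) then closes the argument.

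The main (and essentially only) nontrivial step is the geometric separation argument in the ``if'' direction. It is a standard consequence of the interaction between hyperbolicity, quasi-convexity, and almost malnormality, and it is exactly the kind of estimate that has been packaged within the DGO framework as \cite[Example 2.6]{DGO}. Once geometric separation is in hand, quasi-isometric embedding (coming for free from quasi-convexity) combines with it via Theorem \ref{thm:hypembed} to yield $H\h G$.
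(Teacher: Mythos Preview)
Your proposal is correct and takes essentially the same approach as the paper: both directions are deduced from the same three results in \cite{DGO}, with \cite[Proposition 2.10]{DGO} giving the ``only if'' direction and \cite[Example 2.6, Theorem 2.7]{DGO} supplying the ``if'' direction. The paper in fact gives no argument beyond these citations, so your added sketch of the geometric separation step is extra detail rather than a different route; note only that Theorem \ref{thm:hypembed} is cited in the paper as \cite[Theorem 4.2]{DGO}, not as \cite[Theorem 2.7]{DGO}, so be careful not to conflate them.
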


The following theorem clearly implies Theorem \ref{thm:hypqc}.

\begin{thm} \label{thm:hypgps}
Let $G$ be a non-elementary hyperbolic group and let $(\mu_i)$ be a sequence of permissible probability distributions on $G$. Let $H$ be an infinite index quasi-convex subgroup of $G$. Then a random subgroup $R$ will satisfy $\langle H, R, E(G)\rangle\cong HE(G)\ast_{E(G)} RE(G)$ and $\langle H, R\rangle$ will be an infinite index quasi-convex subgroup of $G$. If, in addition, $H$ is almost malnormal in $G$, then $\langle H, R\rangle$ will also be almost malnormal in $G$.
\end{thm}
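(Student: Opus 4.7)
The plan is to apply Theorem \ref{thm:main1} with $X=\Cay(G,S)$ for a finite generating set $S$ of $G$. The action of $G$ on $X$ is cocompact and hence acylindrical, giving a non-elementary partially WPD action on a hyperbolic space; since $H$ is quasi-convex in the hyperbolic group $G$, it has quasi-convex orbits in $X$ and is quasi-isometrically embedded in $X$. The only non-trivial hypothesis of Theorem \ref{thm:main1} that needs verification is the existence of a loxodromic WPD element $f\in\bigcap_i\Gamma_{\mu_i}$ that is transverse to $H$.

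For a fixed $k$, set $K=\bigcap_{i=1}^k\Gamma_{\mu_i}$. The permissibility of each $\mu_i$ forces $\Gamma_{\mu_i}$ to be a non-elementary WPD subgroup with $E(\mu_i)=E(G)$, and a standard ping-pong argument shows that the finite intersection $K$ itself contains loxodromic WPD elements. Since quasi-convex subgroups of hyperbolic groups are stable, the remark following Theorem \ref{thm:stablesubgroup} (equivalently \cite[Proposition 1]{Minasyan2}) allows us to choose an infinite order $f\in K$ with $\langle f\rangle\cap H^g=\{1\}$ for every $g\in G$. In a hyperbolic group this algebraic condition is equivalent to $f$ being transverse to $H$ in the sense of Definition \ref{def:transverse}: if $\alpha_f\cap\mc{N}_L(g\pi(H))$ were unbounded for some $L$ and $g$, quasi-convexity of $\pi(H)$ would furnish elements $h_n\in gHg^{-1}$ with $h_n x_0$ near $f^n x_0$ for infinitely many $n$, and properness of the action together with pigeonhole would yield $f^{-n}h_n=f^{-m}h_m$ for some $n\neq m$, hence $f^{m-n}\in H^g$, contradicting the choice of $f$.

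With such an $f$, Theorem \ref{thm:main1} together with Remark \ref{finnormsub} produces $\langle H,R,E(G)\rangle\cong HE(G)\ast_{E(G)}RE(G)$ and shows that $\langle H,R\rangle$ is quasi-convex in $X$, hence in $G$. The infinite-index claim follows from a comparison of boundaries or growth rates: $\langle H,R\rangle$ is quasi-isometrically embedded and essentially splits as $H\ast R$ with $R\cong\mathbb F_k$, and such a free-product subgroup cannot be of finite index in $G$; alternatively, the transverse element $f$ and its large powers witness elements of $G$ lying outside $\langle H,R\rangle$.

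For the almost-malnormal assertion, suppose $H$ is almost malnormal. If $H$ is non-trivial then it is infinite, and we claim $E(G)\leq H$: for each $e\in E(G)$, the centralizer $C_H(e)$ has finite index in $H$ (since $H$ acts on the finite set $E(G)$ by conjugation) and hence is infinite, and as $C_H(e)\subseteq H\cap e^{-1}He$, almost malnormality forces $e\in H$. Consequently $\langle H,R,E(G)\rangle=\langle H,R\rangle$. By Lemma \ref{lem:almal}, $H$ hyperbolically embeds in $G$, so Corollary \ref{cor:hyphypembed} yields that $\langle H,R\rangle$ hyperbolically embeds in $G$, and a final application of Lemma \ref{lem:almal} gives almost malnormality of $\langle H,R\rangle$. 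The main obstacle throughout is locating the transverse element $f$ inside the intersection $K$ and transferring the algebraic ``no conjugate power into $H$'' condition to the geometric transverse condition; everything else is a direct application of the machinery already developed.
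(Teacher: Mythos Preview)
Your overall strategy is the paper's: apply Theorem~\ref{thm:main1} with $X=\Cay(G,S)$, obtain the transverse element from Theorem~\ref{thm:stablesubgroup} (which in this setting is exactly \cite[Proposition~1]{Minasyan2}), and derive the almost-malnormal clause from Corollary~\ref{cor:hyphypembed} together with Lemma~\ref{lem:almal}. The extra details you add---the properness argument turning ``no conjugate power in $H$'' into geometric transversality, and the verification that $E(G)\le H$ for infinite almost-malnormal $H$---are correct and make explicit what the paper leaves to the reader.

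The genuine gap is in your attempt to force $f$ into $K=\bigcap_{i\le k}\Gamma_{\mu_i}$. You write that ``a standard ping-pong argument shows that the finite intersection $K$ itself contains loxodromic WPD elements,'' but ping-pong produces free subgroups from given elements; it does not manufacture elements of an intersection, and in fact $K$ can be trivial. For example, in $F_3=\langle a,b,c\rangle$ the subgroups $\langle a,b\rangle$ and $\langle c,aca^{-1}\rangle$ are both non-elementary (so support permissible measures, since $E(F_3)=\{1\}$) yet intersect trivially. Even when $K$ does contain loxodromics, invoking the remark after Theorem~\ref{thm:stablesubgroup} further requires that $K\cap H^g$ have infinite index in $K$ for every $g$, which you do not check and which can fail outright---nothing in the hypotheses prevents some $\Gamma_{\mu_i}$ from lying inside $H$, in which case the random walk never leaves $H$ and the free-product conclusion is impossible. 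The paper's own proof is terse at this point and simply produces $f\in G$ without discussing membership in $\bigcap_i\Gamma_{\mu_i}$, so you are right to flag the issue; the patch you propose, however, does not work, and the statement as written appears to need an extra hypothesis (such as full support) to go through. A minor additional slip: ``if $H$ is non-trivial then it is infinite'' is false for almost-malnormal subgroups, since every finite subgroup is vacuously almost malnormal; the finite-$H$ case of your last paragraph therefore needs separate handling.
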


\begin{proof}

We will apply Theorem \ref{thm:main1} and Remark \ref{finnormsub} with $X=\Cay(G, S)$. For this action, an element $g\in G$ is loxodromic if and only if it is infinite order. By Theorem \ref{thm:stablesubgroup} (which, in this case, is exactly \cite[Proposition~1]{Minasyan2}), there is an infinite order element $f\in G$ such that $H^g\cap\langle f\rangle=\{1\}$ for all $g\in G$. This implies that $f$ is transverse to $H$ in $X$. The first statement of the theorem now follows by Theorem \ref{thm:main1} and the second statement by Corollary \ref{cor:hyphypembed} and Lemma \ref{lem:almal}. 

\end{proof}

\subsection{Relatively hyperbolic groups} \label{sec:relhyp}
 We continue using the notation and terminology from Section \ref{sec:stablesubgp} and refer to \cite{Hruska, Osin:RelHyp} for definitions and background on relatively hyperbolic groups. 
 The action of $G$ on $\Cay(G, S\cup\mc{P})$ is acylindrical \cite{Osin:RelHyp, OsinAH}.  Subgroups which are elliptic for this action are precisely those which are either finite or  conjugate into some peripheral subgroup, hence Corollary \ref{cor:mainell}  implies Corollary \ref{cor:elliptic}(2).


\begin{thm} \label{thm:relhypgps}
Let $H$ be an infinite-index, relatively quasi-convex subgroup of a relatively hyperbolic group $G$ such that $H\cap E(G)=\{1\}$. Let $(\mu_i)$ be a sequence of permissible probability distributions on $G$. Then a random subgroup $R$ satisfies $\langle H, R\rangle\cong H\ast R$ and $\langle H, R\rangle$ is relatively quasi-convex in $G$.
\end{thm}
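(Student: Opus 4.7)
The plan is to apply Theorem \ref{thm:main1} to the action of $G$ on $X = \Cay(G, S \cup \mc{P})$, which is non-elementary, acylindrical, and in particular partially WPD. Since $H$ is relatively quasi-convex, $\pi(H)$ is a quasi-convex subset of $X$ by the equivalence of the various definitions of relative quasi-convexity (see \cite{Hruska}). To apply Theorem \ref{thm:main1}, the only thing that needs to be produced is a loxodromic WPD element $f \in \bigcap_i \Gamma_{\mu_i}$ that is transverse to $H$.

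Theorem \ref{thm:relqc} provides a hyperbolic element $f_0 \in G$ (which acts loxodromically on $X$) satisfying $\langle f_0 \rangle \cap H^g = \{1\}$ for every $g \in G$. To promote $f_0$ to an element of $\bigcap_i \Gamma_{\mu_i}$, I would exploit the fact that each $\Gamma_{\mu_i}$ acts non-elementarily on $X$ and contains loxodromic WPD elements (by permissibility), and pass to a suitable loxodromic element of the intersection via a standard ping-pong construction, arranging that this element still has the key property that no nontrivial power is conjugate into $H$. Transversality of the resulting $f$ to $H$---that is, bounded intersection of $\alpha_f$ with $K$-neighborhoods of each coset orbit $g\pi(H)$---will be established using the finite height of $H$ guaranteed by \cite[Theorem 1.4]{HruWis}: a long subsegment of $\alpha_f$ contained in a neighborhood of $g\pi(H)$ would, after extracting matches by translates of $f$ along its axis, force infinitely many distinct conjugates of $H$ to share a common infinite-order element that is a power of a conjugate of $f$, contradicting finite height together with the conclusion of Theorem \ref{thm:relqc}.

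With $f$ in hand, Theorem \ref{thm:main1} yields that a random subgroup $R$ satisfies $\langle H, R\rangle \cong H * R$ and that $\langle H, R\rangle$ has quasi-convex orbits in $X$. The final step is to upgrade quasi-convex orbits in $\Cay(G, S \cup \mc{P})$ to full relative quasi-convexity, since the definition used in the paper additionally requires that vertices on geodesics with endpoints in $\langle H, R\rangle$ lie within a bounded $d_S$-distance of $\langle H, R\rangle$. For this, I would invoke a combination theorem for relatively quasi-convex subgroups in the spirit of Mart\'inez-Pedroza \cite{MP}: the subgroup $R$ is free and each of its generators is a hyperbolic element of $G$, so $R$ is itself relatively quasi-convex, and since $\langle H, R\rangle \cong H * R$ with no amalgamation over peripheral subgroups, such a combination theorem applies to give relative quasi-convexity of $\langle H, R\rangle$ in $G$.

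The main obstacle I expect is verifying transversality in Step 2. In the hyperbolic case the analogous implication---malnormal-type intersection with $\langle f\rangle$ trivial for all conjugates $\implies$ transverse axis---is essentially immediate from quasi-convexity, but in the relatively hyperbolic setting the geometry of $\Cay(G, S \cup \mc{P})$ near $H$ is complicated by peripheral ``shortcuts,'' and one must carefully combine the finite height theorem of Hruska--Wise with the thin-triangles geometry of $X$ and control of Gromov products along $\alpha_f$ to rule out long matches between $\alpha_f$ and cosets of $\pi(H)$.
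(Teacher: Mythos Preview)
Your overall plan---apply Theorem~\ref{thm:main1} to the action on $\Cay(G,S\cup\mc{P})$, produce a transverse loxodromic, and then upgrade quasi-convex orbits to relative quasi-convexity---matches the paper. The two substantive steps, however, are handled quite differently, and your transversality sketch has a genuine gap.

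For transversality, your proposed argument (``a long subsegment of $\alpha_f$ near $g\pi(H)$ forces, via finite height, many conjugates of $H$ to share a power of $f$'') does not go through as stated. The obstruction is exactly the one you flag: in $\Cay(G,S\cup\mc{P})$ balls are not finite, so closeness of $\alpha_f$ to $g\pi(H)$ in the $d_{S\cup\mc{P}}$-metric does not yield the properness needed to extract a power of $f$ in $H^g$, and no amount of translating along the axis or invoking finite height repairs this. The paper's Proposition~\ref{lem:transverse} resolves it by first replacing $\alpha_f$ with a quasi-axis without backtracking, then applying Osin's bounded coset penetration result \cite[Proposition~3.15]{Osin:RelHyp} to convert $d_{S\cup\mc{P}}$-closeness of phase vertices to bounded $d_S$-distance; only then does a pigeonhole argument in the locally finite metric $d_S$ produce the contradiction with $\langle f\rangle\cap H^g=\{1\}$. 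Finite height plays no role at this stage.

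For the upgrade to relative quasi-convexity, the paper does not invoke a combination theorem. Instead it uses the explicit $(8,c)$-quasi-geodesic structure of paths labeled by words in $H\cup\{w_1,\dots,w_k\}$ coming from Theorem~\ref{t:words}: given a geodesic $p$ in $\Cay(G,S\cup\mc{P})$ with endpoints in $\langle H,R\rangle$, compare it to the labeled path $q$, apply \cite[Proposition~3.15]{Osin:RelHyp} again to get each vertex of $p$ within bounded $d_S$-distance of a vertex of $q$, and then use relative quasi-convexity of $H$ and of each $\langle w_i\rangle$ on the individual segments of $q$. Your black-box approach via \cite{MP} might be made to work, but the hypotheses of those combination theorems (compatible parabolic structure, separation conditions) are not automatic from the abstract isomorphism $\langle H,R\rangle\cong H*R$; the paper's direct argument avoids this entirely and reuses machinery already in hand.
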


 Recall that an infinite order element $f\in G$ is \emph{hyperbolic} if $f$ is infinite order and is not conjugate into $P_i$ for any $i$.
Before proving Theorem \ref{thm:relhypgps}, we show that we can always find a hyperbolic element $f\in G$ which is transverse to $H$.
\begin{prop} \label{lem:transverse}
Let $G$ be a relatively hyperbolic group and $H\leq G$ an infinite-index $\sigma$--relatively quasi-convex subgroup.  Then there exists a hyperbolic element $f\in G$ such that $f$ is transverse to $H$.
\end{prop}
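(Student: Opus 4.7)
My plan is to apply Theorem~\ref{thm:relqc} to obtain a hyperbolic element $f \in G$ with $\langle f\rangle \cap H^g = \{1\}$ for every $g \in G$, and then to upgrade this algebraic condition to geometric transversality for the action of $G$ on $X := \Cay(G, S \cup \mathcal{P})$. Since $f$ is hyperbolic, it acts loxodromically on $X$ with a quasi-geodesic axis $\alpha_f$ on which $\langle f\rangle$ acts cocompactly; moreover $\langle f\rangle$ is itself relatively quasi-convex, and $\pi(H)$ is quasi-convex in $X$ by the standing hypothesis.

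I would argue by contradiction. Suppose transversality fails: there exist $K \geq 0$, a sequence $g_n \in G$, and points $p_n, q_n \in \alpha_f \cap \mathcal{N}_K(g_n\pi(H))$ with $d_X(p_n, q_n) \to \infty$. Using $\langle f\rangle$-cocompactness of $\alpha_f$, translate on the left by powers of $f$ to arrange that $p_n$ lies in a fixed bounded neighborhood of $x_0$; then $q_n$ is uniformly $X$-close to $f^{b_n}x_0$ with $b_n \to \infty$. Choosing $h_n, h_n' \in H$ witnessing the neighborhood conditions yields a uniform constant $K'$ with
\[
d_X(x_0, g_n h_n x_0) \leq K', \qquad d_X(f^{b_n} x_0, g_n h_n' x_0) \leq K'.
\]
Setting $\eta_n := g_n h_n' h_n^{-1} g_n^{-1} \in g_n H g_n^{-1}$, a short computation gives $d_X(\eta_n x_0, f^{b_n} x_0) \leq 2K'$, so $\eta_n^{-1} f^{b_n}$ lies in a fixed ball $B \subset X$ about the identity.

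To extract an algebraic contradiction, I would combine relative quasi-convexity of $\langle f\rangle$ and $H$ with the finite-width property of relatively quasi-convex subgroups (\cite[Theorem~1.4]{HruWis}) to collapse the data $(g_n H, \eta_n^{-1} f^{b_n})$ to finitely many values in the word metric $d_S$, in which balls are finite. A pigeonhole over $n$ then produces $n \neq m$ with exact equality $\eta_n^{-1}f^{b_n} = \eta_m^{-1} f^{b_m}$, giving $f^{b_n-b_m} = \eta_n \eta_m^{-1}$; a coset-matching step further forces this element into some $gHg^{-1}$, with $b_n \neq b_m$, contradicting the choice of $f$ from Theorem~\ref{thm:relqc}.

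\textbf{Main obstacle.} The principal difficulty is the pigeonhole step, because $X$-balls are infinite (the peripheral subgroups are infinite), so one cannot directly finitize the coarse equation $\eta_n^{-1}f^{b_n} \in B$. Overcoming this requires passing from the coned-off metric on $X$ to the word metric $d_S$ using relative quasi-convexity of both subgroups, together with a bounded-packing or finite-width argument from \cite{HruWis} to control the sequence $g_n$ up to finitely many cosets. A preparatory replacement of $f$ by a sufficiently high power may also be convenient, ensuring that $E(f)$ is aligned with $\langle f\rangle$ modulo a controlled finite subgroup so that the algebraic condition from Theorem~\ref{thm:relqc} applies to all relevant intersections.
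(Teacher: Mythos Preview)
Your overall plan matches the paper's: start from Theorem~\ref{thm:relqc}, argue by contradiction, translate by powers of $f$, and pigeonhole in a locally finite metric to manufacture a nontrivial power of $f$ inside some $gHg^{-1}$. You also correctly isolate the real difficulty, namely that balls in $X=\Cay(G,S\cup\mathcal{P})$ are infinite, so the coarse equation $\eta_n^{-1}f^{b_n}\in B$ is useless on its own.

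However, your proposed fix for this obstacle has a genuine gap. Invoking finite width from \cite{HruWis} does not do what you need: finite width controls how many distinct cosets $g_nH$ can have pairwise infinite conjugate intersections, but it does not produce any $d_S$--bound on the elements $\eta_n^{-1}f^{b_n}$, nor does it force the $g_n$ into finitely many cosets absent such a bound. Concretely, nothing in your argument prevents the $g_nh_n$ from ranging over an entire peripheral coset while staying $X$--close to the identity, so the ``coset-matching step'' you allude to has no input to work with.

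The paper resolves the obstacle by a different mechanism. First it replaces $f$ by a power so that $\alpha_f$ can be modified to a quasi-geodesic \emph{without backtracking} whose phase vertices are uniformly $d_S$--close to the powers $f^i$. Then it uses relative quasi-convexity of $H$ (which is a $d_S$--statement, not a $d_X$--statement) together with \cite[Proposition~3.15]{Osin:RelHyp} (bounded coset penetration for similar paths without backtracking) to conclude that the relevant phase vertices on the axis are within a uniform $d_S$--distance of $gH$. This is precisely what converts the $X$--ball into a $d_S$--ball and makes the pigeonhole legitimate. Your outline omits both the no-backtracking modification and the BCP step; the pass to a power of $f$ is needed for the former, not for aligning $E(f)$ with $\langle f\rangle$ as you suggest.
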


\begin{proof} 

By Theorem \ref{thm:relqc} there exists a hyperbolic $f\in G$ such that $\langle f\rangle\cap H^g=\{1\}$ for all $g\in G$.  Let $\alpha_f$ be a $(\lambda,c)$--quasi-geodesic axis for $f$ in $\Cay(G,S\cup\mathcal P)$, which is formed as the concatenation of geodesic segments $f^i[1,f]$.  Let $M$ be the Morse constant for $(\lambda,c)$--quasi-geodesics in $\Cay(G, S\cup\mc{P})$.  

We recall some terminology from \cite[Section~3]{Osin:RelHyp}. A \emph{$P_i$--component} of a path in $\Cay(G, S\cup\mc{P})$ is a maximal subpath in which all edges are labeled by elements of $P_i$. Two $P_i$--components of path are \emph{connected} if there is an edge labeled by an element of $P_i$ between a vertex of one component and a vertex of the other component. A $P_i$--component is  \emph{isolated} if it is not connected to any other $P_i$--component of the path.  A path $p$ in $\Cay(G,S\cup\mathcal P)$ is called a \emph{path without backtracking} if for any $i=1,\dots, n$, every $P_i$--component of $p$ is isolated.  Let $v$ be a vertex of some $P_i$--component $s$ of $p$.  If $v\neq s_-$ and $v\neq s_+$, then $v$ is an \emph{inner vertex} of $s$.  A vertex $u$ of $p$ is called \emph{non-phase} if it is an inner vertex of some $P_i$--component; it is called \emph{phase} otherwise.  Two paths $p,q$ in $\Cay(G,S\cup\mathcal P)$ are called \emph{$k$--similar} if there exists a constant $k$ such that $\max\{d_S(p_-,q_-),d_S(p_+,q_+)\}\leq k$.\\

\begin{claim}
After possibly replacing $f$ by a power, there is a $(\lambda,c)$--quasi-geodesic $\alpha_f'$ in $\Cay(G,S\cup\mathcal P)$ without backtracking and a constant $\nu$ depending on $f$ such that $d_S(f^i,\alpha_f')\leq \nu$ for every $i$.
\end{claim}

\begin{proof}[Proof of claim]
If $\alpha_f$ has no backtracking, we are done, so assume this is not the case.  Since  $f^i[1,f]$ is a geodesic for each $i$, each $P_j$--component is isolated in that geodesic.  Thus any backtracking must come from concatenating multiple geodesics. Since the geodesic $\alpha_f$ is formed by concatenating the images of $[1,f]$ under the action of $\langle f\rangle$, it suffices to consider the $P_j$--components of $[1,f]$.

The axis $\alpha_f$ is a $(\lambda,c)$--quasi-geodesic, hence there is a constant $I$ depending on $\lambda,c$ and $f$ such that no $P_j$--component of $[1,f]$ can connect to an $P_j$--component of $[f^i,f^{i+1}]$ for any $i> I$.  To see this, let $u, v$ be $P_j$--components of $[1,f]$ and $[f^i,f^{i+1}]$, respectively.  Then $d_{S\cup\mc P}(u_-,v_+)=1$, which implies that the subpath of $\alpha_f$ from $u_-$ to $v_+$ can be no longer than $\lambda+c$, providing the desired $I$.  By passing to a power of $f$ if necessary, we may assume that $I=1$.

Thus it remains to consider the connected $P_j$--components of $[1,f]\cdot [f,f^2]$.  Let $u,w$ be connected $P_j$--components of $[1,f]$ and $[f,f^2]$, respectively, such that for all $k=1,\dots, n$, no $P_k$--component of $[1,u_-]$ connects to an $P_k$--component of $[v_+,f^2]$.  Let $[u_-,v_+]$ be an edge labeled by an element of $H_j$, and let $\alpha'_f$ be the path formed from $\alpha_f$ by replacing $[f^{i-1}u_-,f^i]\cdot[f^i,f^{i-1}v_+]$ with the edge $[f^{i-1}u_-,f^{i-1}v_+]$ for each $i$.  By construction, $\alpha_f'$ is a $(\lambda,c)$--quasi-geodesic (in fact, it is a quasi-geodesic with better constants, but this is sufficient for the proof) without backtracking.  Let $\nu=\max\{d_S(f,u_-),d_S(f,v_+)\}$.  Then since $G$ acts by isometries, $d_S(f^i,\alpha'_f)\leq \nu$, completing the proof of the claim.
\end{proof}

Note that in the proof of the claim, the vertices $u_-$,$v_+$ are phase vertices by construction, and thus all vertices $f^iu_-$ and $f^iv_+$ are also phase vertices.

To show that $f$ is transverse to $H$ in $\Cay(G, \mc{P}\cup S)$, it suffices to show that for all $K$, there exists a constant $B$ depending only on $f, H,$ $K$, and the hyperbolicity constant $\delta$ of $\Cay(G,S\cup\mathcal P)$ such that $\diam(\alpha_f' \cap \mathcal N_K(gH))\leq B$ for all $g\in G$.  Suppose towards a contradiction that this is not the case.  Then there exists $K$ such that for any $B$, there exists $g\in G$ with $\diam(\alpha_f' \cap \mathcal N_K(gH))> B$. Let $x,y\in \alpha'_f$ be the first and last vertices contained in the $K$--neighborhood of $gH$, respectively, and let $x',y'\in gH$ be such that $d_{S\cup\mc P}(x',x)\leq K$ and $d_{S\cup\mc P}(y',y)\leq K$.  Let $p$ be a geodesic in $\Cay(G,S\cup\mathcal P)$ from $x'$ to $y'$.  Since $\Cay(G,S\cup\mathcal P)$ is $\delta$-hyperbolic, when $B$ is sufficiently large compared to $K$ there is a subpath $p'$ of $p$ which is contained in the $(2\delta+M)$--neighborhood of $\alpha_f'$.  Let $a',b'\in p$ be the first and last vertices of $p'$, and let $a,b\in \alpha_f'$ be the closest vertices to $a',b'$, respectively, so that $d_{S\cup\mc P}(a',a)\leq 2\delta+M$ and $d_{S\cup\mc P}(b',b)\leq 2\delta+M$.  Let $q$ be the subpath of $\alpha_f'$ from $a$ to $b$, and let $q'=[a',a]\cdot q\cdot [b,b']$.  By choosing $B$ sufficiently large, we can ensure that the length of $q$ is sufficiently long in comparison to $2\delta+M$ and thus ensure that $q'$ is a quasi-geodesic whose constants do not depend on $K$.  See Figure \ref{fig:relhyp}.

Moreover, $q'$ is the concatenation of two geodesics and a path without backtracking, and so any backtracking in $q'$ must come from the concatenation.  By choosing $B$ sufficiently large, we can ensure that no $P_i$--component of $[a,a']$ is connected to an $P_i$--component of $[b,b']$.  Further, since the geodesic $[a,a']$ is connecting $a$ to its nearest point projection onto $\alpha_f'$, no $P_i$--component of $[a,a']$ can connect to a distinct $P_i$--component of $q$, and similarly for $P_i$--components of $[b,b']$.  Therefore, $q'$ is a path without backtracking.  

 \begin{figure}
\def\svgwidth{4in}  
  \centering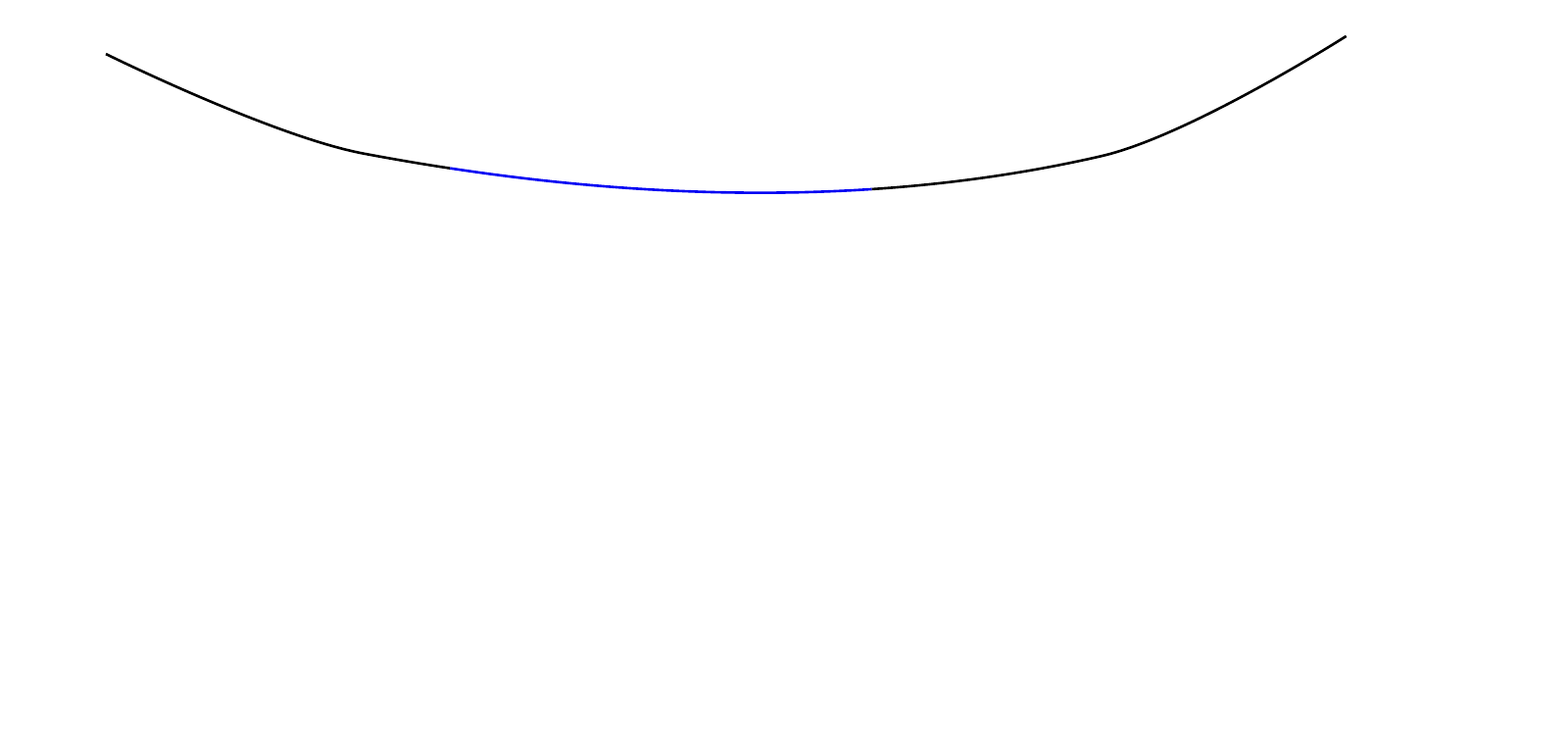 \\
	\caption{Proof of Proposition \ref{lem:transverse}.  All distances are measured with respect to $S\cup \mc P$ except for the red paths, for which the distances are measures with respect to $S$.  The concatenation of the blue paths is $q'$.} 
	\label{fig:relhyp}
\end{figure}

Since $a',b'\in p$ and $gH$ is $\sigma$--relatively quasi-convex, there are vertices $c,d\in gH$ with $d_S(c,a')\leq \sigma$ and $d_S(d,b')\leq \sigma$.  Let $\beta$ be a geodesic in $\Cay(G,S\cup\mathcal P)$ from $c$ to $d$.  Then $\beta$ and $q'$ are $\sigma$--similar paths in $\Cay(G,S\cup\mathcal P)$ without backtracking.  Note that for every vertex $u$ on $\beta$, we have $d_S(u,gH)\leq \sigma$.    Therefore, by \cite[Proposition~3.15]{Osin:RelHyp}, there is a constant $\e'=\e'(\lambda, c, \delta)$ such that for any phase vertex $w$ of $q'$, there exists a vertex $v$ of $\beta$ such that $d_S(w,v)\leq \e'$.

Since the vertices $f^iu_-$ in the proof of the claim are phase vertices for all $i$,  by again choosing $B$ large enough, there are arbitrarily many $i$ and vertices $\{gh_i\}$ in $gH$ such that $d_S(f^iu_-,gh_i)\leq \e'+\sigma$.  Since $d_S(f^iu_-,f^i)\leq \nu$, we have $d_S(f^i,gh_i)\leq \e'+\sigma+\nu$ and hence $d_S(1,f^{-i}gh_i)\leq \e'+\sigma+\nu$.  Since $\Cay(G,S)$ is a proper space, for large enough $B$ we must have some $i\neq j$ such that $f^{-i}gh_i=f^{-j}gh_j$, and consequently $f^{j-i}=gh_jh_i^{-1}g^{-1}\in H^g$, which contradicts our assumption on $f$. Therefore, $f$ is transverse to $H$.
\end{proof}

\begin{proof}[Proof of Theorem \ref{thm:relhypgps}]
 By Proposition \ref{lem:transverse}, there exists a hyperbolic element $f\in G$ which is transverse to $H$.  Thus Theorem \ref{thm:main1} implies that for any random subgroup $R$ of $G$, $\langle H,R\rangle\simeq H*R$ and $\langle H, R\rangle$ is quasi-convex in $\Cay(G,S\cup\mathcal P)$.  It remains to show that $\langle H, R\rangle$ is relatively quasi-convex.  
 
Let $R$ be generated by random walks $w_i,\dots,w_k$. We can assume that each $w_i$ is hyperbolic, and hence $\langle w_i\rangle$ is relatively quasi-convex (see, for example, \cite[Section 4.3]{Osin:RelHyp}). Fix $\sigma$ such that $H$ and each $\langle w_i\rangle $ is $\sigma$--relatively quasi-convex.
 Let  $x,y\in \langle H, R\rangle$, and let $p$ be a geodesic in $\Cay(G,S\cup\mathcal P)$ from $x$ to $y$.  Let $q$ be the path from $x$ to $y$ which is labeled by a word $W$ in $H$ and the generators of $R$. It is shown in the proof of Theorem \ref{t:words} that $q$ is a $(8, c)$--quasi-geodesic, where $c$ is independent of the given word $W$. By \cite[Proposition~3.15]{Osin:RelHyp}, there is a constant $\e'>0$ such that for any vertex $u$ in $p$, there is a vertex $v$ in $q$ such that $d_S(u,v)\leq \e'$.  
 
 If $v$ lies on a subgeodesic of $q$ labeled by some $h\in H$, then since $H$ is $\sigma$--relatively quasi-convex, we have $d_S(u,gH)\leq \e'+\sigma$ for some $g\in \langle H, R\rangle$, and hence $d_S(u, \langle H, R\rangle)\leq\e'+\sigma$. If $v$ lies on a subgeodesic of $q$ labeled by some $w_i$, then we can similarly obtain $d_S(u, \langle H, R\rangle)\leq\e'+\sigma$.  Therefore, $\langle H, R\rangle$ is relatively quasi-convex.
\end{proof}
\subsection{Mapping class groups} \label{sec:MCG}

Let $S$ be a surface of genus $g$ with $p$ punctures. $S$ is called \emph{exceptional} if $3g+p\leq 4$; otherwise $S$ is \emph{non-exceptional}. When $S$ is a non-exceptional surface, $\MCG(S)$ has a non-elementary, acylindrical action on a hyperbolic metric space called the curve graph of $S$, denoted $\mc{C}(S)$ \cite{Bow08, MasMin1}. Loxodromic elements for this action are the pseudo-Anosov elements of $MCG(S)$, and a subgroup $H\leq \MCG(S)$ is elliptic if and only if it contains no pseudo-Anosov elements by \cite[Theorem 1.1]{OsinAH}. Hence Corollary \ref{cor:mainell}  implies Corollary \ref{cor:elliptic}(3). Note that by Ivanov's Theorem \cite{Iva92}, these elliptic subgroups are precisely the subgroups which are either finite or which fix (set-wise) a finite collection of disjoint simple closed curves on $S$.  

We  now turn to the proof of Theorem \ref{thm:MCGCCC}, which will involve both Teichm{\"u}ller space $\mathcal T(S)$ of the surface $S$ with the Teichm{\"u}ller metric and the curve graph $\mathcal C(S)$.  For definitions and details about these spaces and the actions of the mapping class group, we refer the reader to \cite{Primer}.  We will use $d_{\mathcal T}$ for the Teichm{\"u}ller metric on $\mathcal T(S)$ and $d_{\mc{C}}$ for distance in $\mathcal C(S)$.

Fix a basepoint $x_0\in\mc T(S)$. A subgroup $H\leq \MCG(S)$ is \emph{convex cocompact} if there is a constant $\nu$ such that $Hx_0$ is $\nu$--quasi-convex in $\mc T(S)$ \cite{FarMos}.  Equivalently, $H$ is convex cocompact if the orbit map $H\to \mc C(S)$ is a quasi-isometric embedding by \cite{Ham05, KenLei}.  

We will need the following theorem from \cite{Rafi}.

\begin{thm}[{\cite[Theorem~8.1]{Rafi}}] \label{thm:thintriangles}
Let $x,y,z\in\mathcal T(S)$, and let $\mathcal G\colon [a,b]\to \mathcal T(S)$ be a Teichm{\"u}ller geodesic joining $x$ and $y$.  For all $\varepsilon\geq 0$, there exist constants $C,D$ such that the following holds.  Let $[c,d]$ be a subinterval of $[a,b]$ with $d-c>C$ such that for all $t\in[c,d]$, the geodesic $\mathcal G(t)$ is in the $\varepsilon$--thick part of $\mathcal T(S)$.  Then there exists $w\in [\mathcal G(c),\mathcal G(d)]$ such that $\min\{d_{\mathcal T}(w,[x,z]),d_{\mathcal T}(w,[y,z])\}\leq D$.
\end{thm}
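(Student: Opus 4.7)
The plan is to reduce the thin-triangles property for thick Teichmüller geodesics to the Masur--Minsky hyperbolicity of the curve graph $\mc{C}(S)$, transported via the systole (short-curve) projection $\Phi\colon \mc{T}(S)\to\mc{C}(S)$. The key input I would use is that a Teichmüller geodesic segment contained in the $\varepsilon$-thick part projects under $\Phi$ to an unparameterized $(\lambda,c)$-quasi-geodesic in $\mc{C}(S)$ with constants depending only on $\varepsilon$, and more generally any Teichmüller geodesic projects to an unparameterized quasi-geodesic in $\mc{C}(S)$. Consequently the sides $[x,z]$ and $[y,z]$ of the triangle, although they may dip into the thin part, still have controlled images in $\mc{C}(S)$.

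First I would push the triangle $x,y,z$ forward to $\mc{C}(S)$ and invoke hyperbolicity. Choosing the constant $C$ large (relative to the Morse constant for the quasi-geodesic constants coming from thickness) forces a central portion of $\Phi\bigl(\mathcal{G}([c,d])\bigr)$ to exist, and the $\delta$-thin-triangles property in $\mc{C}(S)$ then supplies some point $w\in\mathcal{G}([c,d])$ and a point $w'\in [x,z]\cup[y,z]$ with $d_{\mc{C}}(\Phi(w),\Phi(w'))\leq\delta'$ for a constant $\delta'$ depending only on the hyperbolicity data.

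The main obstacle will be upgrading curve-graph closeness to Teichmüller closeness. In general the projection $\Phi$ has wild fibers, since Dehn twists along a short curve move points arbitrarily in $\mc{T}(S)$ while barely moving their $\mc{C}(S)$-images. This is where the $\varepsilon$-thickness hypothesis at $w$ becomes essential. Using Rafi's distance formula
\[
d_{\mc{T}}(w,w')\asymp \sum_{Y\subsetneq S}[d_Y(w,w')]_K+\sum_{\alpha}[d_\alpha(w,w')]_K+(\text{length terms}),
\]
I would argue that any large subsurface or annular projection summand forces some curve to have small extremal length at $w$ or at $w'$; thickness of $w$ rules this out on the $w$-side, and bounded $d_{\mc{C}}(\Phi(w),\Phi(w'))$ together with Minsky's product regions picture controls the possible contribution from the $w'$-side to a constant depending only on $\varepsilon$. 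Summing, one obtains $d_{\mc{T}}(w,w')\leq D(\varepsilon)$.

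The step I expect to be most delicate is the last one, namely handling the annular and length terms when $w'$ itself lies deep in the thin part: one must see that although $w'$ can have arbitrarily complicated subsurface coordinates, only those contributions which are also registered in $\mc{C}(S)$ between $\Phi(w)$ and $\Phi(w')$ can actually be large, and this registration is bounded. This, together with making $C$ large enough so that the central point is robust under the Morse constants of the two unparameterized quasi-geodesics $\Phi([x,z])$ and $\Phi([y,z])$, determines the final values of both $C$ and $D$ in terms of $\varepsilon$ and the topological type of $S$.
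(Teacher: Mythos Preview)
The paper does not prove this statement; it is quoted verbatim as \cite[Theorem~8.1]{Rafi} and used as a black box in the proof of Lemma~\ref{lem:projcommute}. There is therefore no proof in the paper to compare your proposal against.

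That said, your outline is broadly in the spirit of Rafi's actual argument: project to $\mc{C}(S)$, use hyperbolicity there, and then upgrade curve-graph proximity to Teichm\"uller proximity via thickness and the distance formula. The delicate step you flag---controlling subsurface and annular contributions when $w'$ lies in the thin part---is real, and Rafi handles it not by a direct appeal to the distance formula as you sketch, but through his machinery of consistency, balanced times, and active intervals for subsurfaces along a Teichm\"uller geodesic. Your heuristic that ``only contributions registered in $\mc{C}(S)$ can be large'' is not quite right as stated: large annular or proper-subsurface projections between $w$ and $w'$ need not be visible in $d_{\mc{C}}(\Phi(w),\Phi(w'))$ at all, since $\Phi$ collapses exactly that information. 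What actually saves you is that $w$ is thick \emph{and} lies on the geodesic $[x,y]$, so any subsurface active near $w$ is also active along one of the other two sides by Rafi's no-backtracking/consistency results, and the corresponding projection distances are comparable. Without invoking that structure your upgrade step has a genuine gap.
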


We now show that the element $f$ provided by Corollary \ref{cor:MCG} is transverse to $H$ with respect to the action of $G$ on $\mc C(S)$. We first show that there is a uniform bound on the diameter of projections of cosets of $H$ to the axis of $f$ in $\mc{T}(S)$.  For a pseudo-Anosov $f\in G$ such that $H^g\cap\langle f\rangle=\{1\}$ for all $g\in G$. Let $\alpha_f$ be an axis of $f$ in $\mc T(S)$ and let $p_f\colon \mathcal T(S)\to\alpha_f$ be a nearest-point projection map, that is, $p_f(x)=\{y\in \alpha_f\mid d_\mc T(x,y)=d_\mc T(x,\alpha_f)\}$. 

\begin{lem}\label{lem:boundedTproj}
Let $G$ be a non-exceptional mapping class group of a surface $S$ and $H\leq G$ a convex cocompact subgroup.  There exists a pseudo-Anosov $f\in G$ and a constant $B=B(f,H)$ such that $\diam_{\mathcal T}(p_f(gH x_0))\leq B$ for all $g\in G$. 
\end{lem}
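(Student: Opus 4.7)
The plan is to argue by contradiction, combining Corollary~\ref{cor:MCG} with proper discontinuity of $\MCG(S)$ acting on $\mc T(S)$ via a pigeonhole argument, using Rafi's theorem (Theorem~\ref{thm:thintriangles}) to bridge the geometric step. First, Corollary~\ref{cor:MCG} provides a pseudo-Anosov $f\in\MCG(S)$ with $\langle f\rangle\cap H^g=\{1\}$ for every $g\in G$; let $\alpha_f$ be a Teichm\"uller axis of $f$, with translation length $\lambda>0$. Suppose for contradiction that there exists a sequence $g_n\in G$ with $\diam_\mc T(p_f(g_nHx_0))\to\infty$, and pick $x_n,y_n\in g_nHx_0$ so that $L_n:=d_\mc T(u_n,v_n)\to\infty$, where $u_n=p_f(x_n)$ and $v_n=p_f(y_n)$.

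The geometric crux is to convert large projection diameter into a long subsegment of $\alpha_f$ lying uniformly close to $g_nHx_0$. Convex cocompactness of $H$ \cite{FarMos,KenLei} implies that $Hx_0$ lies within uniform distance $\nu$ of an $H$-invariant quasi-convex subset of $\mc T(S)$ contained in the $\e$-thick part, for some $\e,\nu>0$; in particular, every Teichm\"uller geodesic with endpoints in $Hx_0$ lies in $\mc T_{\e'}(S)\cap\mc N_\nu(Hx_0)$ for uniform constants $\e',\nu$. Translating by the isometry $g_n$, the geodesic $[x_n,y_n]$ lies in $\mc T_{\e'}(S)\cap\mc N_\nu(g_nHx_0)$. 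I would then apply Rafi's theorem (Theorem~\ref{thm:thintriangles}) in the triangles $\{x_n,y_n,u_n\}$ and $\{x_n,y_n,v_n\}$, using $[x_n,y_n]$ as the $\e'$-thick reference side: the theorem produces matching points on $[x_n,y_n]$ close to the projection sides $[x_n,u_n],[y_n,v_n]$ and to the diagonals $[x_n,v_n],[y_n,u_n]$. Combined with the nearest-point property of $u_n=p_f(x_n),\ v_n=p_f(y_n)$, a quadrilateral bookkeeping argument extracts a subsegment $\sigma_n\subset\alpha_f$ of length at least $L_n-O(1)$ contained in $\mc N_K(g_nHx_0)$ for a uniform $K=K(f,H,\e',\nu)$.

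The argument finishes by pigeonhole. The segment $\sigma_n$ contains at least $N_n\gtrsim L_n/\lambda$ lattice points $f^ix_0$, each within $K':=K+d_\mc T(x_0,\alpha_f)$ of $g_nHx_0$. For each such $i$, choose $h_i\in H$ with $d_\mc T(f^ix_0,g_nh_ix_0)\leq K'$; equivalently, $d_\mc T(x_0,f^{-i}g_nh_ix_0)\leq K'$. Proper discontinuity of $\MCG(S)$ on $\mc T(S)$ bounds the cardinality $M:=\#\{k\in\MCG(S):d_\mc T(x_0,kx_0)\leq K'\}$ by a finite number. Once $N_n>M$, pigeonhole yields distinct $i,j$ with $f^{-i}g_nh_i=f^{-j}g_nh_j$, so that $f^{j-i}=g_nh_jh_i^{-1}g_n^{-1}\in g_nHg_n^{-1}=H^{g_n^{-1}}$, contradicting $\langle f\rangle\cap H^{g_n^{-1}}=\{1\}$. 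The main obstacle is the geometric step: since the pseudo-Anosov axis $\alpha_f$ need not stay in any fixed thick part of $\mc T(S)$, Rafi's theorem cannot be applied directly with $\alpha_f$ as the thick side. Instead one uses $[x_n,y_n]$ (which is thick thanks to convex cocompactness of $H$) as the reference side, and then converts the resulting matchings into information about $\alpha_f$ via the quadrilateral argument sketched above.
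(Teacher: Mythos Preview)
Your overall architecture matches the paper's: obtain $f$ from Corollary~\ref{cor:MCG}, convert large projection diameter into many lattice points $f^ix_0$ uniformly close to $g_nHx_0$, and finish by pigeonhole using properness of the $\MCG(S)$--action on $\mc T(S)$. The endgame is essentially identical to the paper's.

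The divergence, and the gap, is in the geometric step. The paper does not use Rafi's Theorem~\ref{thm:thintriangles} here at all; it uses Minsky's result \cite{Minsky} that the axis $\alpha_f$ is \emph{strongly contracting} in $\mc T(S)$. Strong contracting immediately reduces the problem to bounding $\diam\bigl(\alpha_f\cap\mc N_K(gHx_0)\bigr)$ for a fixed $K$, and then the Morse property of $\alpha_f$ (a consequence of strong contracting via \cite{ACGH}) forces the concatenation $[f^{i_1}x_0,x_1]\cdot[x_1,x_2]\cdot[x_2,f^{i_2}x_0]$ to fellow-travel $\alpha_f$, yielding the long run of lattice points near $gHx_0$. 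Your attempt to replace this with Rafi's theorem does not close. Theorem~\ref{thm:thintriangles} produces a \emph{single} point $w$ on a thick subsegment close to one of the other two sides; it does not give fellow-traveling. Applying it to the triangles $\{x_n,y_n,u_n\}$ and $\{x_n,y_n,v_n\}$ yields at most two points on $[x_n,y_n]$ close to sides of the form $[x_n,u_n]$, $[y_n,u_n]$, etc.---none of which are subsegments of $\alpha_f$. The phrase ``a quadrilateral bookkeeping argument extracts a subsegment $\sigma_n\subset\alpha_f$ of length at least $L_n-O(1)$'' is precisely the step that needs a Morse/contracting property of $\alpha_f$ and is not supplied by what you wrote.

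There is also a factual slip: the Teichm\"uller axis of a pseudo-Anosov \emph{does} lie in some fixed thick part $\mc T_\e(S)$, since it is $\langle f\rangle$--invariant and hence cobounded. The paper itself uses this in the proof of Lemma~\ref{lem:projcommute}. So your stated obstruction to applying Rafi with $\alpha_f$ as the thick side is not an obstruction. Even so, the cleanest fix is to invoke \cite{Minsky} directly, as the paper does; Rafi's theorem is the right tool for Lemma~\ref{lem:projcommute}, not for this lemma.
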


\begin{proof}

Let $\mathcal T(S)$ be the Teichm{\"u}ller space of $S$ with the Teichm{\"u}ller metric, and fix a basepoint $x_0\in \mathcal T(S)$.  The action of $G$ on $\mc T(S)$ is proper \cite{Primer}. Let $\nu$ be a constant so that $gHx_0$ is $\nu$--quasi-convex in $\mc T(S)$ for all $g\in G$.  By Corollary \ref{cor:MCG}, there exists a pseudo-Anosov $f\in G$ such that $H^g\cap\langle f\rangle=\{1\}$ for all $g\in G$. 

By \cite{Minsky}, the axis $\alpha_f$ is \emph{strongly contracting} in $\mathcal T(S)$, that is, there exist constants $B', K$ such that for all geodesics $\gamma$ at distance at least $K$ from $\alpha_f$, we have $\diam_{\mathcal T}(p_f(\gamma))\leq B'$. Thus it suffices to show that there exists a constant $B=B(f,H)$ such that for all $g\in G$ and all $x_1,x_2\in\alpha_f$ with $d_{\mathcal T}(x_i,gHx_0)\leq K$, we have $d_{\mathcal T}(x_1,x_2)\leq B$.  Suppose this is not the case.  Then for any $D$, there exists $g\in G$ and orbit points $f^{i_1}x_0,f^{i_2}x_0\in\alpha_f$ with $d_{\mathcal T}(f^{i_j}x_0, gHx_0)\leq K$ for $j=1,2$, and $d_{\mathcal T}(f^{i_1}x_0,f^{i_2}x_0)\geq D$.    Let $x_1,x_2$ be the nearest points in $gHx_0$ to $f^{i_1}x_0,f^{i_2}x_0$, respectively.  By choosing $D$ sufficiently large, we can ensure that the concatenation $\gamma=[f^{i_1}x_0,x_1]\cdot[x_1,x_2]\cdot[x_2,f^{i_2}x_0]$ is a uniform quasi-geodesic with constants depending only on the quasi-constants of $\alpha_f$.  Since $\alpha_f$ is strongly contracting, it is Morse \cite{ACGH}, and thus there is a constant $K'$ depending only on $K$ and the quasi-constants for $\alpha_f$ such that the Hausdorff distance between $\gamma$ and  $\alpha_f|_{[f^{i_1}x_0,f^{i_2}x_0]}$ is at most $K'$.  Moreover, since $gHx_0$ is $\nu$--quasi-convex, the geodesic from $x_1,x_2$ is contained in the $\nu$--neighborhood of $gHx_0$.  Therefore, for every $i_1\leq i\leq i_2$, we have $d_{\mathcal T}(f^ix_0,gHx_0)\leq K'+\nu$.  Let $gh_ix_0\in gHx_0$ be the nearest point in $gHx_0$ to $f^ix_0$, so that  $d_{\mathcal T}(x_0,f^{-i}gh_ix_0)\leq K+\nu$ for all $i$.  Since the action of $G$ on $\mathcal T(S)$ is proper and $D$ can be arbitrarily large, it follows that for some $i\neq j$, we have $f^{-i}gh_i=f^{-j}gh_j$.  Thus $f^{j-i}=gh_jh_i^{-1}g^{-1}$, which contradicts the fact that $H^g\cap\langle f\rangle=\{1\}$.
\end{proof}

There is a Lipschitz map $\phi\colon \mathcal T(S)\to\mathcal C(S)$ \cite{MasurMinsky}; we call the image of a geodesic under $\phi$ the \emph{shadow} of the geodesic.  By \cite[Lemma~3.3]{MasurMinskyII} (see also \cite[Theorem~B]{Rafi}), the shadow of any geodesic is an (unparametrized) $(a,b)$--quasi-geodesic, where $a$ and $b$ depend only on the surface $S$.  Let $\sigma$ be the Morse constant associated to $(a,b)$--quasi-geodesics in $\mc{C}(S)$.

Let $\beta_f$ be the shadow in $\mathcal C(S)$ of $\alpha_f$.  Then $\beta_f$ is an $(a,b)$--quasi-geodesic axis for $f$ in $\mathcal C(S)$. Since $\mc C(S)$ is a hyperbolic space and $\beta_f$ a quasi-geodesic, there is a coarsely well-defined nearest-point projection map from $\mathcal C(S)$ to $\beta_f$. We denote by $p_{\alpha_f}$ and $p_{\beta_f}$ the closest point projection maps to $\alpha_f$ and $\beta_f$ respectively. 

The following appears to be well-known to experts, but we include a proof for the sake of completeness.
\begin{lem}\label{lem:projcommute}
Let $G$ be a non-exceptional mapping class group of a surface $S$ and let $f$ be a pseudo-Anosov element of $G$. Then there is a constant $A$ such that for any $x\in \mathcal{T}(S)$, $d_{\mathcal{C}}(\phi(p_{\alpha_f}(x)), p_{\beta_f}(\phi(x)))\leq A$.

\end{lem}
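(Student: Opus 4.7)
Set $y := p_{\alpha_f}(x)$ and $z := p_{\beta_f}(\phi(x))$. The plan is to bound $d_{\mc{C}}(\phi(y),z)$ by testing $\phi(x)$ against the pair of points $f^{\pm N}\phi(y)\in\beta_f$ for $N$ large, and using the compatibility of Teichm\"uller geodesics with their shadows. The first step is to show that for each $N\in\mathbb{Z}$ with $|N|$ large, the Teichm\"uller geodesic $[x,f^Ny]$ passes within a uniformly bounded distance of $y$. Because $y$ is nearest to $x$ on $\alpha_f$, the concatenation $[x,y]\cdot[y,f^Ny]$ is a uniform quasi-geodesic with a coarsely right angle at $y$; since $\alpha_f$ is Morse in $\mc{T}(S)$ (Minsky's strong contracting property, used already in Lemma~\ref{lem:boundedTproj}), the Morse lemma forces $[x,f^Ny]$ to track this concatenation to within bounded Hausdorff distance, and in particular to pass within bounded distance of $y$.

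Next I would push this information forward under $\phi$. Since $\phi$ is Lipschitz and takes Teichm\"uller geodesics to unparametrized $(a,b)$--quasi-geodesics, $\phi([x,f^Ny])$ is a quasi-geodesic in $\mc{C}(S)$ from $\phi(x)$ to $f^N\phi(y)\in\beta_f$ passing within bounded distance of $\phi(y)$; by the Morse lemma in $\mc{C}(S)$, the same holds for the genuine geodesic $[\phi(x),f^N\phi(y)]$. Finally, for $|N|$ large enough that $f^N\phi(y)$ is far from $z$ along $\beta_f$, the standard hyperbolic-space property of nearest-point projections to a quasi-geodesic gives that $[\phi(x),f^N\phi(y)]$ also passes within bounded distance of $z$. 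Thus both $\phi(y)$ and $z$ track the geodesic $[\phi(x),f^N\phi(y)]$, uniformly in $N$.

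To conclude I would use the thin-triangle geometry of $\mc{C}(S)$. Since $f$ acts loxodromically on the quasi-axis $\beta_f$, for all sufficiently large $N$ the points $f^{N}\phi(y)$ and $f^{-N}\phi(y)$ lie far from $z$ on opposite sides of $\beta_f$. Form the geodesic triangle $T$ in $\mc{C}(S)$ with vertices $\phi(x)$, $f^N\phi(y)$, $f^{-N}\phi(y)$; the side opposite $\phi(x)$ is within bounded Hausdorff distance of the sub-quasi-geodesic of $\beta_f$ between its endpoints, a set that contains both $\phi(y)$ and $z$. Applying the previous paragraph to both $+N$ and $-N$, the point $\phi(y)$ lies within bounded distance of all three sides of $T$, and $z$ does as well (being essentially the coarse projection of the vertex $\phi(x)$ to the opposite side). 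In a $\delta$-hyperbolic space, two points that are simultaneously close to all three sides of a geodesic triangle lie in a uniformly bounded neighbourhood of its incentre, yielding $d_{\mc{C}}(\phi(y),z)\leq A$ for a constant $A$ depending only on $f$, $\delta$, the Lipschitz constant of $\phi$, and the quasi-constants involved. The main obstacle I expect is exactly this last step: a single value of $N$ is insufficient, since a priori $\phi(y)$ could lie arbitrarily far along $\beta_f$ on the same side of $z$ as $f^N\phi(y)$; it is the symmetric use of opposite-sign exponents that pins $\phi(y)$ to the incentre of $T$.
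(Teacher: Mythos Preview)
Your argument is essentially correct and takes a genuinely different route from the paper's.

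The paper argues by contradiction using Rafi's thin-triangle theorem for thick Teichm\"uller geodesics (stated here as Theorem~\ref{thm:thintriangles}): assuming $z_1=\phi(p_{\alpha_f}(x))$ and $z_2=p_{\beta_f}(\phi(x))$ are far apart in $\mc{C}(S)$, it finds a long thick segment $I\subset\alpha_f$ strictly between them, applies Rafi's theorem to the Teichm\"uller triangle with vertices $x,z_1,z_2$, and derives a contradiction with one of the two nearest-point properties. Your approach instead uses only Minsky's strong contraction of $\alpha_f$ (already invoked for Lemma~\ref{lem:boundedTproj}) to show $[x,f^Ny]$ passes uniformly close to $y$, then transports this to $\mc{C}(S)$ via the Lipschitz shadow map, and finishes with pure $\delta$-hyperbolic geometry on $\beta_f$. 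Your route is more elementary in that it avoids Theorem~\ref{thm:thintriangles} altogether; the paper's route is shorter once that theorem is available.

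Two points to tighten. First, in your Step~1 you assert that $[x,f^Ny]$ tracks the concatenation $[x,y]\cdot[y,f^Ny]$ to within bounded Hausdorff distance ``by the Morse lemma.'' Since $x\notin\alpha_f$, the Morse property of $\alpha_f$ does not directly give this, and in the non-hyperbolic space $\mc{T}(S)$ there is no general Morse lemma for quasi-geodesics. What you actually need---and what strong contraction does give---is the weaker statement that $[x,f^Ny]$ passes within a uniform distance of $y$ (the standard gate property of strongly contracting sets). This suffices for everything downstream. Second, in your final step you choose $N$ large enough that $f^{\pm N}\phi(y)$ lie on opposite sides of $z$ along $\beta_f$; this threshold depends on $d_{\mc{C}}(\phi(y),z)$ and hence on $x$. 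You should make explicit that all the proximity constants obtained in Steps~1--3 are uniform in $N$, so that once such an $N$ is chosen the incentre argument yields a bound independent of $N$ (and hence of $x$). With these two clarifications your proof goes through.
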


\begin{proof}
Fix $\varepsilon>0$, and let $C,D$ be the constants provided by Theorem \ref{thm:thintriangles}.   

Let $\beta_f$ be the shadow in $\mathcal C(S)$ of $\alpha_f$. We identify each point on $\alpha_f$ with its image under $\phi$ on $\beta_f$. Fix a point $y_0\in\mc C(S)$ and let $\pi\colon G\to\mc C(S)$ be the corresponding orbit map. Let $z_1=\phi(p_{\alpha_f}(x))$ and $z_2=p_{\beta_f}(\phi(x))$.

 If $d_\mc{C}(z_1, z_2)$ is sufficiently large, then there is a subpath $I$ of $\beta_f$ between $z_1$ and $z_2$ of length at least $C$ such that $d_{\mc C}(I,z_i)> 2(D+\sigma)$ for $i=1,2$.  Since $\phi\colon \mathcal T(S)\to\mathcal C(S)$ is Lipschitz, we have $d_{\mathcal T}(I,z_i)> 2(D+\sigma)$ for $i=1,2$ and the length of $I$ in $\mathcal T(S)$ is at least $C$.  Since $\alpha_f$ is contained in the $\varepsilon$--thick part of $\mathcal T(S)$, it follows from Theorem \ref{thm:thintriangles} that there is a point $w\in I$ and a point $u\in[x,z_1]\cup[x,z_2]$ with $d_{\mathcal T}(w,u)\leq D$, where here $[x,z_i]$ are geodesics in $\mathcal T(S)$.  Note that $d_\mc T(w,z_i)>2(D+\sigma)$ for $i=1,2$.   If $u\in[x,z_1]$, then by the triangle inequality, we have \[d_{\mathcal T}(u,z_1)> 2(D+\sigma)-D>D,\] which contradicts the fact that $p_{\alpha_f}(x)=z_1$.  Thus $u\in[x,z_2]$.  See Figure \ref{fig:Teichproj}.  
 
 \begin{figure}[h!]
\centering{
\begin{subfigure}{.4\textwidth}
\def\svgwidth{2.5in}  
  \centering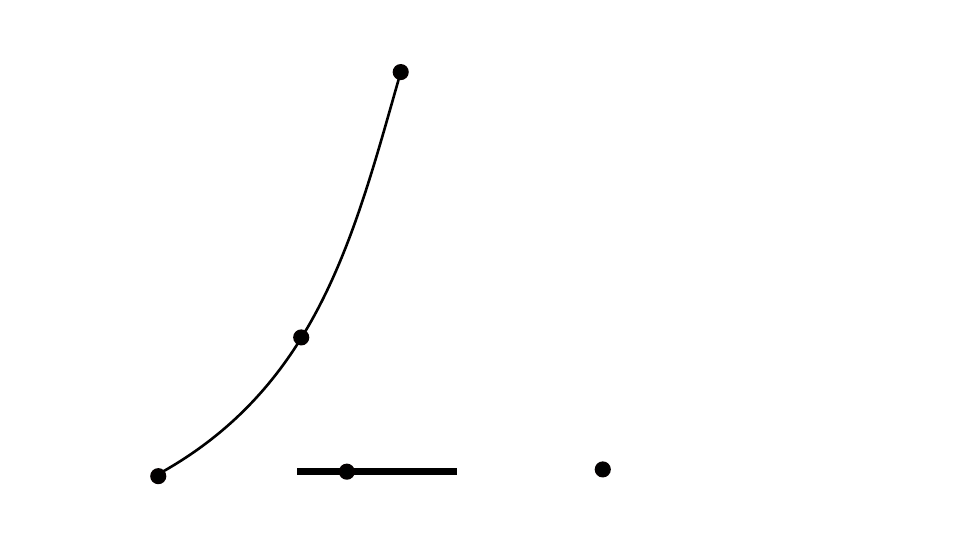 \\
	\caption{In $\mc T(S)$: $u$ is on $[x,z_2]$; the bold interval on $\alpha_f$ is $I$.} 
	\label{fig:Teichproj}
\end{subfigure} \hspace{.5in}
\begin{subfigure}{.4\textwidth}
\def\svgwidth{3in}  
  \centering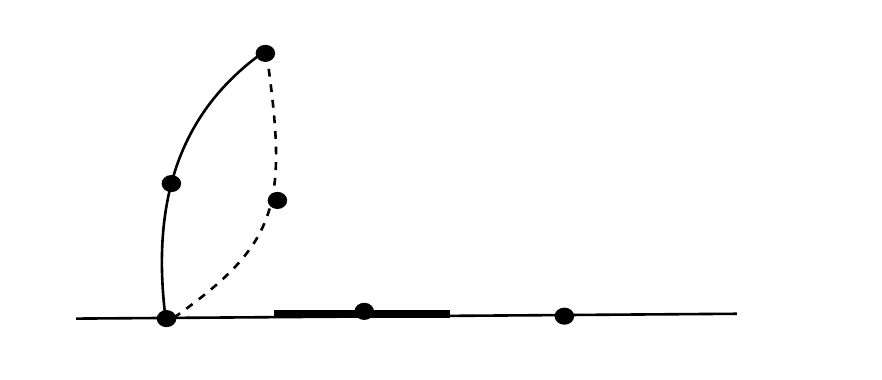 \\
	\caption{In $\mc C(S)$: the dotted path is the  shadow of $[x,z_2]$, while the bold interval on $\beta_f$ is $I$.} 
	\label{fig:CSproj}
\end{subfigure}}
\caption{The proof of Lemma \ref{lem:projcommute}.}
\end{figure}

The shadow of $[x, z_2]$ is an (unparametrized) $(a,b)$--quasi-geodesic, so there is a point $u'$ on a geodesic in $\mathcal C(S)$ from $\phi(x)$ to $z_2$ such that $d_{\mc{C}}(u,u')\leq\sigma$.   Combining this with the triangle inequality and the fact that $\phi$ is Lipschitz gives that $d_{\mc{C}}(u',\beta_f)\leq d_{\mc{C}}(u',w)\leq D+\sigma$.  However, since $d_{\mc{C}}(z_2,w)> 2(D+\sigma)$, it follows from the triangle inequality that \[d_{\mc{C}}(u',z_2)> 2(D+\sigma)-(D+\sigma)>D+\sigma,\] which contradicts the fact that $z_2$ is a nearest-point projection of $\phi(x)$ onto $\beta_f$.  See Figure \ref{fig:CSproj}.
\end{proof}

\begin{prop} \label{lem:MCGftrans}
Let $G$ be a non-exceptional mapping class group of a surface $S$ and $H\leq G$ a convex cocompact subgroup.  There exists a pseudo-Anosov $f\in G$ such that $f$ is transverse to $H$ with respect to the action of $G$ on the curve graph of $S$.
\end{prop}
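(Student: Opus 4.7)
The plan is to take the pseudo-Anosov $f \in G$ produced by Lemma \ref{lem:boundedTproj}, whose axis $\alpha_f \subset \mathcal{T}(S)$ has the property that the Teichm\"uller projection $p_f(gHx_0)$ has diameter at most $B$ for every $g \in G$. I will show that the shadow $\beta_f = \phi(\alpha_f)$ in $\mathcal{C}(S)$ serves as a $(\lambda,c)$--quasi-geodesic axis for $f$ that has uniformly bounded intersection with $K$--neighborhoods of any coset orbit $gHy_0$, where $y_0 := \phi(x_0)$ is the basepoint in $\mathcal{C}(S)$.

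First I fix a constant $K > 0$ and suppose $x_1, x_2 \in \beta_f \cap \mathcal{N}_K(gHy_0)$. By definition, there exist $h_1, h_2 \in H$ with $d_{\mathcal{C}}(x_i, \phi(gh_i x_0)) = d_{\mathcal{C}}(x_i, gh_i y_0) \le K$. Since $x_i$ lies on $\beta_f$ and nearest-point projection to a quasi-geodesic in a hyperbolic space is coarsely distance-non-increasing, there is a constant $\sigma' = \sigma'(\delta,\lambda,c)$ such that
\[
d_{\mathcal{C}}\bigl(x_i,\, p_{\beta_f}(\phi(gh_i x_0))\bigr) \le 2K + \sigma'.
\]
Next I apply Lemma \ref{lem:projcommute}, which yields
\[
d_{\mathcal{C}}\bigl(p_{\beta_f}(\phi(gh_i x_0)),\, \phi(p_{\alpha_f}(gh_i x_0))\bigr) \le A.
\]
Since $p_{\alpha_f}(gh_i x_0) \in p_f(gHx_0)$, Lemma \ref{lem:boundedTproj} gives that the two Teichm\"uller projections $p_{\alpha_f}(gh_1 x_0)$ and $p_{\alpha_f}(gh_2 x_0)$ are at $d_{\mathcal{T}}$--distance at most $B$. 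Letting $L$ be the Lipschitz constant of $\phi \colon \mathcal{T}(S) \to \mathcal{C}(S)$, we conclude
\[
d_{\mathcal{C}}\bigl(\phi(p_{\alpha_f}(gh_1 x_0)),\, \phi(p_{\alpha_f}(gh_2 x_0))\bigr) \le L\cdot B.
\]

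Chaining these three estimates via the triangle inequality yields
\[
d_{\mathcal{C}}(x_1, x_2) \le 2(2K + \sigma') + 2A + L\cdot B,
\]
which is a bound depending only on $K$, $f$, $H$, and $\delta$, and in particular independent of the coset $gH$. Therefore $\diam(\beta_f \cap \mathcal{N}_K(gHy_0))$ is uniformly bounded in $g$, which is exactly the condition in Definition \ref{def:transverse} for $f$ to be transverse to $H$ with respect to the action on $\mathcal{C}(S)$.

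The main obstacle has already been handled by the two preceding lemmas: Lemma \ref{lem:boundedTproj} provides the strong contraction/bounded projection estimate in $\mathcal{T}(S)$ (using that $\alpha_f$ is strongly contracting by Minsky and that $H$ acts with quasi-convex orbits in $\mathcal{T}(S)$), and Lemma \ref{lem:projcommute} transfers this bound from Teichm\"uller space to the curve graph by showing that the shadow map commutes coarsely with closest-point projection to $\alpha_f$ (respectively $\beta_f$). Given those two inputs, the present proposition is a direct triangle-inequality argument, with no further geometry to verify beyond choosing $y_0 = \phi(x_0)$ so that the orbit map $\pi \colon G \to \mathcal{C}(S)$ factors through $\phi$.
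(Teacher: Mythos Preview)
Your argument is correct and follows essentially the same route as the paper: take the pseudo-Anosov $f$ from Lemma~\ref{lem:boundedTproj}, use Lemma~\ref{lem:projcommute} together with the Lipschitz map $\phi$ to transfer the bounded-projection estimate from $\mathcal{T}(S)$ to $\mathcal{C}(S)$, and then relate the diameter of $\beta_f\cap\mathcal{N}_K(g\pi(H))$ to $\diam p_{\beta_f}(g\pi(H))$ via a triangle-inequality/nearest-point-projection argument. The paper's proof is simply a terser version of the same chain of estimates (and in fact omits the Lipschitz constant $L$ that you correctly include).
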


\begin{proof}

Let $f$ be the pseudo-Anosov element given by Lemma \ref{lem:boundedTproj}, and let $\alpha_f$ and $\beta_f$ be axis for $f$ in $\mc{T}(S)$ and $\mc{C}(S)$ respectively. Let $K\geq 0$, and recall that we want to find a bound on $\diam(\beta_f\cap\mc{N}_K(g\pi(H)))$ which holds for all $g\in G$.

Note that the projection of $\pi(gH)$ onto $\beta_f$ has size at least $\diam(\beta_f\cap\mc{N}_K(g\pi(H)))-4K$. Moreover, the diameter of this projection $p_{\beta_f}(\pi(gH))$ is at most $B+2A$ by Lemmas \ref{lem:projcommute} and \ref{lem:boundedTproj}.  This provides a uniform bound on $\diam(\beta_f\cap\mc{N}_K(g\pi(H)))$, completing the proof.
\end{proof}

\begin{proof}[Proof of Theorem \ref{thm:MCGCCC}]
In light of Proposition \ref{lem:MCGftrans}, this now follows directly from Theorem \ref{thm:main1}.
\end{proof}

\subsection{$\operatorname{Out}(\mathbb F_n)$}   
We consider the action of $\operatorname{Out}(\mathbb F_n)$ on the \emph{free factor graph} $\mc{FF}_n$, the graph whose vertices are conjugacy classes of free factors of $\mathbb F_n$ with edges given by inclusion.  This is an infinite diameter hyperbolic graph with a WPD action of $\operatorname{Out}(\mathbb F_n)$, and an element of $\operatorname{Out}(\mathbb F_n)$ acts loxodromically on $\mc{FF}_n$ if and only if it is fully irreducible  \cite{BesFei14}.  Any subgroup $H$ which virtually fixes a free factor of $\mathbb F_n$ up to conjugacy clearly acts elliptically on $\mc{FF}_n$, and when $H$ is finitely generated the converse is also true \cite{HanMos}. Hence Corollary \ref{cor:mainell} implies Corollary \ref{cor:elliptic}(4).   We note that when $H$ is finitely generated, it acts elliptically on $\mc{FF}_n$ if and only if it does not contain a fully irreducible element by \cite[Theorem~A]{HanMos}.

Following \cite{HamHen18} we call a subgroup $H\leq \Out(\mathbb F_n)$ \emph{convex cocompact} if $H$ is quasi-isometrically embedded in $\mc{FF}_n$ under the orbit map. An $\Out(\mathbb F_n)$--analogue of Theorem \ref{thm:MCGCCC} holds for these subgroups with essentially the same proof with the curve complex replaced by $\mc{FF}_n$ and Teichm\"uller space replaced by Culler--Vogtmann's outer space $\operatorname{CV}_n$ (see \cite{vogtmann:CVnsurvey} and references therein for the definition and details of $\operatorname{CV}_n$).  Indeed, if $H$ is quasi-isometrically embedded in $\mc{FF}_n$ under the orbit map, then $H$ is stable in $\Out(\mathbb F_n)$ by \cite{AouDurTay}. Hence by Theorem \ref{thm:stablesubgroup} there exists a fully irreducible element $f$ such that no power of $f$ is conjugate into $H$. Moreover, an orbit $Hx_0$ in $\operatorname{CV}_n$ satisfies the following property: for any $g,h\in H$, the points $gx_0,hx_0$ in $\operatorname{CV}_n$ are connected by a uniformly strongly Morse $c$--coarse geodesic which is contained in a uniformly bounded neighborhood of $Hx_0$ \cite[Theorem~3]{HamHen18}.  Using this and the fact that the axis of $f$ is strongly contracting in $\operatorname{CV}_n$ by \cite{Alg}, one can repeat the proof of Lemma \ref{lem:boundedTproj} to find a uniform bound on the diameter of projections of cosets of $H$ to the axis of $f$ in outer space. There is again a Lipschitz map from outer space to $\mc{FF}_n$, and the analogue of Lemma \ref{lem:projcommute} is given by \cite[Lemma 4.2]{DowTay}. Using these lemmas, $f$ can be shown to be transverse to $H$ in the same way as in Proposition \ref{lem:MCGftrans}. Thus we can apply Theorem \ref{thm:main1} to obtain the following.

\begin{thm}\label{thm:outfn}
Let $H$ be a convex cocompact subgroup of $\Out(\mathbb F_n)$ where $n\geq 3$.  Let $(\mu_i)$ be a sequence of permissible probability distributions on $\Out(\mathbb F_n)$. Then a random subgroup $R$ of $\Out(\mathbb F_n)$satisfies $\langle H, R\rangle\cong H\ast R$ and $\langle H, R\rangle$ is convex cocompact.
\end{thm}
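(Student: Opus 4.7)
The plan is to reduce Theorem \ref{thm:outfn} to Theorem \ref{thm:main1} applied to the action of $\Out(\mathbb F_n)$ on the free factor graph $\mc{FF}_n$. Since $H$ is convex cocompact, its orbit map $H\to\mc{FF}_n$ is a quasi-isometric embedding, so $H$ has quasi-convex orbits and is quasi-isometrically embedded in $\mc{FF}_n$, and the action of $\Out(\mathbb F_n)$ on $\mc{FF}_n$ is non-elementary and partially WPD by \cite{BesFei14}. The only nontrivial hypothesis to check is the existence of a loxodromic (equivalently, fully irreducible) WPD element $f\in\Out(\mathbb F_n)$ that is transverse to $H$. Once this is in place, Theorem \ref{thm:main1} immediately yields $\langle H,R\rangle\cong H\ast R$ together with a quasi-isometric embedding of this subgroup into $\mc{FF}_n$, which by definition gives convex cocompactness.

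To produce $f$, I would first invoke \cite{AouDurTay}, which shows that convex cocompact subgroups of $\Out(\mathbb F_n)$ are stable. Theorem \ref{thm:stablesubgroup} then produces a fully irreducible element $f$ with $\langle f\rangle\cap H^g=\{1\}$ for every $g\in\Out(\mathbb F_n)$. To upgrade this algebraic statement to geometric transversality in $\mc{FF}_n$, I would imitate the mapping class group argument (Proposition \ref{lem:MCGftrans}), first working in Culler--Vogtmann outer space $\operatorname{CV}_n$. By \cite[Theorem~3]{HamHen18}, the $H$-orbit in $\operatorname{CV}_n$ is connected up by uniformly strongly Morse coarse geodesics remaining in a uniformly bounded neighborhood of $Hx_0$, while by \cite{Alg} the axis $\alpha_f$ of $f$ in $\operatorname{CV}_n$ is strongly contracting. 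Combining these with the properness of the $\Out(\mathbb F_n)$-action on $\operatorname{CV}_n$ and the fact that no nontrivial power of $f$ is conjugate into $H$, I would run the argument of Lemma \ref{lem:boundedTproj} to obtain a uniform bound $B=B(f,H)$ on the diameter of the nearest-point projection of any translate $gHx_0$ onto $\alpha_f$.

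The remaining step is to transport this bound to $\mc{FF}_n$. For this I would use the coarsely Lipschitz projection $\operatorname{CV}_n\to\mc{FF}_n$ together with the $\Out(\mathbb F_n)$-version of Lemma \ref{lem:projcommute}, which is precisely \cite[Lemma 4.2]{DowTay}: the shadow in $\mc{FF}_n$ of the nearest-point projection of a point to $\alpha_f$ coarsely coincides with the nearest-point projection of its image to a quasi-geodesic axis $\beta_f$ of $f$ in $\mc{FF}_n$. Applied to points of $g\pi(H)$, this yields a uniform bound on the diameter of the projection of $g\pi(H)$ onto $\beta_f$, which in turn controls $\operatorname{diam}(\beta_f\cap\mc N_K(g\pi(H)))$ for every fixed $K$. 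Hence $f$ is transverse to $H$, and Theorem \ref{thm:main1} completes the argument.

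The main obstacle I expect is the $\operatorname{CV}_n$ projection estimate: although the parallel with the Teichm\"uller setting is close, outer space is only coarsely geodesic (the Lipschitz metric is non-symmetric), and the Morse-type properties available for $H$-orbits are more subtle than in $\mc{T}(S)$, so some care is needed to verify that the Minsky-type strong contraction of $\alpha_f$ from \cite{Alg} and the Hamenst\"adt--Hensel orbit-geometry from \cite{HamHen18} combine to give the nearest-point projection bound in the proof of Lemma \ref{lem:boundedTproj}. Once this geometric input is in hand, the passage through \cite[Lemma~4.2]{DowTay} and the final application of Theorem \ref{thm:main1} are formal.
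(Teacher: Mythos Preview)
Your proposal is correct and follows essentially the same route as the paper: reduce to Theorem \ref{thm:main1} on $\mc{FF}_n$, use \cite{AouDurTay} and Theorem \ref{thm:stablesubgroup} to find a fully irreducible $f$ with no power conjugate into $H$, then combine \cite{Alg}, \cite[Theorem~3]{HamHen18}, and \cite[Lemma~4.2]{DowTay} to run the arguments of Lemma \ref{lem:boundedTproj} and Proposition \ref{lem:MCGftrans} in outer space and $\mc{FF}_n$. The only caveat worth noting is the asymmetry of the Lipschitz metric on $\operatorname{CV}_n$, which you already flagged; the cited inputs are stated in a form that accommodates this, so the argument goes through.
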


\subsection{Other applications}

In this section, we finish the proof of Corollary \ref{cor:elliptic} by applying Theorem \ref{thm:main1} in various contexts. Note that parts (1)--(4) of Corollary \ref{cor:elliptic} are proved above, so it remains to prove (5)--(7).  In addition, we conjecture some further applications of Theorem \ref{thm:main1} and finally prove that the subgroup produced by applying Corollary \ref{cor:mainell} to a small subgroup of an acylindrically hyperbolic group is again small (Corollary \ref{cor:small}).

\subsubsection*{Artin groups}
We refer to \cite{KimKob14,CalvezWiest1} for definitions and notation that we use for Artin groups.
First, let $G$ be a directly indecomposable right-angled Artin group $A(\Gamma)$. We will assume that $\Gamma$ does not split as a join of two subgraphs, or equivalently that $A(\Gamma)$ does not decompose as a non-trivial direct product. In this case we consider the action of $G$ on the \emph{extension graph} $\Gamma^e$ of $G$.  Defined by Kim--Koberda, this is an infinite diameter hyperbolic graph  whose vertices are conjugates of generators of $G$ with and edge between two vertices is the corresponding conjugates of generators commute \cite{KimKob13}.  Moreover, $G$ acts acylindrically on $\Gamma^e$ \cite{KimKob14}.  

Suppose $H\leq G$ is conjugate to a subgroup of $G$ whose support is contained in a subjoin of $\Gamma$.  Then the star-length $\|h\|_*$ of any $h\in H$ is uniformly bounded by \cite[Lemma~34]{KimKob14}.  For any vertex $v\in V(\Gamma^e)$, $d_{\Gamma^e}(v,v^h)$ is coarsely equal to $\|h\|_*$, and therefore the orbit $Hv$ has bounded diameter.  It follows that $H$ is elliptic in this action. Therefore, we may apply Corollary \ref{cor:mainell}, which proves Corollary \ref{cor:elliptic} (5).

By \cite{KobManTay}, when $\Gamma$ is connected and not a join, a subgroup $H$ of $A(\Gamma)$ is stable if and only if it quasi-isometrically embeds into $\Gamma^e$ under the orbit map.  

\begin{conj}\label{conj:raags}
Let $H$ be a stable subgroup of a right-angled Artin group $A(\Gamma)$, where $\Gamma$ is connected and not a join.  Let $(\mu_i)$ be a sequence of permissible probability distributions on $A(\Gamma)$. Then a random subgroup $R$ of $A(\Gamma)$ satisfies $\langle H, R\rangle\cong H\ast R$ and $\langle H, R\rangle$ is stable in $A(\Gamma)$.
\end{conj}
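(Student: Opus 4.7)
The plan is to follow the template of Theorems \ref{thm:MCGCCC} and \ref{thm:outfn}, applying Theorem \ref{thm:main1} to the acylindrical non-elementary action of $A(\Gamma)$ on the extension graph $\Gamma^e$, using the Salvetti complex $X_\Gamma$ (the CAT(0) cube complex on which $A(\Gamma)$ acts freely, properly, and cocompactly) as the geometric analog of Teichm\"uller space or outer space. First, since $\Gamma$ is connected and not a join, $A(\Gamma)$ contains a $\mathbb Z^2$ subgroup (provided $|V(\Gamma)|\geq 2$; the other case is vacuous) and so is not hyperbolic. Consequently any stable subgroup $H\leq A(\Gamma)$ has infinite index, and Theorem \ref{thm:stablesubgroup} produces a loxodromic WPD element $f\in A(\Gamma)$ (for the action on $\Gamma^e$) such that $\langle f\rangle\cap H^g=\{1\}$ for every $g\in A(\Gamma)$.

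The main obstacle is showing that $f$ is transverse to $H$ in $\Gamma^e$, which I would argue on two levels, paralleling Lemmas \ref{lem:boundedTproj} and \ref{lem:projcommute}. Upstairs in $X_\Gamma$ I would use two facts: first, stability of $H$ implies that each orbit $gHx_0$ is uniformly Morse and quasi-convex in $X_\Gamma$; and second, after replacing $f$ by a suitable power, its axis $\alpha_f\subset X_\Gamma$ is strongly contracting. The latter should follow from a rank-one characterization of elements of $A(\Gamma)$ acting loxodromically on $\Gamma^e$, available either through the hierarchical hyperbolicity framework of Behrstock--Hagen--Sisto or directly from results of Caprace--Sageev combined with Kim--Koberda. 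The proof of Lemma \ref{lem:boundedTproj} then transports essentially verbatim: if $\diam_{X_\Gamma}(p_{\alpha_f}(gHx_0))$ were unbounded as $g$ varied, then strong contraction of $\alpha_f$, Morseness of $gHx_0$, and properness of the action of $A(\Gamma)$ on $X_\Gamma$ would combine to force $f^{j-i}\in gHg^{-1}$ for some $i\neq j$, contradicting the choice of $f$.

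Passing from $X_\Gamma$ to $\Gamma^e$ is the second half of the transversality argument. The orbit map $A(\Gamma)\to\Gamma^e$ extends to a coarsely Lipschitz, coarsely equivariant map $X_\Gamma\to\Gamma^e$ sending $\alpha_f$ to a quasi-axis $\beta_f$ for $f$ in $\Gamma^e$. An analog of Lemma \ref{lem:projcommute}---that is, compatibility of nearest-point projection to $\alpha_f$ in $X_\Gamma$ with nearest-point projection to $\beta_f$ in $\Gamma^e$, up to a uniform error---should follow from the behavior of hulls and projections in hierarchically hyperbolic spaces (or from a direct thick/thin triangle argument mirroring Theorem \ref{thm:thintriangles}). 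Combining the $X_\Gamma$-bound with this compatibility gives a uniform bound on $\diam_{\Gamma^e}(p_{\beta_f}(gHx_0))$, and hence $f$ is transverse to $H$ in $\Gamma^e$.

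With transversality in hand, Theorem \ref{thm:main1} immediately yields $\langle H, R\rangle\cong H\ast R$ with quasi-convex orbits in $\Gamma^e$. Since $H$ is stable it quasi-isometrically embeds in $\Gamma^e$ by \cite{KobManTay}, so the moreover clause of Theorem \ref{thm:main1} promotes $\langle H, R\rangle$ to a subgroup that is quasi-isometrically embedded in $\Gamma^e$. Invoking the converse direction of \cite{KobManTay}, which characterizes stability in $A(\Gamma)$ as quasi-isometric embedding into $\Gamma^e$ under the orbit map, this promotion delivers stability of $\langle H, R\rangle$ in $A(\Gamma)$. The hardest step is producing the strongly contracting axis for $f$ in $X_\Gamma$ together with the precise compatibility between projections to $\alpha_f$ in $X_\Gamma$ and projections to $\beta_f$ in $\Gamma^e$; everything else is a routine adaptation of the mapping class group and $\Out(\mathbb F_n)$ arguments.
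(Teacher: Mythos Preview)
The statement you are trying to prove is labeled a \emph{conjecture} in the paper, and the paper does not supply a proof. After stating it, the authors remark only that to resolve it ``it suffices to show that there is an element $f\in G$ which is transverse to $H$ in $\Gamma^e$,'' and that they ``expect this to also be the case when $G$ is a right-angled Artin group.'' So there is no proof in the paper to compare against; your outline is precisely an attempt to fill the gap the authors flag and leave open.

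Your strategy is the natural one and matches the template the paper uses for $\MCG(S)$ and $\Out(\mathbb F_n)$: obtain $f$ from Theorem~\ref{thm:stablesubgroup}, prove a Teichm\"uller-space-level bounded projection statement (your analogue of Lemma~\ref{lem:boundedTproj}) in the Salvetti complex, and then transfer it to $\Gamma^e$ via a projection-compatibility statement (your analogue of Lemma~\ref{lem:projcommute}). However, as written this is a sketch rather than a proof, and the gaps lie exactly where the paper's authors presumably got stuck. You assert that the axis $\alpha_f$ in $X_\Gamma$ is strongly contracting because $f$ ``should'' be rank-one, and that projection compatibility ``should follow'' from hierarchical hyperbolicity, but neither claim is substantiated. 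The rank-one claim is plausible but requires an argument: you need that an element acting loxodromically on $\Gamma^e$ has a rank-one axis in $X_\Gamma$, and you should cite or prove this precisely rather than gesture at Caprace--Sageev or Kim--Koberda. The projection-compatibility step is more delicate; in the $\MCG(S)$ case the paper relies on Rafi's thin-triangles theorem (Theorem~\ref{thm:thintriangles}), and in the $\Out(\mathbb F_n)$ case on \cite[Lemma~4.2]{DowTay}. You would need a genuine substitute for these, and ``behavior of hulls and projections in hierarchically hyperbolic spaces'' is not yet a proof. Until those two ingredients are nailed down, the argument remains at the level of the heuristic the paper already records.
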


In order to resolve Conjecture \ref{conj:raags}, it suffices to show that there is an element $f\in G$ which is transverse to $H$ in $\Gamma^e$.  For relatively hyperbolic groups and mapping class groups, the element $f$ whose existence was guaranteed by Theorem \ref{thm:stablesubgroup} could be shown to be transverse to the appropriate subgroup.  We expect this to also be the case when $G$ is a right-angled Artin group.

Next suppose $A=A(\Gamma)$ is an irreducible Artin group of spherical type such that $A\neq A_1,A_2,I_{2m}$, and let $Z(A)$ denote the center of $A$.  We will consider the \emph{additional length graph}, denoted $\mc C_{AL}(A)$, defined by Calvez--Wiest \cite{CalvezWiest1}.  They show that $\mc C_{AL}(A)$ is an infinite diameter hyperbolic graph with a partially WPD action of $A/Z(A)$ in \cite[Theorem~A]{CalvezWiest1} and \cite[Theorems~1.1\&~1.3]{CalvezWiest2}.  A subgroup of $A$ is a \emph{parabolic subgroup} if it is conjugate to $A(\Gamma')$ for some $\Gamma'\subseteq \Gamma$.  Antolin--Cumplido show in \cite[Theorem~2]{AntCum} that if $H\leq A$ normalizes a parabolic subgroup, then $H$ acts elliptically on $\mc C_{AL}(A)$. Therefore we may apply Corollary \ref{cor:mainell} to prove Corollary \ref{cor:elliptic} (6).

\subsubsection*{$3$--manifold groups}
We refer to \cite{ASW} for the definitions and notation that we use for $3$--manifolds and their fundamental groups. Let $M$ be a closed, orientable, irreducible, non-geometric $3$--manifold and $N$ a JSJ-component of $M$.  Let $G=\pi_1(M)$ and $H$ be a subgroup of $\pi_1(N)$.  We consider the action of $G$ on the Bass--Serre tree of $\pi_1(M)$ associated to the splitting of $\pi_1(M)$ defined by the JSJ-decomposition of $M$.  This is an infinite diameter tree with an acylindrical action of $\pi_1(M)$ by \cite[Lemma~2.4]{WilZal} and \cite[Lemma~5.2]{MinOsi}.  Moreover, $\pi_1(N)$ is the stabilizer of a vertex under this action, hence $H$ also fixes a vertex. Therefore we may apply Corollary \ref{cor:mainell} to get Corollary \ref{cor:elliptic}(7). This completes the proof of Corollary \ref{cor:elliptic}.

\subsubsection*{Hierarchically hyperbolic groups}

We refer to \cite{BHS1, BHS2,SisHHS} for definitions and background on hierarchically hyperbolic groups. What is relevant for us is that such groups share many properties with mapping class groups; in particular, a hierarchically hyperbolic group $G$ admits an acylindrical action on a hyperbolic metric space $X$ such that a subgroup $H\leq G$ is stable in $G$ if and only if it is quasi-isometrically embedded in $X$ under the orbit map \cite{ABD}. We conjecture that the natural analogue of Theorem \ref{thm:MCGCCC} also holds in this setting.

\begin{conj}\label{conj:hhg}
Let $G$ be a hierarchically hyperbolic group, and let $H$ be a stable subgroup of $G$.  Let $(\mu_i)$ be a sequence of permissible probability distributions on $G$. Then a random subgroup $R$ of $G$ satisfies $\langle H, R\rangle\cong H\ast R$ and $\langle H, R\rangle$ is stable in $G$.
\end{conj}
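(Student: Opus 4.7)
The plan is to mirror the proofs of Theorem \ref{thm:MCGCCC} and Theorem \ref{thm:outfn} as closely as possible. By \cite{ABD}, a hierarchically hyperbolic group $G$ admits an acylindrical (in particular, non-elementary partially WPD) action on a hyperbolic space $X$ such that a finitely generated subgroup is stable in $G$ if and only if its orbit under $\pi\colon G\to X$ quasi-isometrically embeds. In particular, our stable subgroup $H$ has quasi-convex orbits and is infinite index in $G$ (since stable proper subgroups of acylindrically hyperbolic groups are always infinite index). By Theorem \ref{thm:stablesubgroup}, there exists a loxodromic WPD element $f\in G$ with $\langle f\rangle \cap H^g=\{1\}$ for all $g\in G$. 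Once we verify that $f$ is transverse to $H$ in $X$, Theorem \ref{thm:main1} immediately yields $\langle H,R\rangle\cong H\ast R$ with quasi-convex orbits in $X$, and the ``moreover'' clause of Theorem \ref{thm:main1} combined with the \cite{ABD} characterization of stability gives that $\langle H,R\rangle$ is stable in $G$.

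Thus the entire problem reduces to establishing transversality, for which I would follow the template of Proposition \ref{lem:MCGftrans}. I would introduce an auxiliary proper $G$--space $Y$ (the natural candidate is the Cayley graph of $G$ with respect to a finite generating set, but one could also use a cubical or HHG-hull model) together with a $G$--equivariant coarsely Lipschitz map $\psi\colon Y\to X$, and then establish three properties: (a) the axis $\alpha_f$ of $f$ is strongly contracting in $Y$; (b) for every $g\in G$, orbits $gHy_0\subseteq Y$ are $\nu$--quasi-convex (or at least connected by a uniformly Morse family of coarse geodesics lying in a bounded neighborhood of the orbit), with $\nu$ independent of $g$; and (c) the nearest-point projections in $Y$ and in $X$ agree up to bounded error under $\psi$, in the precise sense of Lemma \ref{lem:projcommute}. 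Granted these three ingredients, the argument of Lemma \ref{lem:boundedTproj} applies verbatim: an unbounded projection of $gHy_0$ to $\alpha_f$ would force, via strong contraction plus quasi-convexity, arbitrarily many orbit points $f^i y_0$ at bounded $Y$--distance from $gHy_0$; properness of the $G$--action on $Y$ then produces $i\neq j$ with $f^{j-i}\in H^g$, contradicting the choice of $f$. Property (c) transfers this bound to $X$, yielding transversality as in the proof of Proposition \ref{lem:MCGftrans}.

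The main obstacle is therefore property (c), the ``commuting projections'' lemma. In the mapping class group case this is supplied by Rafi's thin-triangle theorem (Theorem \ref{thm:thintriangles}) inside Teichm\"uller space, and for $\Out(\mathbb F_n)$ it is \cite[Lemma 4.2]{DowTay}. For a general hierarchically hyperbolic group no analogous theorem is currently known at the level of generality needed, although it is closely related to the bounded geodesic image axiom of the HHG structure and to the fact that loxodromic WPD elements of the top-level hyperbolic space act as Morse elements of $G$. Property (a) should follow from the latter Morse property together with known equivalences between being Morse and being strongly contracting, while property (b) is essentially the definition of stability if one takes $Y$ to be the Cayley graph of $G$. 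Consequently, a resolution of the conjecture hinges on proving a HHG analogue of Rafi's theorem, or equivalently on showing that the coarse projection $G\to \alpha_f$ (in $G$) and the coarse projection $X\to \phi(\alpha_f)$ (in $X$) differ by a bounded amount on orbits of stable subgroups; once this is in hand, the rest of the argument is a straightforward adaptation of Sections \ref{sec:quasigeod} and \ref{sec:MCG}.
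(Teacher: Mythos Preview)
This statement is a \emph{conjecture} in the paper, not a theorem; the paper offers no proof and explicitly leaves it open. Your outline is precisely the strategy the paper has in mind---reduce to finding a transverse element and then invoke Theorem~\ref{thm:main1}, following the template of Theorem~\ref{thm:MCGCCC} and Theorem~\ref{thm:outfn}---and you have correctly isolated property~(c), the commuting-projections statement, as the principal missing ingredient for which no HHG analogue of Theorem~\ref{thm:thintriangles} or \cite[Lemma~4.2]{DowTay} is currently available.

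There is, however, a second gap you underestimate. You claim property~(a) ``should follow from the latter Morse property together with known equivalences between being Morse and being strongly contracting,'' but no such equivalence holds in general: strongly contracting implies Morse, while the converse is known to fail outside special settings (see \cite{ACGH}, which characterizes Morse via \emph{sublinear} contraction, a strictly weaker condition). In the mapping class group and $\Out(\mathbb F_n)$ cases, strong contraction of $\alpha_f$ is supplied by the deep results of \cite{Minsky} and \cite{Alg}, not by the Morse property alone. For a general HHG acting on its Cayley graph, you would need either an independent argument that axes of elements loxodromic on the top-level space are strongly contracting in $G$, or a reworking of Lemma~\ref{lem:boundedTproj} that avoids strong contraction. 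Finally, note that Theorem~\ref{thm:main1} also requires $H\cap E(G)=\{1\}$, which you do not address; this is handled by Remark~\ref{finnormsub} at the cost of replacing the free product by an amalgam, but should be mentioned.
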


Note that since right-angled Artin groups are hierarchically hyperbolic, Conjecture \ref{conj:hhg} implies Conjecture \ref{conj:raags}.

\subsubsection*{Small subgroups}

The following notion of a small subgroup of an acylindrically hyperbolic group was introduced in \cite{HO16}. 
\begin{defn}\cite[Definition 2.10]{HO16}\label{defn:small}
Let $G$ be acylindrically hyperbolic. A subgroup $H\leq G$ is called \emph{small} if $G$ has a generating set $S$ such that $H\subset S$, $\Cay(G, S)$ is hyperbolic, and the action of $G$ on $\Cay(G, S)$ is non-elementary and acylindrical.
\end{defn}

It is easy to see that $H$ is small if and only if $G$ admits a cobounded, acylindrical action on a hyperbolic metric space $X$ for which $H$ is elliptic. Note that when $G$ is acylindrically hyperbolic it will have many different cobounded, acylindrical actions on hyperbolic metric spaces \cite{ABO}. Examples of small subgroups include all subgroup of $G$ which are not acylindrically hyperbolic, for example all virtually cyclic subgroups and all finite subgroups, as well as any subgroup which is hyperbolically embedded  in $G$.


\begin{cor} \label{cor:small}
Let $G$ be an acylindrically hyperbolic group and $H$ a small subgroup of $G$. Let $(\mu_i)$ be a sequence of finitely supported probability distributions of full support. Then a random subgroup $R$ will satisfy $\langle H, R\rangle\cong H\ast R$ and $\langle H, R\rangle$ is small in $G$.
\end{cor}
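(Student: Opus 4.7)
The plan is to leverage smallness of $H$ to work inside a cobounded acylindrical action of $G$ on a hyperbolic space with $H$ elliptic, apply Theorem \ref{thm:main1} and Theorem \ref{thm:geomsep} to obtain both the free product structure and geometric separation of $\langle H, R, E(G)\rangle$, and then cone off cosets of this subgroup to produce a new cobounded acylindrical action in which $\langle H, R\rangle$ is elliptic.

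First I would use smallness of $H$ to fix a generating set $S$ of $G$ with $H \subseteq S$ such that $X := \Cay(G, S)$ is hyperbolic and the $G$-action on $X$ is non-elementary, cobounded, and acylindrical. Since each $\mu_i$ is finitely supported with full support, $\Gamma_{\mu_i} = G$, which immediately implies that $\mu_i$ is bounded, countable, reversible, non-elementary, and WPD with respect to this action; moreover $E(\mu_i)$ is then a finite subgroup normalized by all of $G$, hence equal to $E(G)$. Thus each $\mu_i$ is permissible. Because $H \subseteq S$, every element of $H$ has $S$-length at most one, so $\pi(H)$ is bounded and therefore $\sigma$-quasi-convex for some $\sigma$, and any loxodromic WPD element of $G$ (which exists by non-elementarity of the acylindrical action and lies in $\cap \Gamma_{\mu_i}$ since $\Gamma_{\mu_i} = G$) is automatically transverse to $H$.

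Assuming $H \cap E(G) = \{1\}$, or otherwise invoking Remark \ref{finnormsub}, Theorem \ref{thm:main1} then yields that a random subgroup $R$ satisfies $\langle H, R\rangle \cong H \ast R$ with quasi-convex orbits in $X$, and Theorem \ref{thm:geomsep} shows that $K := \langle H, R, E(G)\rangle$ is geometrically separated in $X$. Since $\langle H, R\rangle \leq K$ and any subgroup of a small subgroup is small via the same action, it suffices to exhibit a cobounded acylindrical action of $G$ on a hyperbolic space in which $K$ is elliptic. For this I would form the coned-off space $\hat X$ obtained from $X$ by adjoining, for each coset $gK$, a cone point joined by edges to the points of $g\pi(K)$. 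The translates $\{g\pi(K)\}_{g \in G}$ form a single $G$-orbit of uniformly quasi-convex subsets that, by definition of geometric separation, are uniformly geometrically separated; by now-standard coning-off arguments (as in \cite{Sis12,DGO}) this ensures $\hat X$ is hyperbolic and the induced $G$-action on $\hat X$ is cobounded and acylindrical, with $K$ fixing the cone point over itself and hence elliptic. Non-elementarity of the new action would follow because $K$ must have infinite index in $G$ (otherwise $X$ would itself be coned off to a bounded space, contradicting the existence of loxodromics), whence one can produce two independent infinite-order elements of $G$ whose axes in $X$ have uniformly bounded intersection with every $g\pi(K)$ and which therefore act as independent loxodromic isometries of $\hat X$.

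The main obstacle is the coning-off step. Verifying acylindricity of the $G$-action on $\hat X$ requires careful analysis of stabilizers of pairs of points of $\hat X$, at least one of which may be a cone point, and invokes the full strength of geometric separation together with acylindricity of the original action. Producing the independent loxodromic elements needed for non-elementarity of $G \curvearrowright \hat X$ is a distinct technical issue, since the stable-subgroup machinery of Section \ref{sec:stablesubgp} does not directly apply to $K$ (as $H$ need not be stable in $G$), so a separate argument in the spirit of the transverse-element constructions used for Theorem \ref{thm:main1} is required.
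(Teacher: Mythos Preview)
Your approach is genuinely different from the paper's, and the gap you identify is real rather than cosmetic. The paper does not cone off at all. Instead it invokes Proposition~\ref{p:small} (a criterion from \cite{HO16}): one fixes a loxodromic $g$, so that $E(g)$ hyperbolically embeds in $(G,S)$, passes to a generating set $T\supseteq S$ with $\Cay(G,T\cup E(g))$ hyperbolic and acylindrical, and then applies Theorem~\ref{t:words} with $X=\Cay(G,T\cup E(g))$. The $(8,c)$--quasi-geodesic estimate there immediately yields $|h|_{Z}\le (8+c)\,|h|_{T\cup E(g)}$ for $Z=H\cup\{w_1,\dots,w_k\}$, which is exactly the Lipschitz comparison Proposition~\ref{p:small} requires. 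This is a two-line application of machinery already built; no new hyperbolic space needs to be constructed and no acylindricity needs to be re-verified.

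The problem with your route is that the ``now-standard coning-off arguments'' you cite do not obviously apply. The passage from geometric separation to an acylindrical action on the cone-off in \cite{DGO, Sis12} proceeds via hyperbolic embedding, and Theorem~\ref{thm:hypembed} requires $K$ to be quasi-isometrically embedded in $X$, not merely quasi-convex. Since $H$ may be infinite yet is elliptic in $X$, the subgroup $K=\langle H,R,E(G)\rangle$ is typically \emph{not} quasi-isometrically embedded in $X$, so you cannot deduce that $K$ is hyperbolically embedded and then cite \cite{OsinAH} for the acylindrical action. Establishing acylindricity of $\hat X$ directly from quasi-convexity, geometric separation, and acylindricity of $X$ is plausible, but it is a separate lemma that is neither in the paper nor, to my knowledge, stated in the references you point to; the non-elementarity step would add further work. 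The paper's route via Proposition~\ref{p:small} sidesteps all of this by packaging the construction of the new acylindrical action into a black box whose hypothesis is precisely the quantitative output of Theorem~\ref{t:words}.
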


The first statement follows immediately from Corollary \ref{cor:mainell}; thus it remains to show that $\langle H, R\rangle$ is small in $G$. We will use a more complicated criteria for a subgroup to be small from \cite{HO16}.   

\begin{prop}[{\cite[Proposition 2.12]{HO16}}]\label{p:small}
Let $T\subset G$ and $F\leq G$ such that $F$ hyperbolically embeds in $G$ with respect to $T$.  Suppose $H$ is a subgroup of $G$ generated by a set $Z$ such that $\sup_{z\in Z} |z|_T<\infty$ and $d_Z(h_1, h_2)\leq K d_{T\cup E(g)}(h_1, h_2)$ for all $h_1, h_2\in H$. Then $H$ is small in $G$.
\end{prop}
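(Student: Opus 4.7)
The plan is to exhibit a generating set $S$ of $G$ containing $H$ such that $\Cay(G,S)$ is hyperbolic and the $G$-action on $\Cay(G,S)$ is non-elementary and acylindrical. The natural candidate is $S := T \cup H$; once this $S$ is verified to satisfy Definition \ref{defn:small}, the inclusion $H \subseteq S$ forces $H$ to be elliptic in $\Cay(G,S)$, and smallness follows immediately.

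The two hypotheses combine to give bi-Lipschitz equivalence between $(H, d_Z)$ and the restriction to $H$ of $d_{T \cup E(F)}$: the inequality $d_{T \cup E(F)}(h_1, h_2) \leq (\sup_{z \in Z}|z|_T)\cdot d_Z(h_1, h_2)$ is automatic from the first hypothesis, while the reverse is the second. In particular, each $z \in Z$ has bounded length in $T \cup E(F)$, and $H$ is undistorted with respect to the metric on $G$ coming from the hyperbolically embedded structure. The next step is to invoke the machinery of \cite{DGO} characterizing hyperbolic embedding in terms of Cayley graph geometry: because $F \h (G,T)$, the graph $\Cay(G, T \cup F)$ is hyperbolic, the $G$-action on it is acylindrical (cf.\ Theorem \ref{thm:hypembed} and the discussion after it), and distinct cosets of $F$ are geometrically separated. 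I would then compare the three Cayley graphs $\Cay(G, T \cup F)$, $\Cay(G, T \cup H \cup F)$, and $\Cay(G, S) = \Cay(G, T \cup H)$ via the identity map on $G$.

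Specifically, I would argue that adding the $H$-edges to $\Cay(G, T \cup F)$ to form $\Cay(G, T \cup H \cup F)$ is a $G$-equivariant quasi-isometry, since by the bi-Lipschitz control each $H$-edge corresponds to a uniform quasi-geodesic in $\Cay(G, T \cup F)$, so it neither collapses geometrically separated cosets of $F$ nor introduces shortcuts. Separately, $\Cay(G, T \cup H)$ quasi-isometrically embeds into $\Cay(G, T \cup H \cup F)$ by the identity, and the bi-Lipschitz hypothesis together with the hyperbolically embedded geometry lets me lift geodesic triangles from the smaller graph back and forth, transferring thinness. Hyperbolicity and acylindricity of the $G$-action then descend from $\Cay(G, T \cup F)$ to $\Cay(G, S)$, while non-elementarity is inherited because loxodromic WPD elements of $F$ (which exist by \cite[Theorem 6.11]{DGO}) remain loxodromic under a $G$-equivariant quasi-isometry. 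The main obstacle, and the step that forces the specific form of the bi-Lipschitz hypothesis, is ruling out the possibility that the new $H$-edges create shortcuts between distant cosets of $F$ that would destroy hyperbolicity; the bound $d_Z \leq K\, d_{T \cup E(F)}$ is precisely the quantitative input that prevents this collapse.
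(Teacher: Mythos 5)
The paper does not actually prove this proposition; it is imported verbatim from \cite[Proposition 2.12]{HO16}, so your attempt can only be judged on its own merits. Judged that way, it has a genuine gap at its central step. You claim that passing from $\Cay(G,T\cup F)$ to $\Cay(G,T\cup H\cup F)$ by adding the $H$--edges is a $G$--equivariant quasi-isometry because each $H$--edge ``corresponds to a uniform quasi-geodesic.'' This is exactly backwards: an $H$--edge joins $g$ to $gh$, two points whose $d_{T\cup F}$--distance is comparable to $|h|_Z$ by the bi-Lipschitz control, and this is unbounded whenever $H$ is infinite and $d_Z$ is a genuine (unbounded) word metric. A $G$--equivariant quasi-isometry would force $\sup_{h\in H}d_{T\cup F}(1,h)<\infty$, i.e.\ $H$ elliptic in $\Cay(G,T\cup F)$, which is not among the hypotheses and fails in the very application the paper makes of this proposition (there the subgroup playing the role of $H$ is $\langle H,R\rangle\supseteq R\cong\mathbb F_k$, which is quasi-isometrically embedded, hence unbounded, in $\Cay(G,T\cup E(g))$). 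What the two hypotheses actually give you is that the cosets $gH$ are uniformly quasi-isometrically embedded, hence uniformly quasi-convex, in $\Cay(G,T\sqcup F)$ --- not bounded. Adding $H$ to the generating set therefore \emph{cones off} this family of quasi-convex subsets and changes the large-scale geometry; hyperbolicity must be re-proved for the coned-off graph (a Kapovich--Rafi/Bowditch-type argument), and acylindricity and non-elementarity of the new action require separate arguments that do not follow from any quasi-isometry with the old graph. That is the real content of \cite[Proposition 2.12]{HO16}, and it is the part your outline replaces with an assertion that is false as stated.

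Two smaller problems would also need repair even if the main step were fixed. First, $S:=T\cup H$ need not generate $G$: in the definition of $F\h(G,T)$ only $T\cup F$ is required to generate, so your candidate $S$ must contain $F$ (Definition \ref{defn:small} requires $S$ to be a generating set of $G$). Second, acylindricity of the $G$--action on $\Cay(G,T\sqcup F)$ is not automatic from $F\h(G,T)$; one first enlarges $T$ to a set $Y\supseteq T$ using \cite[Theorem 5.4]{OsinAH} so that $G\curvearrowright\Cay(G,Y\sqcup F)$ is acylindrical and non-elementary, and then works with $S=Y\cup F\cup H$. Your appeal to Theorem \ref{thm:hypembed} goes in the wrong direction for this purpose.
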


\begin{proof}[Proof of Corollary \ref{cor:small}] Let $H$ be a small subgroup, $S$  a generating set for $G$ as in Definition \ref{defn:small}, and $g$  an element of $G$ which acts loxodromically on $\Cay(G, S)$. By \cite[Corollary 3.17]{H16}, $E(g)$ hyperbolically embeds in $G$ with respect to $S$. By \cite[Theorem 5.4]{OsinAH},  there is a generating set $T$ such that $S\subseteq T$, the graph $\Cay(G, T\cup E(g))$ is hyperbolic, and the action of $G$ on $\Cay(G, T\cup E(g))$ is non-elementary and acylindrical.

Note that if the image of $H$ in $G/E(G)$ is small, then $H$ will be small in $G$. Hence we can assume that $E(G)=\{1\}$.

 Let $w_1,\dots,w_k$ be random walks in $G$ with respect to $\mu_1,\dots,\mu_k$ respectively and let $R=\langle w_1,\dots,w_k\rangle$. 
Let $Z=H\cup\{w_1,\dots,w_k\}$. Since $H\subseteq S\subseteq T$, clearly $\sup_{z\in Z} |z|_{T}<\infty$. By Theorem \ref{t:words} with $X=\Cay(G, T\cup E(g))$, we have that for all $h\in \langle H, R\rangle$
\[
|h|_Z\leq8|h|_{T\cup E(g)}+c\leq (8+c)|h|_{T\cup E(g)}.
\]
Therefore, Proposition \ref{p:small} implies that $\langle H, R\rangle$ is small in $G$.
\end{proof}


\vspace{1cm}


\begin{thebibliography}{99}
\bibitem{ABO}
C. Abbott, S. Balasubramanya and D. Osin, {\sl Hyperbolic structures on groups}, Algebr. Geom. Topol., 19-4 (2019), 1747--1835.

\bibitem{ABD}
C. Abbott, J. Behrstock, M. Durham, \emph{Largest acylindrical actions and stability in hierarchically hyperbolic groups}, arXiv:1705.06219.

\bibitem{AD}
C. Abbott, F. Dahmani, \emph{Acylindrically hyperbolic groups have property $P_{naive}$}, Math. Z. {\bf 291} (2019), 555--568.

\bibitem{Alg}
Y. Algom-Kfir, \emph{Strongly contracting geodesics in outer space}, Geom. Topol. {\bf 15} (2011), no. 4, 2181--2233. 

\bibitem{AntCum}
Y. Antol\'in, M. Cumplido, \emph{Parabolic subgroups acting on the additional length graph}, arXiv:1906.06325.

\bibitem{AMST}
Y. Antolín, M. Mj, A. Sisto, S. Taylor, \emph{Intersection properties of stable subgroups and bounded cohomology}, Indiana Univ. Math. J. {\bf 68} (2019), no. 1, 179--199. 


\bibitem{AMS}
Y. Antolín, A. Minasyan, A. Sisto, \emph{Commensurating endomorphisms of acylindrically hyperbolic groups and applications}, Groups Geom. Dyn. {\bf 10} (2016), no. 4, 1149--1210.


\bibitem{AouDurTay}
T. Aougab, M. Durham, S. Taylor, \emph{Pulling back stability with applications to $Out(F_n)$ and relatively hyperbolic groups}, J. Lond. Math. Soc. (2) {\bf 96} (2017), no. 3, 565--583. 

\bibitem{Arz}
G. Arzhantseva, \emph{On quasiconvex subgroups of word hyperbolic groups}, Geom. Dedicata {\bf 87} (2001), no. 1-3, 191--208.


\bibitem{ACGH}
G. Arzhantseva, C. Cashen, D. Gruber, D. Hume, \emph{Characterizations of Morse quasi-geodesics via superlinear divergence and sublinear contraction}, Doc. Math. {\bf 22} (2017), 1193--1224. 

\bibitem{ArzMin}
G. Arzhantseva, A. Minasyan, \emph{Relatively hyperbolic groups are $C*$-simple}, J. Funct. Anal. {\bf 243} (2007), no. 1, 345--351. 

\bibitem{ASW}
M. Aschenbrenner, S. Friedl,  H. Wilton, \emph{3-manifold groups}, EMS Series of Lectures in Mathematics. European Mathematical Society (EMS), Zürich, 2015. xiv+215 pp. ISBN: 978-3-03719-154-5. 

\bibitem{BHS1}
J. Behrstock, M. Hagen, A. Sisto, \emph{Hierarchically hyperbolic spaces, I: Curve complexes for cubical groups}, Geom. Topol. {\bf 21} (2017), no. 3, 1731--1804. 
\bibitem{BHS2}
J. Behrstock, M. Hagen, A. Sisto,  \emph{Hierarchically hyperbolic spaces II: Combination theorems and the distance formula},  Pacific J. Math. {\bf 299} (2019), no. 2, 257--338

\bibitem{BesFei14}
M. Bestvina, M. Feighn, \emph{Hyperbolicity of the complex of free factors} Adv. Math. {\bf 256} (2014), 104--155. 


\bibitem{BestvinaFujiwara}
M. Bestvina, K. Fujiwara, \emph{Bounded cohomology of subgroups of mapping class groups}, Geom. \& Top. {\bf 6} (2002), 69-89.

\bibitem{Bow08}
B. Bowditch, \emph{Tight geodesics in the curve complex}, Invent. Math. {\bf 171} (2008), no. 2, 281--300. 

\bibitem{BH}
M. Bridson, A.  Haefliger, \emph{Metric spaces of non-positive curvature},
Grundlehren der Mathematischen Wissenschaften [Fundamental Principles of Mathematical Sciences], {\bf 319}. Springer-Verlag, Berlin, 1999. xxii+643 pp.

\bibitem{CalMah}
D. Calegari, J. Maher, \emph{Statistics and compression of scl}, Ergodic Theory Dynam. Systems {\bf 35} (2015), no. 1, 64--110. 

\bibitem{CalvezWiest1}
M. Calvez,  B. Wiest. \emph{Curve  graphs  and  Garside  groups}, Geom.  Dedicata, {\bf 188} (2016), no.1, 195--213.

\bibitem{CalvezWiest2}
M. Calvez,  B. Wiest \emph{Acylindrical  hyperbolicity  and  Artin-Tits  groups  of spherical type}, Geom. Dedicata, {\bf 191} (2017), no. 1, 199--215.

\bibitem{Cha}
V. V. Chaynikov, Properties of hyperbolic groups: free normal subgroups, quasiconvex subgroups and actions of maximal growth, Ph.D. Thesis, Vanderbilt University, (2012), available at http://etd.library.vanderbilt.edu/available/etd-06212012-172048/unrestricted/CHAYNIKOV.pdf.

\bibitem{CorHum}
M. Cordes, D. Hume, \emph{Stability and the Morse boundary}, J. Lond. Math. Soc. (2) {\bf 95} (2017), no. 3, 963–988.


\bibitem{DGO}
F. Dahmani, V. Guirardel, D. Osin, \emph{Hyperbolically embedded subgroups and rotating families in groups acting on hyperbolic spaces}, Mem. Amer. Math. Soc. {\bf 245} (2017), no. 1156, v+152 pp.

\bibitem{DH} 
F. Dahmani, C. Horbez, \emph{Spectral theorems for random walks on mapping class groups and $\operatorname{Out}(\mathbb F_n)$}, Int. Math. Res. Not. IMRN {\bf 9} (2018), 2693--2744. 

\bibitem{DowTay}
S. Dowdall, S. Taylor,  \emph{Hyperbolic extensions of free groups}, Geom. Topol. {\bf 22} (2018), no. 1, 517--570. 

\bibitem{DT}
M. Durham, S. Taylor, \emph{Convex cocompactness and stability in mapping class groups}, Algebr. Geom. Topol. {\bf 15} (2015), no. 5, 2839--2859. 

\bibitem{Primer}
B. Farb, D. Margalit, \emph{A primer on mapping class groups}, 
Princeton Mathematical Series, {\bf 49}. Princeton University Press, Princeton, NJ, 2012. xiv+472 pp. ISBN: 978-0-691-14794-9. 


\bibitem{FarMos}
B. Farb, L. Mosher, \emph{Convex cocompact subgroups of mapping class groups}, Geom. Topol. {\bf 6} (2002), 91--152. 

\bibitem{GouShc}
S. Gouëzel, Sébastien, V Shchur, \emph{A corrected quantitative version of the Morse lemma}, J. Funct. Anal. {\bf 277} (2019), no. 4, 1258–1268. 

\bibitem{Gro}
M. Gromov, \emph{Hyperbolic groups}, Essays in group theory, 75--263,
Math. Sci. Res. Inst. Publ., {\bf 8}, Springer, New York, 1987.

\bibitem{Ham05}
U. Hamenst{\"a}dt, \emph{Word hyperbolic extensions of surface groups}, arXiv:0807.4891v2.

\bibitem{HamHen18}
U. Hamenst{\"a}dt, S. Hensel, \emph{Stability in outer space},  Groups Geom. Dyn. {\bf 12} (2018), no. 1, 359--398. 

\bibitem{HanMos}
M. Handel, L. Mosher \emph{Subgroup decomposition in $Out(F_n)$: Introduction and Research Announcement}, arXiv:1302.2681.

\bibitem{Hruska}
C. Hruska, \emph{Relative hyperbolicity and relative quasiconvexity for countable groups} Algebr. Geom. Topol. {\bf 10} (2010), no. 3, 1807--1856. 

\bibitem{HruWis} 
C. Hruska, D. Wise, \emph{Packing subgroups in relatively hyperbolic groups}, Geom. Topol. {\bf 13} (2009), no. 4, 1945--1988. 

\bibitem{H16}
 M. Hull \emph{Small cancellation in acylindrically hyperbolic groups}, Groups Geom. Dyn. {\bf 10} (2016), no. 4, 1077--1119.

\bibitem{HO16}
M. Hull, D. Osin, \emph{Transitivity degrees of countable groups and acylindrical hyperbolicity}, Israel J. Math. {\bf 216} (2016), no. 1, 307--353. 

\bibitem{Iva92}
A. Ivanov, \emph{Subgroups of Teichmuller modular groups}, Translations of Math. Monographs,
{\bf 115}, Amer. Math. Soc. 1992.

\bibitem{KenLei}
A. Kent, C. Leininger, \emph{Shadows of mapping class groups: capturing convex cocompactness}, Geom. Funct. Anal. {\bf 18} (2008), no. 4, 1270--1325. 

\bibitem{KimKob13}
S. Kim, T. Koberda, \emph{Embedability between right-angled Artin groups}. Geom. Topol. {\bf 17} (2013), no. 1, 493--530. 

\bibitem{KimKob14}
S. Kim, T. Koberda,  \emph{The geometry of the curve graph of a right-angled Artin group}, Internat. J. Algebra Comput. {\bf 24} (2014), no. 2, 121--169. 


\bibitem{KobManTay}
T. Koberda, J. Mangahas, S. Taylor, \emph{The geometry of purely loxodromic subgroups of right-angled Artin groups} Trans. Amer. Math. Soc. {\bf 369} (2017), no. 11, 8179--8208.
 
\bibitem{ManMP}
J. Manning, E. Mart\'inez-Pedroza, \emph{Separation of relatively quasiconvex subgroups} Pacific J. Math. {\bf 244} (2010), no. 2, 309--334. 

\bibitem{MP}
E. Mart\'inez-Pedroza \emph{Combination of quasiconvex subgroups in relatively hyperbolic groups}, Thesis (Ph.D.)-The University of Oklahoma. 2008. 79 pp. ISBN: 978-0549-50182-4.

\bibitem{MasurMinsky}
H. Masur, Y. Minsky, \emph{Geometry of the complex of curves I: Hyperbolicity},  Invent. Math. {\bf 138} (1999), no. 1, 103--149.

\bibitem{MasurMinskyII}
H. Masur, Y. Minsky, \emph{Geometry of the complex of curves II: Hierarchical structure}, Geom. Funct. Anal. {\bf 10} (2000), no. 4, 902--974.

\bibitem{MS}
J. Maher, A. Sisto, \emph{Random subgroups of acylindrically hyperbolic groups and hyperbolic embeddings}, International Mathematics Research Notices, to appear, arXiv:1701.00253.

\bibitem{MT}
J. Maher, G. Tiozzo, \emph{Random walks on weakly hyperbolic groups},  J. Reine Angew. Math. {\bf 742} (2018), 187--239. 

\bibitem{MaherTiozzo18}
J. Maher, G. Tiozzo, \emph{Random walks, WPD actions, and the Cremona group}, arXiv:1807.10230.
 
\bibitem{MasMin1}
H. Masur, Y. Minsky, \emph{Geometry of the complex of curves. I. Hyperbolicity},  Invent. Math. {\bf 138} (1999), no. 1, 103--149.  
 
 \bibitem{Minasyan}
 A. Minasyan, \emph{On residualizing homomorphisms preserving quasiconvexity}, Comm. Algebra. {\bf 33} (2005), no. 7, 2423–2463.
 
\bibitem{Minasyan2}
A. Minasyan, \emph{Some properties of subsets of hyperbolic groups}, Comm. Algebra {\bf 33} (2005), no. 3, 909–935. 

\bibitem{Minasyan3} \emph{On residual properties of word hyperbolic groups}, J. of Group Theory {\bf 9} (2006), No. 5, pp. 695--714.

\bibitem{MinOsi}
A. Minasyan, D. Osin, \emph{Acylindrical hyperbolicity of groups acting on trees}.  Math. Ann. {\bf 362} (2015), no. 3--4, 1055--1105. 

 \bibitem{Minsky}
 Y. Minsky, \emph{Quasi-projections in Teichmuller space}, J. Reine Angrew. Math., {\bf 473} (1996), 121--136.
 
\bibitem{Ols93}
A.Yu. Ol'shanskii, \emph{On residualing homomorphisms and
$G$--subgroups of hyperbolic groups}, Internat. J.
Algebra Comput. {\bf 3}
(1993), 4, 365--409.

 \bibitem{Olshanskii}
 A.Yu. Ol'shanskii, \emph{Periodic quotients of hyperbolic groups}, Mat. Zbornik {\bf 182} (1991), no. 4, 543--567.
 
 \bibitem{Osin:RelHyp}
 D. Osin, \emph{Relatively hyperbolic groups: intrinsic geometry, algebraic properties, and algorithmic problems}, Mem. Amer. Math. Soc. {\bf 179} (2006), no. 843, vi+100 pp. 
 
\bibitem{OsinAH}
D. Osin, \emph{Acylindrically hyperbolic groups}, Trans. Amer. Math. Soc. {\bf 368} (2016), no. 2, 851--888.

 
 \bibitem{Rafi}
 K. Rafi, \emph{Hyperbolicity in Teichmuller space}, Geom. \& Top. {\bf 18} (2014) 3025--3053.
 
 
 \bibitem{RST}
 J. Russell, D. Spriano, H. Tran, \emph{The local-to-global property for Morse quasi-geodesics}, arXiv:1908.11292.
 
 
\bibitem{Sis12}
A. Sisto, \emph{On metric relative hyperbolicity}, arXiv:1210.8081. 

 \bibitem{Sisto}
 A. Sisto, \emph{Quasi-convexity of hyperbolically embedded subgroups}  Math. Z. {\bf 283} (2016), no. 3--4, 649--658. 
 
 \bibitem{SisHHS}
 A. Sisto, \emph{What is a hierarchically hyperbolic space?}, arXiv:1707.00053.


 \bibitem{TaylorTiozzo}
S. Taylor, G. Tiozzo, \emph{Random extensions of free groups and surface groups are hyperbolic} Int. Math. Res. Not. IMRN 2016, no. 1, 294--310. 

\bibitem{Tran}
H. C. Tran, \emph{On strongly quasiconvex subgroups}, Geom. Topol. {\bf 23} (2019), no. 3, 1173--1235.

\bibitem{vogtmann:CVnsurvey}
K. Vogtmann, \emph{On the geometry of outer space}, Bull. Amer. Math. Soc. {\bf 52} (2014), no. 1, 27--46.

\bibitem{WilZal}
 H. Wilton, P. Zalesskii, \emph{Profinite properties of graph manifolds}, Geom. Dedicata {\bf 147} (2010), 29--45.

\end{thebibliography}
\end{document}